\newtheorem{thm}{Theorem}[section]
\newtheorem{cor}[thm]{Corollary}
\newtheorem{lem}[thm]{Lemma}
\newtheorem{prop}[thm]{Proposition}
\theoremstyle{definition}
\newtheorem{defn}[thm]{Definition}
\newtheorem{example}[thm]{Example}
\theoremstyle{remark}
\newtheorem{rem}[thm]{Remark}
\numberwithin{equation}{section}
\newcommand{\abs}[1]{\left\vert#1\right\vert}
\newcommand{\eps}{\varepsilon}
\newcommand{\To}{\longrightarrow}
\newcommand{\tof}[1]{\stackrel{#1}{\longrightarrow}}
\newcommand{\A}{\mathcal{A}}
\newcommand{\Hb}{{\mbox{$H^{uf}\!\!$}}}
\newcommand{\Cb}{{\mbox{$C^{uf}\!\!$}}}
\def\Cb{{\mathbb C}}
\def\Hb{{\mathbb H}}
\def\Nb{{\mathbb N}}
\def\Rb{{\mathbb R}}
\def\Zb{{\mathbb Z}}
\def\Zb{{\mathbb Z}}
\DeclareMathOperator{\Hom}{Hom}
\newcommand{\mto}[1]{\stackrel{#1}{\rightarrow}}
\def\A{{\mathcal {A}}}
\def\Ac{{\mathcal A}}
\def\Bc{{\mathcal B}}
\def\Cc{{\mathcal C}}
\def\Dc{{\mathcal D}}
\def\Ec{{\mathcal E}}
\def\Fc{{\mathcal F}}
\def\Gc{{\mathcal G}}
\def\Hc{{\mathcal H}}
\def\Lc{{\mathcal L}}
\def\Pc{{\mathcal P}}
\def\Sc{{\mathcal S}}
\def\As{{\mathscr A}}
\def\Bs{{\mathscr B}}
\def\dbar{{\overline{\partial}}}
\DeclareMathOperator{\Ho}{Ho}
\newcommand{\kb}{\mathbb{k}}
\newcommand{\Rs}{\mathscr{R}}
\newcommand{\Gs}{\mathscr{G}}
\newcommand{\lra}{\longrightarrow}
\newcommand{\arrows}{\rightrightarrows} 
\newcommand{\Adc}{(\Adot,d_\Ac,c_\Ac)}
\newcommand{\Bdc}{(\Bdot,d_\Bc,c_\Bc)}
\newcommand{\psas}{\mathcal{P}\negthickspace_{\mathscr{A}}}
\newcommand{\psa}{\mathcal{P}\negthickspace_{\mathscr{A}}}
\newcommand{\hPb}{h\mathcal{P}\negthickspace_{\mathscr{B}}}
\newcommand{\hPa}{h\mathcal{P}\negthickspace_{\mathscr{A}}}
\newcommand{\Deli}{\mathcal{D}} 
\newcommand{\ddeli}{d^{\text{\tiny{Del}}}} 
\newcommand{\dcechdeli}{D}  
\newcommand{\Adot}{{\mathcal{A}^\bullet}}
\newcommand{\Bdot}{{\mathcal{B}^\bullet}}
\newcommand{\Edot}{E^\bullet}
\newcommand{\Xdot}{X^\bullet}
\newcommand{\dcech}{\check{\delta}} 
\newcommand{\defined}{:=}
\newcommand{\Gcech}{\check{\Gc}}  
\newcommand{\dr}{d^{\text{\tiny{DR}}}}   
\newcommand{\DP}{D_P}  
\newcommand{\incl}{\hookrightarrow}
\newcommand{\isom}{\simeq}
\newcommand{\Uone}{{U(1)}}
\newcommand{\tensA}{\negmedspace\otimes_{\negmedspace A}\negmedspace}
\newcommand{\tensBc}{\negmedspace\otimes_{\negmedspace \mathcal{B}}\negmedspace}
\newcommand{\tensAc}{\negmedspace\otimes_{\negmedspace \mathcal{A}}\negmedspace}
\newcommand{\tensB}{\negmedspace\otimes_B\negmedspace}
\newcommand{\tens}{\negmedspace\otimes\negmedspace}
\newcommand{\LA}{\mathcal{L}\!_A\!}
\newcommand{\LAdot}{\mathcal{L}\!_{\Ac^\bullet}\!}
\newcommand{\LB}{\mathcal{L}\!_B\!}
 \DeclareMathOperator{\bor}{Bor}
\DeclareMathOperator{\Mod}{Mod} \DeclareMathOperator{\Free}{Free}
\DeclareMathOperator{\forget}{Forget}
\DeclareMathOperator{\Ch}{Ch}
\DeclareMathOperator{\opp}{op}
\DeclareMathOperator{\Dol}{\mathcal{D}}
\DeclareMathOperator{\ddol}{d^{\text{\tiny{Dol}}}}
\DeclareMathOperator{\Coh}{Coh}
\DeclareMathOperator{\dlog}{dlog}
\DeclareMathOperator{\hFin}{h-Fin}
\DeclareMathOperator{\tot}{tot} 
\DeclareMathOperator{\rad}{rad} 
\begin{document}

\title{Mukai duality for gerbes with connection}

\author{Jonathan Block and Calder Daenzer}
\thanks{C.D. partially supported by NSF grant DMS-0703718.}
\address{J.B.: Department of Mathematics, University of Pennsylvania, Philadelphia, PA 19104\\
C.D.: Department of Mathematics, 970 Evans Hall, Berkeley, CA 94720-3840}%
\email{blockj@math.upenn.edu;  cdaenzer@math.berkeley.edu}

\maketitle

\begin{abstract}
We study gerbes with connection over an \'etale stack via noncommutative algebras of differential forms on a groupoid presenting the stack.
We then describe a dg-category of modules over any such algebra, which we claim represents a dg-enhancement of the derived category of coherent analytic sheaves on the gerbe in question.

This category can be used to phrase and prove Fourier-Mukai type dualities between gerbes and other noncommutative spaces.
As an application of the theory, we show that a gerbe with flat connection on a torus is dual (in a sense analogous to Fourier-Mukai duality or T-duality) to a noncommutative holomorphic dual torus.
\end{abstract}
\tableofcontents
\section{Introduction}

Two varieties $X$ and $Y$ are Mukai dual when there is a sheaf $\Pc$
on $X\times Y$ which induces an equivalence between bounded derived categories of coherent sheaves:
\[ \phi_\Pc:D^b_{\Coh}(X)\To D^b_{\Coh}(Y) \]
The functor $\phi_\Pc$ takes a sheaf on $X$, pulls it back to a sheaf on $X\times Y$, tensors with $\Pc$, then pushes forward to $Y$, all in a derived way.  The basic example of such an equivalence is that between a complex torus $T=V/\Lambda$ and its dual $T^\vee:=\Hom(\Lambda,U(1))$.

In several situations a space that one might expect to have a Mukai dual does not have one.  For example a principal torus bundle ought to have a dual obtained by dualizing the fibers of trivialized coordinate patches.  This procedure is not compatible with the global structure of the bundle, however, and the attempt to glue together a global dual object results not in a space but in a $U(1)$-gerbe over a family of dual tori (see e.g. \cite{DP},\cite{Pol},\cite{Dae}).  Furthermore, once gerbes are brought into the picture one is tempted to ask: what are the Mukai duals of a gerbe?  In general the dual to a gerbe, when it can be made sense of at all, is a noncommutative space (in the sense of noncommutative geometry).

This motivates the following questions, whose answer is the purpose of this paper:
\begin{enumerate}
\item What is the analogue of the bounded derived category for gerbes and noncommutative spaces?
\item Assuming the first question is answered, can an analogue of Mukai duality be defined for gerbes and noncommutative spaces?
\end{enumerate}

The general framework for answering these questions is the following.  First we present gerbes and noncommutative spaces with complex structure by a triple of data
\[ \As=(\Adot,d,c)=(\textrm{associative graded algebra, derivation, curving})\]
called a curved differential graded algebra (curved dga).  We then form a dg-category $\psa$ of modules over any such curved dga.  The objects of $\psa$ are modules satisfying finite projective type conditions which are a noncommutative version of being vector bundles, and they are equipped with an operator that encodes the holomorphic data.  This dg-category answers the first question in the strong sense that its homotopy category $\Ho(\psa)$ is a generalization to gerbes and noncommutative spaces of the bounded derived category of coherent sheaves.   Thus $\psa$ is a dg-enhancement of the derived category, and is a useful theoretical tool even in the case of complex manifolds.  As for the second question, Mukai duality between gerbes and noncommutative spaces is expressed as a quasi-equivalence between the associated dg-categories.

Here is the theorem from the commutative world which backs the claim that $\psa$ is a dg-enhancement of the derived category of coherent sheaves:

Let $X$ be a compact complex manifold and let $T_X^{0,1}$ denote the antiholomorphic cotangent bundle of $X$.  Then the complex structure on $X$ is encoded in the Dolbeault algebra $\Adot=\Gamma^\infty(X;\wedge^\bullet(T^{0,1}_X))$ together with its $\dbar$-operator.  The triple $\As(X):=(\Adot,\dbar,0)$ forms a unital curved dga, and in this case the associated dg-category $\Pc_{\negthickspace\As(X)}$ satisfies:
\begin{thm}
\cite{Bl1} The homotopy category of $\Pc_{\negthickspace\As(X)}$ is equivalent to the bounded derived category of coherent analytic sheaves on $X$.
\end{thm}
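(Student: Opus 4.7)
The plan is to unwind the definition of $\Pc_{\negthickspace\As(X)}$ in this special case, construct a natural functor to $D^b_{\Coh}(X)$, and then check that it induces an equivalence after passing to homotopy categories. By Serre--Swan, a finitely generated projective module over $\Ac^0 = C^\infty(X)$ corresponds to a smooth complex vector bundle, so an object of $\Pc_{\negthickspace\As(X)}$ amounts to a $\Zb$-graded $C^\infty$ vector bundle $E^\bullet$ on $X$ equipped with a total-degree-one operator $\Eb$ on $E^\bullet \otimes_{\Ac^0} \Adot$ satisfying the $\dbar$-Leibniz rule and $\Eb^2 = 0$ (the curving being zero). This is precisely a flat $\dbar$-superconnection in the sense of Bismut--Quillen. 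Decomposing $\Eb = \Eb_0 + \Eb_1 + \Eb_2 + \cdots$ by antiholomorphic form degree, $\Eb_0$ is $C^\infty$-linear, so $(E^\bullet,\Eb_0)$ is a complex of smooth bundles; $\Eb_1$ is a $\dbar$-connection on each $E^p$; and the higher pieces are tensorial perturbations constrained by $\Eb^2=0$.

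Next I would construct a functor $\Psi:\Ho(\Pc_{\negthickspace\As(X)})\to D^b_{\Coh}(X)$ by sending $(E^\bullet,\Eb)$ to the object represented by the Dolbeault-type complex $(E^\bullet\otimes\Adot,\Eb)$: its sheafification is a complex of fine sheaves, and the standard Dolbeault argument (together with a spectral sequence in form degree) identifies its cohomology sheaves with coherent $\Oc_X$-modules, so the complex defines an object of $D^b_{\Coh}(X)$. Full faithfulness is then checked by interpreting the $\Hom$-complex of $\Pc_{\negthickspace\As(X)}$ between $(E^\bullet,\Eb)$ and $(F^\bullet,\Fb)$ as the global sections of the induced $\dbar$-superconnection on the bundle $\Hom(E^\bullet,F^\bullet)$, and using the Dolbeault theorem with a spectral sequence argument to identify its cohomology with the corresponding $\op{Ext}$-groups in $D^b_{\Coh}(X)$.

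The main obstacle, and the technical heart of the theorem, is essential surjectivity: on an arbitrary compact complex manifold a coherent analytic sheaf need not admit a global resolution by holomorphic vector bundles, so one cannot merely represent a complex in $D^b_{\Coh}(X)$ by a complex of holomorphic bundles. The remedy is a twisting-cochain construction in the spirit of Toledo--Tong: choose an open cover on which a given complex of coherent sheaves has local free resolutions, and assemble the transition data together with all its higher coherence failures into a single total-degree-one operator $\Eb$ on a graded $C^\infty$ vector bundle. The existence of a choice with $\Eb^2=0$, and its uniqueness up to homotopy in $\Pc_{\negthickspace\As(X)}$, is established by an obstruction-theoretic induction exploiting the acyclicity of the $\dbar$-complex on polydisks; this is the step that genuinely requires the flexibility of $\dbar$-superconnections rather than ordinary holomorphic complexes.
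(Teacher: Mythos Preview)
The paper does not actually prove this theorem: it is stated with the citation \cite{Bl1} and no argument whatsoever is given in the present paper. The result is imported as a black box from Block's earlier work, so there is no ``paper's own proof'' against which to compare your proposal.

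That said, your outline is a faithful summary of the strategy in \cite{Bl1}. The identification of objects of $\psas$ with flat $\dbar$-superconnections on bounded graded smooth vector bundles, the Dolbeault-type functor to $D^b_{\Coh}(X)$, the full-faithfulness via Dolbeault resolution of the Hom bundle, and especially the essential surjectivity via Toledo--Tong twisting cochains assembled from local free resolutions --- all of this matches the architecture of Block's argument. Your identification of the key technical point is also correct: on a general compact complex manifold global locally free resolutions need not exist, and the passage through twisting cochains (with the attendant obstruction-theoretic induction) is precisely what the superconnection framework is designed to absorb.
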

Thus in this case objects of $\psa$ correspond to quasi-coherent complexes of analytic sheaves, and the morphisms between any two such objects form a complex that computes Ext groups.

The strategy of encoding geometric structures in a curved dga and phrasing duality in terms of quasi-equivalences between associated dg-categories is developed in \cite{Bl1}.  The curved dga of a holomorphic noncommutative torus is described in \cite{Bl2}.  Thus the general framework is understood, and we have commutative examples (the Dolbeault algebra of a complex manifold) as well as noncommutative examples (the holomorphic noncommutative torus).

The remaining work which we complete here is first to define the curved dga associated to a gerbe, and second, after noting that the resulting curved dga is nonunital,  to make the necessary modifications to $\psa$ for the nonunital case.  The construction of the curved dga involves a choice of a connection on the gerbe, but we show that the curved dga is independent (in an appropriate sense) of this choice.

As an application of the theory we will be able to complete the proof begun in \cite{Bl2} that a holomorphic noncommutative torus is Mukai dual to a gerbe on a dual torus.  This fact is not unexpected; its analogue in formal complex geometry has already been proved \cite{BBP}.

Before proceeding with general constructions, let us give the motivating example for our construction of the curved dga associated to a gerbe on a manifold.  This example will also show how connections on gerbes come into play.

According to Giraud \cite{Gir}, the $U(1)$-gerbes on a paracompact space $X$ are classified by degree two \v{C}ech cohomology with values in the sheaf of $U(1)$-valued functions, $H^2(X;U(1))$.  According to Dixmier and Douady \cite{DD}, $H^2(X;U(1))$ parameterizes the Morita equivalence classes of continuous trace $C^*$-algebras with spectrum $X$.  A groupoid called the \v{C}ech groupoid mediates between these two classifications.  Let us see how this works.  Let $\{U_i\}$ be a cover of $X$.  The \v{C}ech groupoid of the cover $\{U_i\}$ is defined as follows:
\[
\Gcech=(\Gcech_1\arrows\Gcech_0)\defined(\coprod_{\langle i,j\rangle}U_{ij}\arrows\coprod_iU_i)  \]
where the source and target maps are the inclusions
\[\begin{cases}
&s:U_{ij}\incl U_j\\
&t:U_{ij}\incl U_i.
\end{cases}\]
Note that the space $\check{\Gc}_2$ of composable arrows is just $\coprod U_{ijk}$.  A degree two cohomology class on $X$ can be presented as a \v{C}ech 2-cocycle $\sigma:\coprod U_{ijk}\to U(1)$ for some cover $\{U_i\}$ of $X$, which is the same as a groupoid 2-cocycle $\sigma:\check{\Gc}_2\to U(1)$ (see Section \ref{S:twistedDR}).  This data determines a groupoid extension $U(1)\rtimes^\sigma\check{\Gc}$ analogous to the group extension determined by a group 2-cocycle, and this groupoid is a presentation of the gerbe $\Gs(\sigma)$ associated to $[\sigma]\in H^2(X;U(1))$ in Giraud's program.  We mean presentation in the sense that $\Gs(\sigma)$ is equivalent to the stack of $(U(1)\rtimes^\sigma\check{\Gc})$-torsors.

On the other hand, let $C_c(\check{\Gc})$ denote the compactly supported complex valued functions on the space $\check{\Gc}_1=\coprod U_{ij}$.  Then $\sigma$ determines an associative but noncommutative multiplication.
\begin{equation}\label{E:IntroMult}
a*b(x,i,k):=\sum_j a(x,i,j)b(x,j,k)\sigma(x,i,j,k) \end{equation} where $a,b\in C_c(\Gcech)$ and $(x,i_1,\dots,i_n)$ denotes $x\in U_{i_1,\dots,i_n}\subset\Gcech_n$.
The resulting algebra, called a twisted groupoid algebra, has an involution and a norm for which it can be completed to a $C^*$-algebra \cite{Ren}.  This $C^*$-algebra is a continuous trace algebras with spectrum $X$.  The cohomology class $[\sigma]\in H^2(X;\Uone)$ is recovered as the Dixmier-Douady invariant of this $C^*$-algebra, and is a complete invariant of its Morita equivalence class.  Thus this algebra can be treated as a presentation of a gerbe.

In this paper we are describing gerbe theoretic phenomena, but it will be necessary to work with algebras.  So we will use twisted groupoid algebras and think of them as presentations of gerbes.  This means, in particular, that stacks will not appear explicitly.  Instead, constructions will be at the presentation level and then shown to be invariant under Morita equivalence.

Now suppose $X$ is not just a paracompact space, but is a smooth complex manifold, and suppose further that the 2-cocycle $\sigma$ is a smooth function.  Then to describe complex structure on the associated gerbe over $X$ we combine the ideas of the twisted groupoid algebra (which presents a $U(1)$-gerbe) and the Dolbeault algebra with $\dbar$-operator (which encodes complex structure).  We call the resulting graded algebra a twisted Dolbeault algebra on $X$.
\[ \Adot:=\Gamma_c^\infty(\Gcech,\sigma; t^*(\wedge^\bullet T_{\Gcech_0}^{0,1})). \]
This is the vector space of smooth compactly supported sections over $\check{\Gc}_1$ of the pullback via the target map of the bundle $\wedge^\bullet T_{\Gcech_0}^{0,1}$.  The multiplication is twisted as in Equation \eqref{E:IntroMult} except with wedge product in the fibers.

There is a problem, however.  The $\dbar$-operator from the complex structure on the manifold $\check{\Gc_1}$ is not derivational with respect to the twisted multiplication.  But after a possible refinement of covers it is possible to find a $(0,1)$-form $\theta$ on the manifold $\check{\Gc}_1=\coprod U_{ij}$ such that $da(x,i,j):=\dbar a(x,i,j)+\theta(x,i,j)\wedge a(x,i,j)$ \emph{is} a derivation for the twisted multiplication.
In general $d$ has a curving.  That is, there is a $(0,2)$-form $B$ on $\check{\Gc}_1$ called the curving of the connection, such that $d^2 a=[B,a]$.  The choice of $\theta$ and $B$ which restore the derivation and describes its curving is interpreted as a choice of connection on the gerbe.

This data $(\Adot,d,B)$ of a twisted Dolbeault algebra $\Adot$, a derivation $d=\dbar+\theta$, and its curving $B$, is the curved dga associated to a gerbe with $\dbar$-connection on a complex manifold $X$.  It is this type of object (and generalizations thereof) whose derived category we will describe, and whose Mukai partners we would like to discover.

Here is an outline of the paper:

In Section \ref{S:twistedDR} we formalize the notion of the twisted Dolbeault algebra associated to a gerbe with $\dbar$-connection on a complex manifold.  As was remarked above, this is based on an algebra of functions on the \v{C}ech groupoid of a cover of the manifold.   The construction works exactly the same for arbitrary complex \'etale groupoids, so we develop the theory at that level of generality.  In particular the construction is valid for gerbes with connection on orbifolds.  We also present a smooth version of this, which we call the twisted de Rham algebra.

Next we want to describe the derived category of coherent sheaves on a gerbe with $\dbar$-connection, or rather its dg-enhancement $\psa$.  There is an unavoidable problem, however:  the dg-category of modules $\psa$ over a curved dga was defined in \cite{Bl1} only for unital algebras, whereas groupoid algebras are nonunital (and adjoining a unit is a bad idea for several reasons).  The difficulties this presents are in fact rather severe.  They are resolved in Sections \ref{S:bornoalg}-\ref{S:bornoprops}, which comprise the technical heart of the paper.  The first three of these sections describe the appropriate fixes for a general nonunital curved dga and its associated modules.  Section \ref{S:bornoprops} shows that the twisted Dolbeault dga's (and some other groupoid algebras) fit within the class of nonunital algebras for which the fix works.

In Section \ref{S:threelevels} we address a hierarchy of equivalences between curved dga's as they apply to twisted Dolbeault and de Rham algebras.  The strongest equivalence is an isomorphism corresponding to gauge transformation of the connection.  The second level equivalence is a Morita equivalence that is induced by a Morita equivalence between groupoids presenting a gerbe with connection; this one implies that the twisted Dolbeault algebra over a groupoid $\Gc$ is a presentation of a (stack theoretic) gerbe with connective structure as in \cite{Br}.  The third and weakest form of equivalence between two curved dga's is a quasi-equivalence between the associated dg-categories.  This is taken as the definition of duality for curved dga's, and when applied to a twisted Dolbeault dga it should be interpreted as the noncommutative analogue of Mukai duality.

In Section \ref{S:torus} we apply the theory to prove that a gerbe with flat $\dbar$-connection on a torus is dual to a holomorphic noncommutative dual torus.  More precisely, we show that given any gerbe with flat $\dbar$-connection on a torus there is an associated holomorphic noncommutative dual torus, and that there are curved dga's corresponding to both objects, and the associated dg-categories are quasi-equivalent.\newline

\noindent\textbf{Acknowledgments.}\ C. Daenzer would like to thank Oren Ben-Bassat, Peter Dalakov, Ralf Meyer, Marc Rieffel, and Jim Stasheff for help and discussion relating to this project.  We would also like to thank the referee for numerous insightful comments.


\section{The curved dga corresponding to a gerbe with connection }\label{S:twistedDR}
The twisted Dolbeault algebra approach for presenting a gerbe with connection that was outlined in the introduction can be generalized to present a gerbe with connection on a complex \'etale groupoid.  The input data is a complex \'etale groupoid and a certain 2-cocycle in groupoid hypercohomology.  The output is a twisted version of the Dolbeault algebra with modified $\dbar$-operator.  There is also a smooth version of this, which gives a twisted exterior algebra of differential forms and a modified de Rham operator.  In this section we describe these algebras in detail and show that they can be endowed with the structure of a curved dga.

\subsection{\v{C}ech cohomology of an \'etale groupoid}

A \textbf{smooth \'etale groupoid} is a (small) groupoid $\Gc=(\Gc_1\arrows\Gc_0)$ whose sets of objects and arrows are both smooth manifolds, and all of whose structure maps are local diffeomorphisms.  All \'etale groupoids considered here will be smooth.  The $n$-tuples of composable pairs in $\Gc$ will be denoted $\Gc_n$, though by abuse of notation $\Gc$ will mean both the groupoid and the manifold $\Gc_1$ of arrows.  The source and target maps are denoted by $s$ and $t$, and their extensions to $\Gc_n$ are
\[ s_n,t_n:\Gc_n\To\Gc_0;\quad s_n(g_1,\dots,g_n):=s(g_n);\quad t_n(g_1,\dots,g_n):=t(g_1). \]

Let $(\Gc\arrows\Gc_0)$ be an \'etale groupoid.  A \textbf{(left) $\Gc$-sheaf} is a sheaf $F$ of abelian groups on $\Gc_0$ (viewed as an \'etale space $F\stackrel{\eps}{\to}\Gc_0$) together with a map $\Gc\times_{s,\eps}F\to F$ satisfying conditions analogous to a group action, that is, such that the two obvious ways to get from $\Gc_2\times_{s,\eps}F$ to $F$ agree.  The sequence of abelian groups \\
$C^n(\Gc;F):=
\begin{cases}
\{f\in\Gamma(\Gc_n;t_n^*F)\:|\:f(g_1,\dots,g_n)=0
\text{ if } g_i\in\Gc_0 \text{ for any i }\}&n>0 \\
\Gamma(\Gc_0;F) &n=0, \\
\end{cases} $

\noindent where $\Gamma$ denotes smooth sections, forms a complex with the differential
\begin{align*}
\dcech f(g_1,\dots,g_{n+1})&:= g_1\cdot f(g_2,\dots,g_{n+1}) \\
& +\sum_{i=1\dots n}(-1)^if(g_1,\dots,g_ig_{i+1},\dots,g_{n+1}) +(-1)^{n+1}f(g_1,\dots,g_n)
\end{align*}
for $f\in C^n(\Gc;F),\:n>0$, and
\[ \dcech f(g):=g\cdot f(sg)-f(tg) \]
when $f\in C^0(\Gc;F).$

The cohomology $\check{H}(\Gc;F)$ of this complex will be called the \textbf{\v{C}ech-groupoid cohomology} of $\Gc$ with coefficients in $F$.  This is a generalization of \v{C}ech cohomology.  Indeed, if $\Gc$ is the \v{C}ech groupoid of a cover of some manifold, the \v{C}ech-groupoid complex agrees with the classical \v{C}ech complex and computes classical \v{C}ech cohomology of $F$ with respect to the cover.  Just as the cohomology groups of a cover are not equal to the sheaf cohomology of $F$ unless the cover is ``good'' (or more generally F-acyclic), \v{C}ech-groupoid cohomology is not in general equal to the groupoid (or stack) cohomology described in \cite{MC} or \cite{Beh} unless the groupoid is ``good'' in an appropriate sense.  One manifestation of this failure is that \v{C}ech-groupoid cohomology is not invariant under Morita equivalence of groupoids (which is a generalization of refinement of a cover).  Conditions under which a groupoid should be deemed good are addressed in Section \ref{S:stacks}, but it is useful to allow all groupoids at this time.

\subsection{\v{C}ech-Deligne cohomology}\label{S:CechDelingeCoho}

To describe our presentation of a gerbe with connection we use the \v{C}ech-groupoid (hyper)cohomology of the Deligne complex.
\begin{defn}\label{D:DeligneComplex}
The \textbf{$n^{th}-$Deligne complex} $(\Deli_{(n)},\ddeli)$ is the complex of sheaves of abelian groups
\[
 1\to\Uone\stackrel{\dlog}{\lra}2\pi\sqrt{-1}\Ac^1\stackrel{\dr}{\lra}
          \dots\stackrel{\dr}{\lra}2\pi\sqrt{-1}\Ac^n\lra 0
\]
where $\Ac^q$ is the sheaf of differential $q$-forms, $\dr$ is the de Rham differential, and $\dlog(f):=f^{-1}\dr f$.
In what follows, we are only using $\Deli_{(2)}$, so we write $\Deli\defined\Deli_{(2)}$.
\end{defn}

If $M\stackrel{\phi}{\to} N$ is a map of smooth manifolds, then pullback of differential forms gives a map $\Deli(\phi):\Deli_N\to\Deli_M$, thus for an \'etale groupoid $\Gc\arrows \Gc_0$, $\Deli_{\Gc_0}$ is naturally a $\Gc$-sheaf.  Indeed, let $x\stackrel{\gamma}{\to}y$ be an arrow in $\Gc$.  Using the local diffeomorphism property of \'etale groupoids, one sees that $\gamma$ induces a unique diffeomorphism $(U_x,x)\stackrel{\tilde{\gamma}}{\to}(U_y,y)$ between small enough pointed neighborhoods of $x$ and $y$.  This, in turn, induces the $\Gc$-action
\[
\Gc\times_s\Deli_{\Gc_0}\lra \Deli_{\Gc_0}
\]
\[ \gamma\cdot f:=\Deli(\tilde{\gamma}^{-1})(f)\in \Deli_{{\Gc_0},t\gamma}\quad\text{ for }f\in\Deli_{{\Gc_0},s\gamma}.
\]
Furthermore, the \v{C}ech and Deligne differentials commute, so the groups
\[ C^{p,q}(\Gc;\Deli):=C^p(\Gc;\Deli^q)\]
form a double complex with differential $\dcechdeli^{p,q}:=\ddeli+(-1)^q\dcech$.  The cohomology $\check{\Hb}(\Gc;\Deli):=H(\tot(C^{\bullet\bullet}(\Gc;\dcechdeli))$ will be called the \textbf{\v{C}ech-Deligne cohomology} of $\Gc$.

Of primary interest are the \v{C}ech-Deligne 2-cocycles.  Such a 2-cocycle on a groupoid $\Gc$ is the data for what we call a \textbf{gerbe with connection} on $\Gc$
\footnote{It would be more accurate to call this the data for a presentation of a gerbe with connection on the stack $B\Gc$ associated to $\Gc$, but we prefer to avoid that cumbersome terminology for the moment.  To avoid confusion, we will refer to the gerbes defined in \cite{Gir} (of which our gerbes are presentations) as \textbf{stack theoretic gerbes}.}.
It is given by a triple
\begin{equation}
\label{E:2cocycle}(\sigma,\theta,c)\in\Uone(\Gc_2)\times2\pi\sqrt{-1}\Ac^1(\Gc_1)\times2\pi\sqrt{-1}\Ac^2(\Gc_0) \end{equation}
satisfying
\begin{equation}
\dcech\sigma=1;\quad \dlog\sigma=\dcech\theta;\quad \dr\theta=-\dcech c.
\end{equation}
Note that $c$ is not necessarily $\dr-$closed.

\subsection{The twisted de Rham algebra}

Now we will describe the twisted de Rham algebra associated to a \v{C}ech-Deligne 2-cocycle on an \'etale groupoid $\Gc$.  This algebra will naturally be equipped with the structure of what is called a curved differential graded algebra (dga).

\begin{defn}\label{D:cdga}
A \textbf{curved dga} is a triple $\As=(\A^\bullet,d,c)$, where $\A^\bullet$ is an $\Nb-$graded, unital, associative algebra over a field $k$, equipped with a degree one map
\[d:\A^\bullet\to \A^{\bullet+1}\]
satisfying
\begin{itemize}
\item[(i)] $d(ab) = d(a)b + (-1)^{|a|}ad(b)$
\item[(ii)]$d^2(a)=[c,a];\quad c\in\A^2$
\item[(iii)] $dc=0$.
\end{itemize}
Condition (i) is the Leibnitz rule.  We call the fixed element $c\in\A^2$ the \textbf{curving} of the dga, and then condition (iii) is the requirement that $c$ satisfy the Bianchi identify. We write $\A$ for the degree zero component, $\A^0$, of the curved dga.  If (iii) is not satisfied, we will call the dga a \textbf{really curved dga}.
\end{defn}
For the case when $\Ac$ is nonunital, there are two changes to be made.  The first is that the curving no longer lies in $\A^2$ but is instead in a multiplier algebra of $\A^2$.  The second is the introduction of some analysis, which is necessary to get meaningful Morita equivalences between nonunital curved dga's.  These details will be carefully addressed, but let us first get to the example of the twisted de Rham dga.

\begin{defn}\label{D:twisteddeRhamAlg} Let $(\sigma,\theta,B)$ be a \v{C}ech-Deligne 2-cocycle on an \'etale groupoid $(\Gc\arrows \Gc_0)$ as in \ref{E:2cocycle}.  The \textbf{twisted de Rham dga} associated to $(\sigma,\theta,B)$ is the following (nonunital) really curved dga:
\begin{align}\label{E:twisteddeRhamAlg}
\As(\Gc,(\sigma,\theta,B))=(\Adot,d,c):=(\Gamma_c^\infty(\Gc,\sigma;t^*(\wedge^\bullet T^*_{\Gc_0,\Cb})),
\dr+\theta,B)
\end{align}
The first piece, $\Adot$, is called the associated \textbf{twisted de Rham algebra}.  As a vector space, $\Ac^\bullet$ is the smooth compactly supported sections of the pullback via $t$ of the complexified de Rham algebra over $\Gc_0$.  $\Adot$ has multiplication given by $\sigma$-twisted convolution on the groupoid and wedge product in the coefficients, and it is equipped with a derivation that is the de Rham differential modified by $\theta$, and this derivation has curving $B$.  More explicitly, we have, for $f,g\in\Adot$ and $\gamma,\eta,\tau\in\Gc$:
\begin{enumerate}
\item Multiplication:\ \ $f*g(\gamma):=\sum_{\eta\tau=\gamma}\sigma(\eta,\tau)f(\eta)\wedge \eta\cdot g(\tau)$
\item Derivation:\ \ $df(\gamma):=(\dr f)(\gamma)+\theta(\gamma)\wedge f(\gamma)$
\item Curving:\ \ $d^2f=d(\dr f+\theta\wedge f)$
\begin{align*}            &=(\dr)^2f+\dr\theta\wedge f
   -\theta\wedge\dr f+\theta\wedge\theta\wedge f+\theta\wedge\dr f \\
                          &=\dr\theta\wedge f.
\end{align*}
\end{enumerate}
\end{defn}
\noindent In fact $\Adot$ is a $*$-algebra, with the \newline
\begin{enumerate}
\item[(4)] Involution:\ \ $f^*(\gamma):=\sigma(\gamma^{-1},\gamma)^{-1}\overline{\gamma\cdot f(\gamma^{-1})}.$
\end{enumerate}
\begin{prop}\label{P:twistedDRalg} The twisted de Rham dga is a $($nonunital\thinspace$)$ really curved dga.  It is a curved $($but not really curved\thinspace$)$ dga precisely when $\dr B=0$.
\end{prop}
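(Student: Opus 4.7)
The plan is to check the three axioms of Definition \ref{D:cdga} (in its nonunital form, with $B$ playing the role of a degree-two multiplier rather than an element of $\Adot$) by matching each axiom to one of the three components of the \v{C}ech--Deligne cocycle condition on $(\sigma,\theta,B)$. Concretely, $\dcech\sigma=1$ will yield associativity of the convolution, $\dlog\sigma=\dcech\theta$ will yield the Leibniz rule for $d=\dr+\theta$, and $\dr\theta=-\dcech B$ will yield $d^2=[B,-]$. The Bianchi identity is then essentially decoupled from the cocycle data and will be equivalent to $\dr B=0$, whence the curved/really curved dichotomy stated in the proposition.

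For the algebra axioms I would first verify associativity of $\sigma$-twisted convolution by the standard computation: $(f*g)*h$ and $f*(g*h)$ produce the same sums up to the factor $\sigma(\eta,\tau)\sigma(\eta\tau,\rho)$ versus $(\eta\cdot\sigma(\tau,\rho))\,\sigma(\eta,\tau\rho)$, and these are equal precisely when $\dcech\sigma=1$. For the Leibniz rule I would expand $d(f*g)(\gamma)=\dr(f*g)(\gamma)+\theta(\gamma)\wedge(f*g)(\gamma)$ and substitute $\dr\sigma(\eta,\tau)=\sigma(\eta,\tau)\bigl(\theta(\eta)-\theta(\eta\tau)+\eta\cdot\theta(\tau)\bigr)$, which comes from $\dlog\sigma=\dcech\theta$. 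The term involving $\theta(\eta\tau)=\theta(\gamma)$ then cancels the $\theta(\gamma)\wedge(f*g)(\gamma)$ contribution, while the remaining $\theta(\eta)$ and $\eta\cdot\theta(\tau)$ pieces combine with the $\dr f$ and $\dr g$ terms — the latter carrying the sign $(-1)^{|f|}$ from moving the $1$-form $\eta\cdot\theta(\tau)$ past $f(\eta)$ — to reassemble $df*g+(-1)^{|f|}f*dg$, as required.

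For the curving, the local identity $d^2 f=\dr\theta\wedge f$ is already recorded in Definition \ref{D:twisteddeRhamAlg}, so the task reduces to recognizing the right-hand side as $[B,f]$. I would interpret $B\in\mathcal{A}^2(\Gc_0)$ as a degree-two multiplier acting by
\[ (B*f)(\gamma)=B(t\gamma)\wedge f(\gamma),\qquad (f*B)(\gamma)=f(\gamma)\wedge\gamma\cdot B(s\gamma), \]
formulas obtained by formally applying the convolution rule with $B$ supported on the unit space and using that $\sigma$ is normalized on units. Since $|B|=2$ is even, the graded commutator reduces to the ordinary commutator, and after pulling $\gamma\cdot B(s\gamma)$ to the left of $f(\gamma)$ (which introduces the sign $(-1)^{2|f|}=1$) one gets $[B,f](\gamma)=\bigl(B(t\gamma)-\gamma\cdot B(s\gamma)\bigr)\wedge f(\gamma)=-\dcech B(\gamma)\wedge f(\gamma)=\dr\theta(\gamma)\wedge f(\gamma)$, where the last equality is the third cocycle identity.

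Finally, for the Bianchi identity I would extend $d$ to multipliers by the natural rule $(dB)*f:=d(B*f)-B*df$; a short application of $d=\dr+\theta$ to the formula for $B*f$ above, together with $|B|$ even, shows that all $\theta$-contributions cancel and $(dB)*f=\dr B(t\gamma)\wedge f(\gamma)$, i.e.\ $dB$ acts as left-wedging by $\dr B$. Hence $dc=0$ as a multiplier if and only if $\dr B=0$, which is the advertised dichotomy. The only genuinely delicate point in the whole argument is this last interpretation: because $\Adot$ is nonunital, $B$ cannot literally lie in $\Adot$, and both $d^2=[B,-]$ and $dc=0$ only become well-posed once $B$ is read as a multiplier. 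I expect this bookkeeping — not the direct calculations, which are mechanical once unwound — to be the main obstacle, and it is precisely the issue that motivates the nonunital modifications of $\psa$ developed in Sections \ref{S:bornoalg}--\ref{S:bornoprops}.
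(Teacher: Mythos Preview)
Your proposal is correct and follows essentially the same route as the paper: each of the three \v{C}ech--Deligne cocycle identities is matched to the corresponding axiom (associativity, Leibniz, curving), and the Bianchi identity is then seen to be equivalent to $\dr B=0$. You supply more detail than the paper does---in particular the explicit multiplier formulas for $B*f$ and $f*B$ and the verification that $dB$ acts as left-wedging by $\dr B$---but the structure of the argument is identical.
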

\begin{proof}
One checks the following facts directly.
\begin{enumerate}
\item The multiplication is associative if and only if $\dcech\sigma=1,$ so $\Adot$ is indeed an associative algebra.
\item $d$ is a derivation (i.e. satisfies Leibnitz) for this multiplication if and only if $\dlog\sigma=\dcech\theta,$
\item The curving, $d^2f=\dr\theta\wedge f$ is equal to $-\dcech B\wedge f$ exactly when $\dr\theta=-\dcech B$, and by definition of multiplication, $-\dcech B\wedge f =B*f-f*B=:[B,f].$
\end{enumerate}
So a \v{C}ech-Deligne 2-cocycle was exactly what we needed.
\end{proof}
Note that this algebra is nonunital since it is based on functions with compact support.  Note also that the curving $B$ is a smooth 2-form on $\Gc$ (whose support is in $\Gc_0\subset\Gc$) which is not compactly supported, thus $B$ lies in the multiplier algebra of $\Ac^\bullet$.

\subsection{The twisted Dolbeault algebra}
If $\Gc$ is a complex \'etale groupoid, meaning that $\Gc$ is complex manifold and all structure maps are locally biholomorphisms, then instead of using the smooth Deligne complex we can use the \textbf{truncated Dolbeault complex}
\[
(\Dol,\ddol):=\quad 1\to\Cb^*\stackrel{\dbar\log}{\lra}\Ac^{0,1}\stackrel{\dbar}{\lra}\Ac^{0,2}\to 0.
\]
Here $\Ac^{p,q}$ is the sheaf of differential $(p,q)$-forms, $\dbar$ is the  Dolbeault differential, and $\dbar\log f:=f^{-1}\dbar f$.  A 2-cocycle in the groupoid cohomology of this complex,
\begin{equation}\label{E:Dolbeault2cocycle}
 (\sigma,\theta,B)\in\Cb^*(\Gc_2)\times \Ac^{0,1}(\Gc_1)\times\Ac^{0,2}(\Gc_0),
\end{equation}
is the data for what we call a \textbf{gerbe with $\dbar$-connection} on $\Gc$.
\begin{defn}\label{D:dolbeaultAlg} Just as in the smooth case, one forms a really curved dga, which we call the \textbf{twisted Dolbeault dga} associated to $(\sigma,\theta,B).$
\begin{align}\label{E:dolbeaultAlg}
\As(\Gc,(\sigma,\theta,B))=(\Ac^\bullet,d,c):=(\Gamma_c^\infty(\Gc,\sigma;t^*(\wedge^\bullet T^{0,1}_{\Gc_0})),
\dbar+\theta,B)
\end{align}
\end{defn}
$\Adot$ is called the \textbf{twisted Dolbeault algebra}. It is the space of smooth compactly supported sections of the  Dolbeault algebra, with twisted multiplication as in the de Rham algebra situation.   The derivation $d$ is given by $df(\gamma):=\dbar f(\gamma)+\theta(\gamma)\wedge f(\gamma)$.  The twisted Dolbeault algebra also has an involution, defined just as in the de Rham case.

The interesting thing here is that the curving $B$ need not be $\dr-$closed for the twisted Dolbeault dga to be a curved (but not really curved) dga.  Instead $B$ only needs to be $\dbar$-closed, that is holomorphic.

\begin{prop} The twisted Dolbeault dga is a $($nonunital\thinspace$)$ really curved dga.  It forms a curved $($but not really curved\thinspace$)$ dga if and only if $\dbar B=0$.
\end{prop}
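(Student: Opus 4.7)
The proof will follow the same template as Proposition \ref{P:twistedDRalg} for the twisted de Rham dga, since the Dolbeault complex is structurally parallel to the Deligne complex (replacing $\dr$ by $\dbar$, $\Ac^q$ by $\Ac^{0,q}$). The three cocycle conditions for $(\sigma,\theta,B)$ as a 2-cocycle in Dolbeault-\v{C}ech hypercohomology should match precisely the three axioms of a really curved dga: namely, $\dcech\sigma=1$ gives associativity, $\dbar\log\sigma=\dcech\theta$ gives the Leibnitz rule for $d=\dbar+\theta$, and $\dbar\theta=-\dcech B$ gives $d^2 f=\dbar\theta\wedge f = -\dcech B\wedge f = [B,f]$. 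These verifications are local and formal; they use only $\dbar^2=0$, the Leibnitz rule for $\dbar$, and the defining relations for twisted convolution, and therefore go through exactly as in the smooth case. So the first part of the statement (really curved dga) will follow at once.

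The genuinely new content is the Bianchi identity, which distinguishes a curved dga from a really curved one. Since the curving $B$ is not compactly supported but lies in the multiplier algebra of $\Adot$, I first need to interpret $dB$ as an operator on $\Adot$ via $(dB)\cdot f := d(B*f)-B*df$ and then compute this expression explicitly. The plan is: because $B$ is concentrated on $\Gc_0$ and $\sigma,\theta$ are normalized to be trivial on identity arrows, both $B*f$ and $f*B$ collapse to a single term, yielding
\[
B*f(\gamma)=B(t\gamma)\wedge f(\gamma), \qquad f*B(\gamma)=\gamma\cdot B(s\gamma)\wedge f(\gamma),
\]
where I use that a $(0,2)$-form wedges symmetrically with any form. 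Applying $d=\dbar+\theta$ to $B*f$ and comparing with $B*df$, the terms involving $\theta\wedge B(t\gamma)\wedge f(\gamma)$ and $B(t\gamma)\wedge\theta(\gamma)\wedge f(\gamma)$ cancel, since $\theta$ is of degree one and $B$ of degree two, so $\theta\wedge B=(-1)^{1\cdot 2}B\wedge\theta=B\wedge\theta$.

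What remains after the cancellation is
\[
(d(B*f)-B*df)(\gamma)=\dbar B(t\gamma)\wedge f(\gamma),
\]
which identifies $dB$, as a multiplier, with the section $\dbar B$ of $t^*(\wedge^2 T^{0,1}_{\Gc_0})$. Consequently the Bianchi identity $dc=0$ holds if and only if $\dbar B=0$, completing the second assertion.

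The one subtlety to watch out for is justifying that the various \v{C}ech-Deligne conditions remain equivalent to the algebra conditions once the coboundary operator $\dcech$ is taken in the Dolbeault-valued complex rather than the de Rham complex; but since the $\Gc$-action on $\Dol^q$ is defined just as on $\Ac^q$ via pullback along the induced local biholomorphisms, this equivalence is formal. I do not anticipate a substantive obstacle beyond the careful multiplier algebra bookkeeping in the Bianchi computation described above.
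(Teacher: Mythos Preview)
Your proposal is correct and follows exactly the approach the paper takes: the paper's proof is the single sentence ``The proof is the same as the proof of Proposition~\ref{P:twistedDRalg},'' and you carry that out explicitly, matching the three cocycle conditions with associativity, Leibnitz, and the curving identity. Your additional multiplier-algebra computation for the Bianchi identity is more detailed than anything the paper writes down (neither Proposition~\ref{P:twistedDRalg} nor this one spells it out), but it is correct and is implicitly what the paper has in mind; one small slip is that $\dbar B$ lands in $t^*(\wedge^3 T^{0,1}_{\Gc_0})$, not $\wedge^2$.
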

\begin{proof}  The proof is the same as the proof of Proposition \ref{P:twistedDRalg}. \end{proof}

\section{Bornological algebra}\label{S:bornoalg}

Having constructed nonunital curved dga's associated to gerbes with connection (that is, the twisted de Rham and Dolbeault dga's), we want to describe modules over them and Morita equivalences between them.  This is the first step towards the construction of the dg-category $\psa$ over a nonunital curved dga, which is our main tool for describing dualities.  In this section we begin the project by describing modules and Morita equivalences for arbitrary nonunital algebras.

Some analysis is necessary to describe Morita equivalences between nonunital algebras.  This is because Morita equivalence bi-modules for nonunital algebras are non-finitely generated (see e.g. Proposition \ref{P:sweet}), which necessitates the use of completed tensor products.

We will use bornological analysis as opposed to topological analysis.  Roughly speaking, this means that we shift emphasis from open sets and continuous maps to bounded (\emph{born\'ee} in French) sets and bounded maps.  The category of bornological vector spaces (or more generally bornological $A$-modules) has been found to be the correct framework for homological algebra in the functional analytic context \cite {H},\cite{M1}, \cite{M2}, due to the fact that this category is complete and cocomplete and because there is a tensor product which is adjoint to a certain internal hom.  Further, many algebras arising in noncommutative geometry, for example the smooth compactly supported convolution algebra of a Lie groupoid, have nice properties when viewed as bornological spaces.

We begin with a brief review of bornological analysis.  Here is a short list of references for the reader who is interested in more detail:  a history of the study of bornological spaces, as well as proofs of many of the basic results in the field, can be found in \cite{HN1} and \cite{HN2}; several important results about bornological algebras are proven in \cite{H}; homological algebra relative to bornological spaces is developed in the work of Meyer, for example in \cite{M1} and \cite{M2}.

\subsection{Definitions for bornological analysis}
A \textbf{bornology} on a set $X$ is a collection $\Bc$ of subsets of $X$ that contains the singletons and is closed under taking finite unions and subsets.  The elements of $\Bc$ are called the \textbf{bounded} subsets.
Let $\kb$ be the real or complex numbers.  A $\kb$-vector space equipped with a bornology which is closed under homothetie and finite sums is called a \textbf{bornological $\kb$-vector space}.
A subset $S\subset V$ is called \textbf{circled} if $\lambda v\in S$ whenever $v\in S$, $\lambda\in\kb$ and $|\lambda |\leq 1$.  The circled convex hull $S^\circ$ of any subset $S$ is by definition
\[
S^\circ:=\{\lambda\cdot(tv+(1-t)w)\;|\;\lambda\in \kb,\ |\lambda |\leq 1,\
t\in[0,1],\ v,w\in S\}
\]
A bornology is called \textbf{convex} if for
every bounded set $S\in\Bc$, we have $S^\circ\in\Bc$ as well.  If $S\in\Bc$ is already circled and convex, it is called a \textbf{disc}.

The linear span $V_S$ of a disc S has a unique semi-norm whose unit ball is $S$; $S$ is called \textbf{norming} if this semi-norm is a norm, and \textbf{completant} if $V_S$ is complete with respect to
this norm.  When $S$ is a completant disc, $V_S$ is a Banach space and
is in particular Hausdorff.    Call $V$ a \textbf{complete bornological
vector space (complete bvs)} if every bounded set is contained in a bounded
completant disc.  Note that the bornology of a complete bvs $V$ is generated by the directed set $\Bc_c$ of completant discs, and we may in fact identify $V$ with the inductive limit of Banach spaces
\[ V\isom \lim_{S\in\Bc_c} V_S. \]
The notions of convergence,
Cauchyness, etc... of a net $\{ x_\alpha\!\}$ can be defined within
bornology, and for a complete bvs these are equivalent to the
usual notions, taken in any Banach space $V_S\subset V$ in which the net will eventually be confined.
In particular the closure of any set is defined.

A linear map between two bornological spaces $V$ and $W$ is called \textbf{bounded} if it sends bounded sets to bounded sets.  $\Hom(V,W)$ has a natural bornology called the \textbf{equibounded bornology}, whose bounded subsets are those $S\subset\Hom(V,W)$ such that $S(U)$ is bounded in $W$ for all bounded $U\subset V$.
We write $\Lc(V,W)$ for this bornological vector space.  It can be shown that $\Lc(V,W)$ is
convex (resp. complete) whenever $W$ is convex (resp. complete).

The \textbf{complete tensor product} of two complete bvs's
$V$ and $W$ is a complete bvs $V\tens W$
equipped with a bounded map $V\times W\to V\tens W$ which induces
an isomorphism $\Lc_{Bilinear}(V\times W,X)\isom\Lc(V\tens W,X)$ for all X,
which is universal in the appropriate sense.  It follows from this definition that
\[ \Lc(V\tens W,X)\simeq \Lc(V,\Lc(W,X)). \]
The space $V\tens W$ always exists (see \cite{HN2}).  It is realized concretely as the inductive limit
$V\tens W\isom \lim V_S\otimes_\pi W_T$ where $S$ and $T$ run over the respective directed sets of bounded complete discs in $V$ and $W$, and $\otimes_\pi$ is the projective tensor product of Banach spaces. Let $S$ and $T$ be bounded discs in $V$ and $W$ and let $S\tens T$ denote the smallest complete disc in $V_S\otimes_\pi W_T$ containing the set $\{\ s\tens t | s\in S,\ t\in T \}$.  Such sets form a basis for the bornology of $V\tens W$.

We write \textbf{Bor} for the category of bornological $\kb$-linear spaces with hom sets $\Hom(V,W):=\Lc(V,W).$

\subsection{Examples of bornological spaces}

There are two natural convex bornologies that one can attach to a locally convex topological vector space.  The first is the \textbf{von Neumann bornology}, whose bounded sets are those absorbed by each neighborhood of the origin, and the second is the \textbf{precompact bornology}, whose bounded sets are those which can be covered by a finite union of translates of any neighborhood of the origin.  These bornologies are both complete when the topological vector space is complete.

Any vector space admits the \textbf{fine} bornology, whose bounded sets are those contained in the circled convex hull of some finite set of points.  This is the smallest convex bornology since it is generated by the circled convex hulls of one point sets, and thus every linear map from a fine space is bounded.  Also the tensor product of two spaces, one of which is fine, agrees with the algebraic tensor product.  In particular, endowing a vector space with the fine bornology gives a fully faithful functor
\[ \textrm{ Vector spaces }\tof{fine}\bor \]
 that respects tensor products.  A fine space is complete, and if it happens to be an algebra then it is a complete bornological algebra since the multiplication is automatically bounded.

An important example of a complete bornological vector
space (which has some unpleasant properties as a topological vector space) is the
space of smooth compactly supported functions on a noncompact
manifold.  The bornology is given as follows.  Let $\{K_n\}$ be an increasing sequence of compact sets exhausting a manifold $M$. Then we can write
\begin{equation}\label{E:seminorms}
C^\infty_c(M)=\lim_{n\to \infty}C^\infty_{K_n}(M)
\end{equation}
where $C^\infty_{K_n}(M)$ is the Fr\'echet space of $C^\infty$ functions on $M$ with support in $K_n$. Thus $C^\infty_c(M)$ a \textbf{Limit of Fr\'echet space (LF-space)}.  By definition an LF-space is a locally convex topological vector space which is a countable union $V=\cup V_n$ of increasing subspaces which are Fr\'echet for the subspace topology and for which the topology on $V$ is the finest locally convex topology allowing the inclusions $V_n\hookrightarrow V$ to be continuous.  One can verify that each bounded set in the von Neumann or precompact bornology on an LF space is contained in one of the (complete) subspaces $V_n$ (see \cite{Tr} Chapter 14-15); this implies that these bornologies are complete.

\subsection{Bornological algebras and modules}

A \textbf{bornological algebra} is a bornological vector space with an associative, bounded multiplication.   A \textbf{bornological $A$-module} is a bornological space with a bounded action of $A$.

The vector space of bounded $A$-linear maps between two
$A$-modules has a natural bornology, the \textbf{equibounded}
bornology, in which a collection of maps $S\subset\Hom_A(M,N)$ is
bounded if for every bounded set $U\subset M$, $S(U)$ is bounded
in $N$.  We denote this bornological space $\LA(M,N)$ and note
that it is convex (resp. complete) whenever $N$ is convex (resp.
complete).

The \textbf{$A$-balanced tensor product} of a right module
$M$ and left module $N$, when $A,M$, and $N$ are complete, is the
cokernel of the map
\[ M\tens A \tens N\to M\tens N;\qquad m\tens a\tens n\mapsto ma\tens n-m\tens an \]
where cokernel is defined as the range modulo the closure of the
image.  We write $M\tensA N$ for this (complete) space.

\begin{example}\cite{M1}\label{ex:LF algebra}  Suppose an LF-space $A$ has an associative multiplication that is separately continuous.  Separate continuity on a Fr\'echet space implies joint continuity so the multiplication is jointly continuous when restricted to Fr\'echet subspaces.  But a bounded set in $A\times A$ for the von Neumann or precompact bornology is contained in a Fr\'echet subspace, so multiplication is jointly continuous on bounded sets and therefore bounded. Thus an LF space with associative separately continuous multiplication, called an \textbf{LF algebra}, is a complete bornological algebra for the precompact and von Neumann bornologies.
\end{example}
\textbf{From now on all bornological spaces, including bornological algebras and modules, will be complete unless otherwise stated.}
\[\textrm{Let us set } A^+=
\begin{cases}
& \text{ the \textbf{unitalization} of $A$, when $A$ is nonunital. }\\
      & \text{ $A$, when $A$ is unital. }
\end{cases}
\]
The \textbf{unitalization} is $A\oplus\kb$ as a bornological vector
space, and its elements will be written $\{(a,\lambda)\:|\:a\in
A,\:\lambda\in\kb\}$.

We now recall two properties, \textbf{quasi-unitality} and \textbf{multiplicative convexity}, that a bornological algebra should have in order for its module categories to be well behaved.
\begin{defn}\cite{M2}  A bornological algebra $A$ has an  \textbf{approximate identity} if for any bounded subset $S\subset A$ there is a sequence $\{\mu_n\}\subset A$ (depending on $S$) such that $\mu_nx$ and $x\mu_n$ converge uniformly to $x$ for $x\in S$.  (That means uniformly in a Banach space $A_T$, where $T$ is any bounded complete disc containing $S$.) An algebra is said to be \textbf{quasi-unital} if it has an approximate identity and if furthermore there are left and right $A^+$ module maps
\[ A\to A^+\tens A \text{ and } A\to A\tens A^+  \]
which are sections of the multiplication maps.  (A left or right section exists if and only if $A$ is projective (see below) as a left or right $A^+-$module.)
\end{defn}
In practice there may be a sequence $\{\mu_n\}$ as in the previous definition which does not depend on the bounded subset $S$.  In this case we will say $\{\mu_n\}$ is an approximate identity for $A$.  Then sequences which do depend on $S$ might aptly be called local approximate identities, though we will not need the terminology.

One reason quasi-unitality is important is that if an algebra $A$ is quasi-unital and $M$ is any $A$-module, then the natural map $M\tens_A A\to M$ is injective (\cite{M2} Lemma 15), so that there are plenty of $A$-modules satisfying $M\tens_AA\simeq M$.

We write $MA$ for the image of $M\tens_A A\to M$.
\begin{defn} A bornological algebra is called \textbf{multiplicatively convex} (another name in use is {\em locally multiplicative}) when each bounded set is absorbed by a bounded disc that is closed under multiplication.  Equivalently, $A$ is multiplicatively convex if the spectral radius of every bounded set is finite.  The \textbf{spectral radius} $\rad(S)$ of a bounded set $S$ is the smallest number $t$ such that the \textbf{multiplicative closure} $\bigcup_{n\geq 1}(t^{-1}S)^n$ of $t^{-1}S$ is bounded, or infinity if no such $t$ exists.
\end{defn}
Here are two lemmas concerning multiplicative convexity for bornologies associated to LF-algebras.  They will be used in Section \ref{S:bornoprops}.
\begin{lem}\label{L:LFborno1} \textup{(\cite{Pus} Lemma 1.13)} Let $A$ be a Fr\'echet algebra.  Then the following are equivalent:
\begin{enumerate}
\item There exists a neighborhood $U$ of  $\ 0$ $($called a \textbf{small} neighborhood\thinspace$)$ such that each compact subset of $U$ has precompact multiplicative closure.
\item Each null sequence in $A$ has an end with precompact multiplicative closure.
\end{enumerate}
\end{lem}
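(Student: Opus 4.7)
The implication $(1) \Rightarrow (2)$ is straightforward: given a small neighborhood $U$ and a null sequence $\{x_n\}$, I would choose $N$ with $x_n \in U$ for $n \geq N$, observe that $K := \{0\} \cup \{x_n\}_{n \geq N}$ is a compact subset of $U$ (a convergent sequence together with its limit), and apply (1) to conclude that its multiplicative closure is precompact; the tail $\{x_n\}_{n \geq N}$ then inherits precompact multiplicative closure as a subset, providing the required end.

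For the reverse implication $(2) \Rightarrow (1)$, the plan is to argue by contrapositive: assume no small neighborhood exists, and produce a null sequence every end of which has non-precompact multiplicative closure. Since $A$ is Fr\'echet and hence metrizable, fix a decreasing countable basis $U_1 \supset U_2 \supset \cdots$ of neighborhoods of $0$. For each $n$ the hypothesis yields a compact set $K_n \subset U_n$ with non-precompact multiplicative closure, and hence a continuous seminorm $\rho_n$ together with a bounded sequence of products $y_{n,k} = x_{n,k,1}\cdots x_{n,k,m(n,k)}$, with factors $x_{n,k,j} \in K_n$, satisfying $\rho_n(y_{n,k} - y_{n,k'}) \geq 1$ for $k \neq k'$. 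The idea is to splice the factors $x_{n,k,j}$ into a single null sequence $\{z_l\}$ in consecutive blocks $B_n$, where $B_n$ lists in order the factors of $y_{n,1}, \ldots, y_{n,r(n)}$ drawn from $K_n$. Since each $B_n$ is contained in $U_n$ and the blocks are arranged by increasing $n$, the resulting sequence is null, and any tail past the start of $B_{n_0}$ contains every subsequent block $B_m$ ($m > n_0$) in full, so that its multiplicative closure contains all the products $\{y_{m,k} : m > n_0,\ k \leq r(m)\}$.

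The hard part will be guaranteeing that these products actually witness non-precompactness of the tail's multiplicative closure. My plan is to handle this by a preparatory reduction: replace each compact $K_n$ by a \emph{finite} subset $F_n \subset K_n$ that still has non-precompact multiplicative closure, exploiting total boundedness of $K_n$ together with continuity of multiplication on bounded sets of the Fr\'echet algebra to transfer the $\rho_n$-separation of the $y_{n,k}$'s onto finitely many of their factors. Once such finite $F_n$'s are in hand, setting $B_n := F_n$ makes every tail past $B_{n_0}$ contain $F_m$ in its entirety for every $m > n_0$, so the tail's multiplicative closure contains the non-precompact multiplicative closure of $F_m$, contradicting (2). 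The main subtlety lies precisely in this finite reduction, since the multiplication in a Fr\'echet algebra need not behave well on unbounded-length products, so a careful choice of $F_n$---finer than a naive $\varepsilon$-net in $K_n$---will be required.
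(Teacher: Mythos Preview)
The paper does not supply its own proof of this lemma; it is quoted verbatim from Puschnigg (\cite{Pus}, Lemma~1.13) and used only as a black box in the subsequent Lemma~\ref{L:LFborno2}. So there is no in-paper argument to compare your proposal against, and the evaluation below is of your sketch on its own merits.

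Your treatment of $(1)\Rightarrow(2)$ is correct and standard: a tail of a null sequence together with its limit is a compact subset of $U$, and (1) applies.

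For $(2)\Rightarrow(1)$, your contrapositive strategy---extract compacts $K_n\subset U_n$ with non-precompact multiplicative closure, then splice factors of bad products into a single null sequence---is the natural one, and you have correctly located the crux: the \emph{finite reduction}, i.e.\ passing from each $K_n$ to a finite $F_n\subset K_n$ whose multiplicative closure is still non-precompact. However, this step is not established, and your hint toward it does not close the gap. The difficulty is exactly what you flag: the witnessing products $y_{n,k}=x_{n,k,1}\cdots x_{n,k,m(n,k)}$ necessarily have \emph{unbounded} lengths $m(n,k)$ (if the lengths were bounded by $M$, all $y_{n,k}$ would lie in the compact set $\bigcup_{m\le M}K_n^m$ and could not be $\rho_n$-separated). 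Uniform continuity of a fixed $m$-fold product on $K_n^m$ therefore gives no control, and an $\varepsilon$-net in $K_n$---however fine---does not approximate arbitrarily long products within a fixed tolerance. You assert that ``a careful choice of $F_n$, finer than a naive $\varepsilon$-net, will be required,'' but no such choice is specified, and it is not evident that one exists: a priori there could be a compact $K$ whose multiplicative closure is non-precompact while every finite subset has precompact multiplicative closure. Absent either a proof of the finite reduction or an alternative mechanism for producing the bad null sequence, the argument for $(2)\Rightarrow(1)$ is incomplete. If you wish to pursue this route, you will need to consult Puschnigg's original argument to see how this obstruction is actually overcome.
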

Such Fr\'echet algebras are called \textbf{admissible}.
\begin{lem}\label{L:LFborno2} Let $A=\lim A_i$ be an LF-algebra for which each Fr\'echet algebra $A_i$ is admissible.  Then $A$ is multiplicatively convex for the precompact bornology.
\end{lem}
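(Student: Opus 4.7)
My plan is to reduce to a single Fréchet level where admissibility applies and then scale. Suppose $S \subset A$ is bounded for the precompact bornology, i.e.\ precompact in the LF-topology. I would first invoke the property of LF-spaces noted after equation \eqref{E:seminorms}: every bounded set in the precompact bornology on $A$ is contained in one of the constituent Fréchet subspaces $A_i$. Since the inclusion $A_i \hookrightarrow A$ is a topological embedding (the defining property of strict LF-spaces as used in the paper), $S$ is actually precompact inside $A_i$, and in particular its closure $\overline{S}$ in $A_i$ is compact.

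Next I would exploit admissibility of $A_i$. By Lemma \ref{L:LFborno1} there is a small neighborhood $U$ of $0$ in $A_i$ such that every compact subset of $U$ has precompact multiplicative closure. Since $\overline{S}$ is compact it is bounded in the topological-vector-space sense, hence absorbed by $U$: choose $t > 0$ with $t^{-1}\overline{S} \subset U$. Admissibility then gives that $\bigcup_{n\geq 1}(t^{-1}\overline{S})^n$ is precompact in $A_i$; since $\bigcup_{n\geq 1}(t^{-1}S)^n$ is contained in it, this subset is also precompact in $A_i$, and therefore precompact (and thus bounded) in $A$ by continuity of the inclusion. This shows $\mathrm{rad}(S) \leq t < \infty$ for every precompact $S$, which is the definition of multiplicative convexity.

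The only delicate point is the compatibility of the bornologies on $A$ and $A_i$, namely that the restriction of the precompact bornology on $A$ to $A_i$ coincides with the precompact bornology of the Fréchet topology on $A_i$. This is exactly where the strict LF-space structure is essential, and it is the only ingredient besides Lemma \ref{L:LFborno1} that the argument consumes. Once this compatibility is in hand, the remainder is just the scaling observation above, so I expect no serious technical obstacle.
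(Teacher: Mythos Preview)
Your argument is correct and follows essentially the same route as the paper: reduce a precompact $S\subset A$ to some $A_i$, invoke admissibility to obtain a small neighborhood $U$, scale $S$ into $U$, and conclude finite spectral radius. The only difference is in the absorption step: you cite the standard TVS fact that compact (hence bounded) sets are absorbed by every neighborhood of $0$, whereas the paper argues this directly by covering $S$ with finitely many translates of a metric ball and using the triangle inequality to bound those translates inside a larger ball. Your phrasing is arguably cleaner, since the paper's metric argument implicitly relies on a scaling relation between $B_r$ and $\lambda B_r$ that is not automatic for a general translation-invariant Fr\'echet metric; the abstract absorption fact sidesteps this. Either way the logical structure is the same, and your flagged compatibility point (that the precompact bornology on $A$ restricts to that of $A_i$) is exactly the strict-LF property the paper is using.
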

\begin{proof} First, for metric spaces (in particular Fr\'echets) a set is precompact if and only if it is covered by finitely many translates of any neighborhood of the origin.  So let $S$ be a bounded set in $A$.  Then $S$ is actually a precompact subset of some $A_i$.  Let $U$ be a small neighborhood of the origin in $A_i$.  Since $S$ is precompact it is covered by finitely many translates of $U$.  We may assume $U$ is a ball $B_r$ centered at the origin of radius $r$ with respect to some translation invariant metric $\rho$ on $A_i$.  We claim there is a $\lambda\in\Rb$ such that $\lambda U$ actually contains $S$.  This follows because if $y\in B_r$ and $x\in A_i$, we have
\[ \rho(0,x+y)=\rho(-x,y)\leq\rho(-x,0)+\rho(0,y) \]
which implies
\[ x+B_r\subset B_{(\rho(0,x)+r)}.\]
But then $\lambda^{-1} S$ is contained in $U$ and thus has precompact multiplicative closure since $U$ is small.  Thus $S$ has finite spectral radius.
\end{proof}

\subsection{Relative homological algebra}

Let $A$ denote a unital bornological algebra and write $\Mod(A)$
for the category of bornological right $A$-modules. Call a
sequence
\begin{equation}\label{E:exact}
0\lra L\lra M\lra N\lra 0
\end{equation}
in $\Mod(A)$ \textbf{exact} if it is algebraically exact and has a
bounded $\kb$-linear splitting.  Then a module $P$ is said to be
\textbf{projective} if the functor $\LA(P,\cdot\ )$ takes exact sequences
in $\Mod(A)$ to exact sequences in $\bor$.  $P$ is projective if
and only if it is a direct summand of a \textbf{(relatively) free
module}, that is a module of the form $\Free(V):=A\tens V$ for
some $V\in\bor$.  We say relatively free because the functor
$\Free$ is left adjoint to the forgetful functor
\[\forget:\Mod(A)\lra\bor, \]
as opposed to the usual forgetful functor to vector spaces.  $\Mod(A)$
is not in general an Abelian category but it still has enough nice
properties:
\begin{prop}\cite{M2} $\Mod(A)$ with the class of exact sequences of the form \eqref{E:exact} is a Quillen exact category.  This category is complete and cocomplete and has enough injectives and enough projectives.
\end{prop}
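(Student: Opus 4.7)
The plan is to lean on the relative homological framework set up by the adjunction between the forgetful functor $\forget:\Mod(A)\to\bor$ and its left adjoint $\Free(V)=A^+\tens V$ and right adjoint $V\mapsto\LA(A^+,V)$. The class of admissible sequences in $\bor$ is likewise ``algebraically exact with bounded $\kb$-linear splitting,'' and with respect to this class every object of $\bor$ is tautologically both projective and injective: splittings are exactly what define admissibility, so a bounded map from any $V$ into a quotient lifts via the splitting, and a bounded map out of any subobject extends via the retraction.

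To verify the Quillen exact category axioms in Keller's formulation, I would check that admissibility is closed under isomorphism, contains all split sequences, that admissible monos and epis compose, and that pushouts of admissible monos along arbitrary maps (resp.\ pullbacks of admissible epis) exist and remain admissible. Each reduces to an explicit construction of the required bounded splitting from the given one; completeness of the bornologies is used to ensure that the quotients by closed submodules appearing in pushouts remain in the category. For (co)completeness, limits are algebraic limits endowed with the initial bornology (bounded iff bounded in every projection) and are complete because closed subspaces of products of complete bvs are complete; colimits reduce to coproducts (bornological direct sums, visibly complete) and coequalizers of $f,g$, formed as $N/\overline{\im(f-g)}$ with the quotient bornology, both of which inherit a natural $A$-module structure.

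For enough projectives: the multiplication map $A^+\tens M\to M$, $a\tens m\mapsto am$, is an admissible epi via the splitting $m\mapsto 1\tens m$, and $\Free(M)=A^+\tens M$ is projective because
\begin{equation*}
\LA(\Free(V),N)\simeq\Lc(V,\forget(N))
\end{equation*}
converts admissible epis in $\Mod(A)$ into split surjections in $\bor$. Dually, for enough injectives: the bounded $A$-linear map $M\to\LA(A^+,\forget(M))$, $m\mapsto(a\mapsto am)$, is an admissible mono with bounded retraction $\phi\mapsto\phi(1)$, and $\LA(A^+,V)$ is injective because $\LA(-,\LA(A^+,V))\simeq\Lc(\forget(-),V)$ sends admissible monos in $\Mod(A)$ to split injections in $\bor$.

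The main work, and the point that merits the most care, is the pushout axiom for the exact structure: given an admissible mono $L\hookrightarrow M$ with retraction $r$ and an arbitrary bounded $A$-map $L\to L'$, one must realize the pushout in $\Mod(A)$ as a bornological quotient of the algebraic pushout by a closed submodule and produce an explicit bounded splitting of the induced mono $L'\hookrightarrow M\sqcup_L L'$ from $r$. Completeness of the bornological spaces is crucial here, since without it the closure operation need not produce an object of the category; the remainder of the proposition then falls out essentially formally from the two adjunctions above.
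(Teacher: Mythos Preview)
The paper does not actually prove this proposition: it is stated with an inline citation to \cite{M2} and no proof is given. Your outline is essentially the standard relative-homological-algebra argument one finds in Meyer's work, and it is correct in spirit: the adjunction $\Free\dashv\forget$ (and its right-adjoint companion $V\mapsto\LA(A^+,V)$) is exactly what produces enough projectives and injectives, and the exact-category axioms are verified by transporting the $\kb$-linear splittings through the relevant pushout/pullback constructions. One small caveat: your treatment of cocompleteness via $N/\overline{\im(f-g)}$ is the right move in the complete bornological setting, but you should note that this ``separated coequalizer'' is the categorical coequalizer precisely because all objects of $\bor$ (as used here) are assumed complete and hence separated; without that standing assumption the universal property could fail. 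Otherwise your sketch matches what the cited reference does, only in more detail than the present paper provides.
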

This means that as long as we work relative to $\bor$, most of the
tools of homological algebra are still available.

When $A$ is nonunital, define $\Mod(A):=\Mod(A^+)$.  This is consistent notation because every action of $A$ extends uniquely to a unital action of $A^+$.  We may use the notations $\Lc_A$ and $\Lc_{A^+}$ as well as $\otimes_A$ and $\otimes_{A^+}$ interchangeably.

It should be noted however, that the free modules in $\Mod(A)$ are of the form $A^+\tens V$.  In particular $A=A\tens\kb$ is not a free $A$-module when $A$ is nonunital, since $\LA(A\tens\kb,A)$ and $\Hom_{\bor}(\kb,A)\isom A$ are not isomorphic (the first one is unital and the second is not).  Thus one should remember that the terms finitely generated, projective and free are defined in $\Mod(A^+)$.  Those modules $M$ over a quasi-unital algebra $A$ that satisfy $M\tensA A\isom M$ are called \textbf{essential} or \textbf{non-degenerate} modules.

 Now we have the terminology to define a Morita equivalence bimodule for nonunital algebras.
\begin{defn}\label{D:moritabimodule} Let $A$ and $B$ be quasi-unital bornological algebras.  Then an essential $(A\!-\! B)-$bimodule $X$ is a \textbf{Morita equivalence bimodule} if there is an essential projective $(B\!-\! A)-$bimodule $Y$ and bornological isomorphisms
\[
A\isom X\tensB Y \:\text{ and }\:
B\isom Y\tensA X.
\]
\end{defn}
\begin{lem}\label{rem:dualmodule} Suppose that $X$ is a Morita equivalence $A-B$ bimodule and let $Y$ be a dual Morita equivalence bimodule.  Then $Y\simeq B X^\vee$, where $X^\vee:=\LA(X,A).$
\end{lem}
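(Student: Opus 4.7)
The plan is to construct mutually inverse bounded $(B,A)$-bimodule maps between $Y$ and $BX^\vee$ directly from the Morita data, using associativity of the balanced tensor product.

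First, I would endow $X^\vee=\LA(X,A)$ with the natural $(B,A)$-bimodule structure $(fa)(x):=f(x)a$ and $(bf)(x):=f(xb)$; both actions are manifestly bounded. Using the Morita isomorphism $\mu:X\tensB Y\simeq A$ of $(A,A)$-bimodules, I then define
\[ \phi:Y\to X^\vee,\qquad \phi(y)(x):=\mu(x\tens y). \]
Boundedness and left-$A$-linearity of $\phi(y)$ in $x$ are immediate, and the fact that $\mu$ is $(A,A)$-bilinear and $B$-balanced makes $\phi$ a bounded $(B,A)$-bimodule map. Since $Y=BY$ by essentiality, one has $\phi(Y)\subseteq BX^\vee$.

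Next, I would construct the putative inverse. Each $f\in X^\vee$ induces a bounded $A$-balanced map $Y\tens X\to Y$, $y\tens x\mapsto y\cdot f(x)$, which descends to $Y\tensA X\to Y$; composing with $\nu^{-1}$, where $\nu:Y\tensA X\simeq B$ is the second Morita isomorphism, gives a bounded map $B\to Y$ depending linearly on $f$. Allowing $b$ and $f$ to vary yields a bounded bilinear map $\psi:B\tens X^\vee\to Y$. A short calculation using that $\nu$ is $(B,B)$-bilinear and the definition of the left $B$-action on $X^\vee$ shows that $\psi$ is $B$-balanced, so it descends to $\psi:B\tensB X^\vee\to Y$.

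The heart of the proof is verifying that $\psi\phi=\mathrm{id}_Y$ and that $\phi\psi$ equals the multiplication $B\tensB X^\vee\to BX^\vee\hookrightarrow X^\vee$. Both identities come from the two associativity isomorphisms
\[ (X\tensB Y)\tensA X\;\simeq\;X\tensB(Y\tensA X),\qquad Y\tensA(X\tensB Y)\;\simeq\;(Y\tensA X)\tensB Y, \]
combined with the explicit forms of $\mu$ and $\nu$: writing $y=\sum b_iy_i'$ via $Y=BY$ and expanding each $b_i$ through $\nu^{-1}$ reduces $\psi\phi(y)$ to $y$; left-$A$-linearity of $f$ gives $\phi\psi(b\tens f)=bf$. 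Then $\psi\phi=\mathrm{id}$ forces $\phi$ to be injective, and the second identity forces $\phi(Y)=BX^\vee$ because $B\tens X^\vee\twoheadrightarrow BX^\vee$ is surjective. Since both $\phi$ and $\psi$ are bounded, the resulting bijection is automatically a bornological isomorphism. The main obstacle is purely bookkeeping: keeping the left/right module structures and the balancing conditions straight while shuffling triple tensor products; no further analytic input is required, since the Morita bimodules are already essential and all constructions factor through completed balanced tensor products.
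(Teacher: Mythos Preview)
Your argument is correct in substance but takes a genuinely different route from the paper. The paper's proof is a two-line appeal to Meyer's machinery: it first observes that the Morita equivalence $Y\tensA(-)$ identifies $X^\vee=\LA(X,A)$ with $\LB(Y\tensA X,\,Y\tensA A)\cong\LB(B,Y)$, which is the \emph{roughening} of $Y$ in the sense of \cite{M2}; then it quotes Meyer's Theorem~22 that essentializing the roughening returns the essentialization, so $B\tensB X^\vee\cong B\tensB Y\cong Y$. Your approach instead builds the isomorphism by hand from the two Morita isomorphisms $\mu,\nu$ and the associativity of balanced tensor products, never leaving the category of bimodules. This is more elementary and self-contained (no external citation needed), at the cost of the bookkeeping you acknowledge. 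Both arguments tacitly use that $\mu$ and $\nu$ are bimodule maps satisfying the usual compatibility $y\,\mu(x\tens y')=\nu(y\tens x)\,y'$, which the paper's terse Definition~\ref{D:moritabimodule} does not spell out but which is required already for the paper's own first step.

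One small point worth tightening: you phrase the conclusion as $\phi:Y\to BX^\vee\subset X^\vee$ with bounded inverse, but as written your inverse $\psi$ lives on $B\tensB X^\vee$, and passing between $B\tensB X^\vee$ and its image $BX^\vee$ via the multiplication map $m$ is only known to be an injection, not a bornological embedding. The clean fix is to set $\tilde\phi:=(1_B\tens\phi)\circ\mu_Y^{-1}:Y\to B\tensB X^\vee$ using the essentiality isomorphism $\mu_Y:B\tensB Y\simeq Y$; your computation already shows $\psi\tilde\phi=\mathrm{id}_Y$, and together with the injectivity of $\phi\psi=m$ this forces $\tilde\phi=\psi^{-1}$, giving a bounded inverse directly. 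This yields $Y\simeq B\tensB X^\vee$, which is exactly the form used downstream in Proposition~\ref{P:MoritaInvariantConvexity}.
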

\begin{proof}

First, the assumptions imply $\LA(X,A)\cong\LB(Y\tensA X, Y\tensA A)\cong\LB(B,Y)$.  The right hand side is called the roughening of $Y$ in \cite{M2}, and it is shown there (Theorem 22) that the essentialization of the roughening is the essentialization, that is, \linebreak $B\tensB\LB(B,Y)\cong B\tensB Y$.  By assumption, $B\tens B Y\cong Y$, so we have the desired result.
\end{proof}
\begin{prop}\label{P:MoritaInvariantConvexity} Suppose two bornological algebras $A$ and $B$ are Morita equivalent, then if $B$ is multiplicatively convex, so is $A$.
\end{prop}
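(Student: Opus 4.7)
The plan is to unwind the Morita equivalence to translate the multiplicative closure of a bounded subset of $A$ into a computation inside $B$, then invoke the multiplicative convexity of $B$.

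Let $S \subset A$ be an arbitrary bounded set; we must show $\rad(S) < \infty$. Using the bornological isomorphism $A \isom X \tensBc Y$ (Definition \ref{D:moritabimodule}) together with the description of the bornology on a completed tensor product as being generated by sets of the form $U \tensBc V$ with $U \subset X$, $V \subset Y$ bounded completant discs, we may find such $U$, $V$ with $S \subset U \tensBc V$. The multiplication on $A$ corresponds, via the Morita isomorphisms, to the composite
\begin{equation*}
(X \tensBc Y) \tens (X \tensBc Y) \lra X \tensBc (Y \tensAc X) \tensBc Y \isom X \tensBc B \tensBc Y \isom X \tensBc Y,
\end{equation*}
where we have used $Y \tensAc X \isom B$ and essentiality of $X$ and $Y$. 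On the level of bounded subsets this yields the inclusion $(U \tensBc V)^n \subset U \tensBc (VU)^{n-1} V$ for $n \geq 1$, where $VU \subset B$ is the image of $V \tens U$ under $Y \tens X \to Y \tensAc X \isom B$ and $(VU)^{n-1} V \subset Y$ uses the bounded left $B$-action on $Y$.

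Since $VU$ is bounded in $B$ and $B$ is multiplicatively convex, there exists $t > 0$ with $W := \bigcup_{k \geq 1} (t^{-1} VU)^k$ bounded in $B$. For $n \geq 2$ we then have
\begin{equation*}
(t^{-1} S)^n \subset t^{-n} U \tensBc (VU)^{n-1} V = U \tensBc \bigl(t^{-1} \cdot t^{-(n-1)} (VU)^{n-1}\bigr) V \subset U \tensBc (t^{-1} W) V,
\end{equation*}
while for $n = 1$ we have $(t^{-1} S)^1 \subset (t^{-1} U) \tensBc V$. Both $U \tensBc (t^{-1} W) V$ and $(t^{-1} U) \tensBc V$ are bounded in $X \tensBc Y$ by boundedness of the module actions and the tensor product, so their union contains the entire multiplicative closure $\bigcup_{n\geq 1}(t^{-1} S)^n$ and is bounded. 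Hence $\rad(S) \leq t < \infty$, and $A$ is multiplicatively convex.

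The main obstacle is bookkeeping: one must carefully match the iterated multiplication in $A$ with its image under the Morita identification and keep track of how the scaling factor distributes across the tensor legs, so that the spectral radius in $B$ of the auxiliary set $VU$ controls the spectral radius in $A$ of $S$. Apart from that, the argument is just a manipulation of bounded sets using the hypothesized bornological isomorphisms.
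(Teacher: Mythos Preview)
Your proof is correct and follows essentially the same approach as the paper: both bound the $n$-fold product of a basic bounded set $U\tensB V\subset X\tensB Y\simeq A$ by a set of the form $U\tensB (\text{bounded set in }B)^{n-1}V$ and then invoke multiplicative convexity of $B$. The only cosmetic difference is that the paper first applies Lemma~\ref{rem:dualmodule} to replace $Y$ by $BX^\vee$ and writes the middle factor as $T^\vee(S)$, whereas you work directly with the given $Y$ and the isomorphism $Y\tensA X\simeq B$; your route avoids that lemma and makes the landing of the middle factor in $B$ more transparent.
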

\begin{proof} Let $X$ be a Morita equivalence $A-B$-bimodule with dual bimodule $Y$.  By Lemma \ref{rem:dualmodule}, we may assume the dual bimodule is $Y\simeq B\tensB X^\vee$ where $X^\vee:=\Lc_A(X,A)$.  Thus  $X\tensB X^\vee\simeq X\tensB B\tensB X^\vee\simeq A$ and the multiplication in $A\simeq X\tensB X^\vee$ is $x_1\tens\phi_1\circ x_2\tens\phi_2:=x_1\phi_1(x_2)\tens\phi_2$.

We will show that every bounded subset of $X\tensB X^\vee$ has finite spectral radius.  Indeed, since $X\tensB X^\vee$ has the quotient bornology from $X\tens_\kb X^\vee$, every bounded subset is contained in the image in $X\tensB X^\vee$ of a set of the form $S\tens_\kb T^\vee$ where $S$ is a bounded subset of $X$ and $T$ is bounded in the equibounded bornology of $X^\vee$.  But one can check that $(S\tensB T^\vee)^n\subset S(T^\vee(S))^{n-1}\tensB T^\vee$, which implies $\rad(S\tensB T^\vee)\leq \rad(T^\vee(S))<\infty$.
\end{proof}

Note that whenever $X$ is finitely generated, the bornological tensor product equals the algebraic one, so we are reduced in the unital case to previous definitions which make no mention of bornology.

\section{Homotopy nuclear modules}

In this section we solve the following problems:  What is a useful notion of a finitely generated projective module over a nonunital algebra?   And if there are useful ``finite projective'' type conditions for modules over nonunital algebras, are these conditions preserved under Morita equivalence?

Here are two reasons that these are not trivial problems:
\begin{enumerate}
\item In general Morita equivalence bimodules are non-finitely generated, so there is no obvious reason that they should take finitely generated objects to finitely generated objects.
\item The truly ``noncompact'' algebras (those which are not Morita equivalent to any unital algebra) may have no essential finitely generated projective modules at all (see Proposition \ref{P:sweet}).  So the classical notion is not a useful notion.
\end{enumerate}

First we will provide an answer that works for algebras which are Morita equivalent to unital ones.  Then, adapting an idea of Quillen's (see Subsection \ref{S:Quillen}), we proceed with a solution which works for the ``noncompact'' algebras.

\subsection{Finite modules and Morita equivalence}
\begin{defn}
Call a module over a quasi-unital algebra \textbf{finite} if it is finitely generated, projective and essential.
\end{defn}
We will show that a Morita equivalence bimodule takes finite modules to finite modules.
It is easy to see that a Morita $(A-B)$-bimodule $X$ induces an equivalence between the categories of essential $A$-modules and essential $B$-modules, this is just because $X$ is itself essential.  Also, it is not hard to show that under this equivalence projectives are taken to projectives.  As remarked above, however, it is not obvious that the finite modules are taken to finite modules under this equivalence, since $X$ itself is not finitely generated.  Nonetheless, arguments using nuclearity will show that it is true.

\begin{defn} Let $M$ and $N$ be modules over a unital bornological algebra $B$, and let $A$ be an ideal in $B$.  A map $f\in\LB(M,N)$ is called \textbf{$A$-nuclear} (or simply \textbf{nuclear} when $A=B$) if it lies in the image of the natural map
\begin{equation}\label{E:nuclear}
N\otimes_B\Lc_B(M,A)\lra\Lc_B(M,N);\qquad (n\otimes
\phi)(m):=n\cdot\phi(m)
\end{equation}
\end{defn}

The following nontrivial proposition characterizes finitely generated projective modules over a bornological algebra $A$ in terms of nuclearity.  The conditions that $A$ be complete and multiplicatively
convex are indispensable.
\begin{prop}\cite{H}\label{P:nuclear}
Let $A$ be a multiplicatively convex unital bornological algebra and $M$ an $A$-module.
Then $M$ is finitely generated and projective if and only if $1_M\in\LA(M,M)$
is nuclear.
\begin{flushright}
    $\square$
\end{flushright}
\end{prop}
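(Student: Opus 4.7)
The plan is to treat the two implications separately and localize all of the difficulty in a single Neumann-series argument using multiplicative convexity of $A$. For the easy direction, a finitely generated projective right $A$-module $M$ is a retract of some $A^n$, so there are bounded $A$-linear maps $i:M\to A^n$ and $r:A^n\to M$ with $r\circ i=1_M$. Writing the coordinates of $i$ as $\phi_k\in\LA(M,A)$ and setting $m_k:=r(e_k)\in M$, one has $1_M=r\circ i=\sum_{k=1}^n m_k\cdot\phi_k(-)$, and this is literally the image of the finite element $\sum_{k=1}^n m_k\otimes\phi_k$ under the map in \eqref{E:nuclear}; hence $1_M$ is nuclear.

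For the converse, the starting point is that nuclearity gives an element $\xi\in M\tensA\LA(M,A)$ mapping to $1_M$ under \eqref{E:nuclear}. Using the realization of the complete bornological tensor product as an inductive limit of projective Banach tensor products, $\xi$ lives in some $M_S\otimes_\pi\LA(M,A)_T$ for bounded completant discs $S\subset M$ and $T\subset\LA(M,A)$, and Grothendieck's representation of elements in a Banach projective tensor product yields $\xi=\sum_{i=1}^\infty m_i\otimes\phi_i$ with $\sum_i\|m_i\|_S\|\phi_i\|_T<\infty$. Truncating at level $N$ produces an operator $p_N:=\sum_{i=1}^N m_i\cdot\phi_i(-)$ that manifestly factors as $M\to A^N\to M$ via $m\mapsto(\phi_1(m),\ldots,\phi_N(m))$ and $(a_k)\mapsto\sum_k m_k a_k$. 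If one can arrange that the remainder $r_N:=1_M-p_N$ has spectral radius strictly less than $1$ in the bornological algebra $\LA(M,M)$ for some $N$, then $p_N=1_M-r_N$ is invertible via a Neumann series, and post-composing its inverse with $(\phi_1,\ldots,\phi_N)$ exhibits $M$ as a retract of $A^N$, hence as finitely generated projective.

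The crux, and the step I expect to be the main obstacle, is this spectral-radius estimate for $r_N$. Iterating, $r_N^k$ is a sum over multi-indices $(i_1,\ldots,i_k)$ with all $i_j>N$ of rank-one operators whose scalar coefficients are products $\phi_{i_1}(m_{i_2})\phi_{i_2}(m_{i_3})\cdots\phi_{i_{k-1}}(m_{i_k})\in A$, each a product of $k-1$ elements of the bounded set $T(S)\subset A$. This is precisely where multiplicative convexity of $A$ is indispensable: it furnishes a finite spectral radius $\rho:=\rad(T(S))<\infty$ together with a single bounded disc $D\subset A$ closed under multiplication such that $(T(S))^{k-1}\subset\rho^{k-1}D$ for every $k$. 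Combining this uniform control over multiplicative growth with the Banach summability $\sum_{i>N}\|m_i\|_S\|\phi_i\|_T\to 0$ as $N\to\infty$ will confine $r_N^k$ to a bounded set in $\LA(M,M)$ whose natural ``size'' is summable in $k$ once $N$ is sufficiently large, yielding the invertibility of $p_N$. Without multiplicative convexity the iterated composites could grow uncontrollably and no Neumann series would converge in any usable bornology; arranging a single disc $D$ that accommodates all $k$ simultaneously is the delicate bookkeeping that drives the argument.
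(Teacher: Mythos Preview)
The paper does not actually give a proof of this proposition: it is quoted from Houzel \cite{H} and closed with a bare $\square$, so there is no argument in the paper to compare against. Your sketch is correct and is essentially Houzel's argument; in particular you have correctly isolated the one nontrivial point, namely that multiplicative convexity of $A$ controls the iterated composites $\phi_{i_1}(m_{i_2})\cdots\phi_{i_{k-1}}(m_{i_k})$ uniformly in $k$, so that the Neumann series for $(1_M-r_N)^{-1}$ converges once the tail is small enough. The very same manoeuvre resurfaces later in the paper in the proof of Proposition~\ref{P:honuclear}, implication \eqref{three}$\Rightarrow$\eqref{four}, where the authors split a nuclear $f$ as $\epsilon+F$ with $F$ of finite rank and invert $id_M-\epsilon$; there they justify convergence by saying ``$M\tensB\LB(M,B)$ is Morita equivalent to $B$ and is therefore multiplicatively convex by \ref{P:MoritaInvariantConvexity}'', but if you look at the proof of Proposition~\ref{P:MoritaInvariantConvexity} the operative estimate is exactly your bound $(S\tensB T^\vee)^n\subset S(T^\vee(S))^{n-1}\tensB T^\vee$, which does not use the Morita hypothesis at all. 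So your direct spectral-radius computation in $\LA(M,M)$ and the paper's later appeal to multiplicative convexity of $M\tensA\LA(M,A)$ are the same argument in slightly different packaging.
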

Note that even though the above proposition is for unital algebras, it also applies in the nonunital case because we have defined an $A$-module to be a module over the unitalization of $A$.

\begin{prop}\label{P:MoritaUnitalCase} A Morita equivalence $A-B$ bimodule $X$ takes finite modules to finite modules.
\end{prop}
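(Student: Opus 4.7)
The plan is to verify separately each of the three conditions in the definition of a finite module: that $N := M \tensA X$ is essential, projective, and finitely generated as a right $B$-module. Essentiality is immediate, since essentiality of $X$ gives $X \tensB B \cong X$ and then associativity of the completed bornological tensor product yields $N \tensB B \cong M \tensA (X \tensB B) \cong M \tensA X = N$. Projectivity will follow from the standard Morita argument: the functor $-\tensA X$ restricted to essential modules is a categorical equivalence with quasi-inverse $-\tensB Y$, so it sends a direct summand of a relatively free $A^+$-module $A^+ \tens V$ to a direct summand of $(A^+ \tens V) \tensA X \cong X \tens V$; combining with essentiality of $X$ then exhibits $N$ as a direct summand of a relatively free $B^+$-module.

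The main content of the proof is finite generation, which I would establish through the nuclearity criterion of Proposition \ref{P:nuclear}: $N$ is finitely generated projective if and only if $1_N \in \Lc_B(N,N)$ is nuclear. By hypothesis $1_M \in \Lc_A(M,M)$ is nuclear, so there is an element $\xi \in M \tensA \Lc_A(M,A)$ lifting $1_M$ under the canonical map \eqref{E:nuclear}. I would then transport $\xi$ through the Morita data to produce an element $\eta \in N \tensB \Lc_B(N,B)$ lifting $1_N$. The identification $Y \simeq B\Lc_A(X,A) = BX^\vee$ from Lemma \ref{rem:dualmodule}, together with the Morita isomorphisms $A \simeq X \tensB Y$ and $B \simeq Y \tensA X$, should assemble into a natural comparison map
\[
\Phi : M \tensA \Lc_A(M,A) \longrightarrow N \tensB \Lc_B(N,B)
\]
built out of the pairings $X \tensB X^\vee \to A$ and $X^\vee \tensA X \to B$ and the evident composition $\Lc_A(M,A) \tensA X \to \Lc_B(N,X)$. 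One then checks that $\Phi$ fits into a commutative square with the two nuclear maps and the canonical algebra isomorphism $\Lc_A(M,M) \cong \Lc_B(N,N)$ (which carries $1_M$ to $1_N$). Chasing $\xi$ around this square produces the desired $\eta = \Phi(\xi)$, and nuclearity of $1_N$ is then established.

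The principal obstacle will be verifying that $\Phi$ is well-defined and that the square commutes, all at the level of complete bornological tensor products rather than algebraic ones: one must ensure that the component pairings descend to the balanced tensor products $\tensA$ and $\tensB$, and that the passage between $\Lc_A(\cdot,A)$-valued and $\Lc_B(\cdot,B)$-valued duals respects the bornologies. The essentiality of $M$ and $X$ together with the quasi-unitality of $A$ and $B$ are exactly what should absorb the approximate-unit artifacts here, in the same manner exploited in the proof of Proposition \ref{P:MoritaInvariantConvexity}. Once the diagram is in hand, the theorem follows at once from Proposition \ref{P:nuclear}.
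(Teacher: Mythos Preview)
Your core strategy---transport the nuclear witness for $1_M$ through the Morita data to obtain a nuclear witness for $1_N$, then invoke Proposition~\ref{P:nuclear}---is exactly the paper's approach. The paper packages this as a single commutative diagram with $M\tensA X\tensB Y\tensA\LA(M,A^+)$ in the upper-left corner: the top row is an isomorphism onto $\LA(M,M)$ (by essentialness and finite-projectivity of $M$), so $1_N$ traces back to the upper-left and then travels along the bottom through $N\tensB\LB(N,B)$, exhibiting its nuclearity directly. This sidesteps your invocation of Lemma~\ref{rem:dualmodule} and the explicit pairings; the bornological compatibilities you flag as obstacles are automatic because every arrow in the diagram is a canonical natural map.

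Your separate projectivity paragraph is both unnecessary and incomplete. Unnecessary because Proposition~\ref{P:nuclear} delivers finite generation \emph{and} projectivity simultaneously once $1_N$ is shown to be nuclear. Incomplete because the clause ``combining with essentiality of $X$ then exhibits $N$ as a direct summand of a relatively free $B^+$-module'' does not follow: the isomorphism $X\tensB B\cong X$ says nothing about $X$ being a direct summand of some $B^+\tens W$, and Definition~\ref{D:moritabimodule} does not assume $X$ is projective over $B^+$ (only $Y$ is assumed projective). You should drop that paragraph and let the nuclearity argument carry the full weight.
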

\begin{proof}
Let $M$ be a finite $A$-module.  Since
$X$ is an essential $B$-module, so is\linebreak $M\tensA X=:N$.  Now examine the following commutative diagram
$$\label{D:diagram1}
\xymatrix{ M\tensA X\tensB Y\tensA\LA(M,A^+) \ar[r] \ar[d] & M\tensA\LA(M,A^+) \ar[r] & \LA(M,M) \ar[d] \\
 N\tensB Y\tensA\LB(N,A^+\tensA X) \ar[r] &
 N\tensB\LB(N,B) \ar[r] & \LB(N,N)
}
$$
where the vertical maps are induced from $\_\tensA X$, the upper left map is induced from the map $M\tensA (X\tensB Y)\isom M\tensA A\to M$, and the lower left map is induced from the natural map $Y\tensA\LB(N,A^+\tensA~X)\to\LB(N,Y\tensA A^+\tensA X)=\LB(N,B)$.

The upper left map is an isomorphism by essentialness of $M$ and the upper right map is an isomorphism since $M$ is finitely generated and projective.  But then the identity map $id_N\in\LB(N,N)$ is the image of an element from the upper left corner.  But then by commutativity of the diagram, $id_N$ factors through $N\tensB\LB(N,B)$.  By Proposition \ref{P:nuclear}, $N=M\tensA X$ is a finitely generated projective $B^+$-module.
\end{proof}
\begin{rem}\label{R:imageIsEssential} In the proof above we used the fact that since $N$ is an essential $B$-module,
\[ \LB(N,B)=\LB(N,B^+). \]
To see that every $\phi\in\LB(N,B^+)$ indeed has its image in $B$, let $\mu:N\tensB B\to N$ denote the $B$-action, and note that $\phi$ factors as follows:
$$\label{D:diagram1}
\xymatrix{
N\tensB B\ar[r]^{\phi\otimes I}  & B^+\tensB B \ar[d] \\
N \ar[r]^\phi \ar[u]^{\mu^{-1}} & B^+.
}
$$
Since the multiplication map $B^+\tens B\to B^+$ has image in $B$, so does $\phi$.
\end{rem}

If an algebra is unital, then the condition of essentialness is automatically satisfied, so in this case a finite module is just a finitely generated projective module.  Thus Proposition \ref{P:MoritaUnitalCase} shows that for any algebra which is Morita equivalent to a unital algebra, the finite modules are the correct replacement for finitely generated projectives.

On the other hand, for algebras that are not Morita equivalent to a unital algebra, there may be no finite modules at all.  Indeed,
\begin{prop}\label{P:sweet}  Let $A$ be a nonunital algebra and let $X$ be a  finitely generated projective essential module such that the evaluation map \[ X\tens\LA(X,A)\lra A \]
is surjective.  Then $X$ induces a Morita equivalence between $A$ and the unital algebra $\LA(X,X).$
\end{prop}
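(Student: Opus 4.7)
The plan is to construct Morita data for $A$ and the unital bornological endomorphism algebra $B := \LA(X,X)$. View $X$ as a $(B,A)$-bimodule (left $B$-action by evaluation, right $A$-action as given) and set $Y := \LA(X,A)$ as an $(A,B)$-bimodule via $(a\phi)(x) := a\phi(x)$ and $(\phi T)(x) := \phi(Tx)$. The candidate Morita isomorphisms are the evaluation maps
\[ \mu_B \colon X \tensA Y \to B,\ x \tens \phi \mapsto (z \mapsto x\phi(z)); \qquad \mu_A \colon Y \tensB X \to A,\ \phi \tens x \mapsto \phi(x). \]
The task is to prove both are bornological isomorphisms and that $Y$ satisfies the essentialness and projectivity conditions of Definition \ref{D:moritabimodule}.

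By Proposition \ref{P:nuclear} applied to $X$ as an $A^+$-module, together with Remark \ref{R:imageIsEssential} (which, using essentialness of $X$, identifies $\Lc_{A^+}(X,A^+) = Y$), the identity admits a nuclear decomposition $1_X = \sum_j y_j \psi_j$ with $y_j \in X$, $\psi_j \in Y$; equivalently $\mu_B\bigl(\sum_j y_j \tens \psi_j\bigr) = 1_B$. Surjectivity of $\mu_B$ follows from $T = T \cdot 1_B = \mu_B\bigl(\sum_j Ty_j \tens \psi_j\bigr)$. For injectivity, take $\xi = \sum_i x_i \tens \phi_i$ with $\mu_B(\xi) = 0$, rewrite $x_i = \sum_j y_j \psi_j(x_i)$, and use $A$-balance to push the scalars $\psi_j(x_i) \in A$ across: $\xi = \sum_j y_j \tens \bigl(\sum_i \psi_j(x_i) \phi_i\bigr)$. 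But $\sum_i \psi_j(x_i) \phi_i = \psi_j \circ \mu_B(\xi) = 0$ in $Y$, so $\xi = 0$.

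Surjectivity of $\mu_A$ is exactly the hypothesis, and I expect injectivity of $\mu_A$ to be the main obstacle, since unlike on the $B$-side there is no unit to anchor a nuclear decomposition on $A$. My plan is to show that $\xi \cdot a = 0$ in $Y \tensB X$ for every $\xi \in \ker \mu_A$ and every $a \in A$, then conclude via an approximate identity. Given $\xi = \sum_i \phi_i \tens x_i$ with $\mu_A(\xi) = 0$ and a decomposition $a = \sum_k \phi_k'(x_k')$ provided by surjectivity of $\mu_A$, the key $B$-balance move uses the operator $T_{ik} := \mu_B(x_i \tens \phi_k') \in B$, which sends $z \mapsto x_i \phi_k'(z)$. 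Because $x_i \phi_k'(x_k') = T_{ik}(x_k')$, $B$-balance gives $\phi_i \tens x_i \phi_k'(x_k') = (\phi_i \circ T_{ik}) \tens x_k' = \phi_i(x_i) \phi_k' \tens x_k'$ (the last equality by right $A$-linearity of $\phi_i$). Summing and factoring out the left $A$-scalar yields $\xi \cdot a = \mu_A(\xi) \cdot \bigl(\sum_k \phi_k' \tens x_k'\bigr) = 0$. Since $X$ (and therefore $Y \tensB X$) is right $A$-essential and $A$ is quasi-unital with approximate identity $\{e_n\}$, one has $\xi = \lim_n \xi \cdot e_n = 0$.

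To complete the verification, $Y$ must be essential and projective as an $(A,B)$-bimodule. Right $B$-essentialness is automatic since $B$ is unital, and left $A$-essentialness comes from the same nuclear decomposition: $\phi = \phi \circ 1_X = \sum_j \phi(y_j) \psi_j$ exhibits $\phi \in Y$ as a left $A$-combination of the $\psi_j$. Projectivity of $Y$ as a left $A^+$-module follows from $X$ being a direct summand of $(A^+)^n$, which makes $Y = \Lc_{A^+}(X, A^+)$ a direct summand of $(A^+)^n$; the right $B$-module structure is then controlled by the now-established $\mu_B \colon X \tensA Y \isom B$. With these verifications in place, $X$ (with dual $Y$) realizes the Morita equivalence of Definition \ref{D:moritabimodule}.
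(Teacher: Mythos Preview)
Your proof is correct and in fact more thorough than the paper's. The paper's argument is very brief: it asserts $B\simeq X\tensA X^\vee$ ``since $X$ is finitely generated and projective'' (which is your $\mu_B$ isomorphism, proved by you explicitly via the nuclear decomposition $1_X=\sum_j y_j\psi_j$), and then for the injectivity of $\mu_A$ it appeals to the claim that $X$ and $X^\vee$ are \emph{cyclic} for $B$ (i.e.\ $X=B\cdot x$ and $X^\vee=\xi\cdot B$ for any nonzero $x,\xi$), asserting that injectivity ``can be checked directly'' from this. Your route is different: you show by a direct $B$-balance computation that any $\xi\in\ker\mu_A$ satisfies $\xi\cdot a=0$ for all $a\in A$, and then kill $\xi$ using essentialness of $Y\tensB X$ together with an approximate identity. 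This avoids having to establish the cyclicity claim, at the cost of invoking quasi-unitality of $A$ (which is not stated in the proposition but is in any case required for the conclusion to be meaningful per Definition~\ref{D:moritabimodule}). You also verify essentialness and projectivity of $Y$, which the paper omits entirely. The trade-off: the paper's cyclicity line is conceptually slick but left unproved, while your argument is longer but self-contained and makes the role of the nuclear decomposition and quasi-unitality transparent.
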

\begin{proof}
Let $X^\vee:=\LA(X,A)$ and define $B:=\LA(X,X)$.  Here $A$ acts on the right of $X$.  We claim that the pair $(X^\vee,X)$ is a Morita equivalence.  Indeed, $B \simeq X\tensA X^\vee$  since $X$ is finitely generated and projective.  By hypothesis the evaluation map $X^\vee\tens X\to A$ is surjective.  The evaluation descends to a map $X^\vee\tensB X\to A$, and this map can be checked directly to be injective because $X$ and $X^\vee$ are cyclic for $B$ (that is, for any nonzero $x\in X$ and $\xi\in X^\vee$ we have $X=B\cdot x$ and $X^\vee=\xi\cdot B$).  Thus $X^\vee\tensB X\simeq A$.
\end{proof}
Thus for example if $M$ is a noncompact manifold, then $C_c^\infty(M)$ admits no finitely generated projective essential modules.  Indeed, such a module would correspond to a vector bundle and would satisfy the hypotheses of the above proposition, thus inducing a Morita equivalence between $C_c^\infty(M)$ and a unital algebra.  But no such Morita equivalence can exist because a multiplicatively convex unital algebra has compact spectrum, whereas $M$ is noncompact.

Next we describe a weaker version of finiteness that will work whether an algebra is Morita equivalent to a unital algebra or not.

\subsection{Quillen's Method}\label{S:Quillen}

\begin{defn} Let $A$ be a quasi-unital bornological algebra.  A \textbf{homotopy-finite complex (or h-finite complex)} is a complex $M$ of bornological $A$-modules that is both homotopy equivalent to its essentialization $M\tensA A$, and homotopy equivalent to some complex of finitely generated projective $A$-modules. (Remember this means finitely generated projective as an $A^+$-module.)  \textbf{All chain complexes are assumed to be bounded in grading degree unless otherwise stated.}
\end{defn}
The class of algebraic h-finite complexes was introduced by Quillen \cite{Q}, and by characterizing algebraic h-finiteness in terms of nuclearity, this class was shown to be stable under Morita equivalence.  We will work out the necessary modifications for the bornological setting.

First, though, let us point out that there are plenty of h-finite complexes.
\begin{example}\label{E:Psharp}
Let $P$ be an $A$-module, and define
\[ P^\sharp:=(P/PA)\tens_\kb A^+. \]
Consider the diagram
$$
\xymatrix{  P \ar[r] & P/PA \ar[d]\\
  P^\sharp \ar[r] & P^\sharp/P^\sharp A
}
$$
where the vertical arrow is the canonical isomorphism.  If $P$ is finitely generated and projective, this can be filled in to a commutative diagram with a map
\[ \phi:P\lra P^\sharp. \]
Inverting the vertical map, we can get an analogous commutative diagram filled in by a map $\psi:P^\sharp\lra P$.  Then $E:=(P\stackrel{\phi}{\to}P^\sharp)$ is an h-finite complex.  Indeed, $E$ is made of finitely generated projective $A^+$-modules, and furthermore, the inclusion $EA\hookrightarrow E$ is checked to have homotopy inverse given by \[f:=id_E-[\phi,\psi]:E\to EA,\]
so $E$ is homotopy equivalent to its essentialization.

Thus every finitely generated projective module determines an h-finite complex.

This example applied to the commutative setting is interpreted as follows. Suppose that $A$ is the algebra of smooth compactly supported functions on a noncompact manifold M.  Then $P$ corresponds to a rank $r$ vector bundle on $M$ and $P^\sharp$ corresponds to the trivial rank $r$ bundle.  The map $P\lra P^\sharp$, corresponds to a map of vector bundles, which is an isomorphism at infinity.  In other words, $P\lra P^\sharp$ is a bundle with a fixed trivialization in a neighborhood of infinity.
\end{example}

\begin{defn}\label{D:homotopynuclear}  Given two chain complexes $M^\bullet,N^\bullet\in Ch(B)$, the graded maps form a complex of bornological spaces $\LB^\bullet(M,N)$ with differential
\[ d(h)=d_N\circ h-(-1)^ih\circ d_M,\qquad h\in\LB^i(M,N). \]
If $A$ is an ideal in $B$, the \textbf{$A$-nuclear maps} (from $M^\bullet$ to $N^\bullet$) are defined as the image of the morphism
\begin{equation}\label{E:nuclearcx}
N^\bullet\otimes_B\Lc_B^\bullet(M,A)\lra\Lc_B^\bullet(M,N);\quad
(n\otimes \phi)(m):=n\cdot\phi(m).
\end{equation}
Those maps coming from the algebraic tensor product are called \textbf{algebraically nuclear}.
\end{defn}
A morphism of complexes is $A$-nuclear exactly when its graded components are.
\begin{defn}
A morphism $f\in\LB^k(M,N)$ is called \textbf{homotopy-$A$-nuclear} when it is homotopic to an $A$-nuclear map, that is, when there exists an $h\in\LB^{k-1}(M,N)$ and an $A$-nuclear $g$ satisfying $f-g=d(h)$.
\end{defn}
The following two propositions are generalizations to the bornological setting of Propositions 1.1 and Lemma 2.1 of \cite{Q}.  These will be used to characterize h-finite complexes in terms of homotopy nuclearity.
\begin{prop}\label{P:honuclear} Let $B$ be a unital multiplicatively convex bornological algebra.  For a complex $M^\bullet\in\Ch(B)$, the following are equivalent.
\begin{enumerate}
\item $M^\bullet$ is homotopy equivalent to a complex of finitely generated projective $B-$modules.\label{one}
\item $M^\bullet\tensB\LB^\bullet(M,B)\lra\LB^\bullet(M,M)$ is a homotopy equivalence.\label{two}
\item $id_M\in\LB^\bullet(M,M)$ is homotopy equivalent to a nuclear map.\label{three}
\item $id_M\in\LB^\bullet(M,M)$ is homotopy equivalent to an \textbf{algebraically} nuclear map.\label{four}
\item There is a finite complex of finitely generated projective modules $T$ and chain maps $M\stackrel{f}{\to}T\stackrel{g}{\to}M$ such that $gf$ is homotopic to $id_M$.\label{five}
\end{enumerate}
\end{prop}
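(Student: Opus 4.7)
The plan is a cyclic implication $(1)\Rightarrow(2)\Rightarrow(3)\Rightarrow(4)\Rightarrow(5)\Rightarrow(1)$. Two of these are immediate: $(5)\Rightarrow(1)$ is tautological, and $(4)\Rightarrow(3)$ holds because the algebraic tensor product maps canonically into the completed one, so any algebraically nuclear map is automatically nuclear.

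For $(1)\Rightarrow(2)$, I would first observe that when $P$ is a single finitely generated projective $B$-module, Proposition \ref{P:nuclear} gives that $P\tensB\LB(P,B)\to\LB(P,P)$ is an isomorphism. Extending bifunctorially, for any bounded complex $P^\bullet$ of finitely generated projectives the corresponding map is an isomorphism on every bidegree of the associated bicomplexes, hence an isomorphism of totalizations. Homotopy invariance of both $M\tensB\LB^\bullet(M,B)$ and $\LB^\bullet(M,M)$ in the $M$-variable then transports this to a homotopy equivalence $\mu_M$ for any $M$ homotopy equivalent to such $P^\bullet$. The implication $(2)\Rightarrow(3)$ is then essentially formal: $\mathrm{id}_M$ is a zero-cycle in $\LB^\bullet(M,M)$, so up to boundary it is the $\mu_M$-image of a cycle in $M\tensB\LB^\bullet(M,B)$, and that image is by definition a nuclear map.

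The main obstacle is $(3)\Rightarrow(4)$. Here I would exploit multiplicative convexity of $B$ to write a nuclear witness $\xi\in M\tensB\LB^\bullet(M,B)$ as a bornological limit of algebraic partial sums $\xi_N$ lying in the uncompleted tensor product. Given the relation $\mathrm{id}_M-\mu(\xi)=dh+hd$, one has $\mathrm{id}_M-\mu(\xi_N)=\epsilon_N+dh+hd$ with $\epsilon_N:=\mu(\xi-\xi_N)$ of arbitrarily small spectral radius for $N$ large. Invertibility of $\mathrm{id}_M-\epsilon_N$ via the Neumann series $\sum_k\epsilon_N^k$, combined with the fact that $\xi$ may be chosen a $d$-cycle (since $\mathrm{id}_M$ is), allows one to absorb the error $\epsilon_N$ into a revised algebraically nuclear map and a revised chain homotopy. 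The delicate point, which I expect to be the technical heart of the argument, is keeping the Neumann inverse compatible with the differential up to a further bounded chain homotopy; this is precisely where the multiplicative convexity hypothesis (giving finite spectral radii on bounded sets, cf.\ Lemmas \ref{L:LFborno1}--\ref{L:LFborno2}) is indispensable, since completeness alone would not guarantee summability of the geometric series.

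Finally, for $(4)\Rightarrow(5)$, an algebraically nuclear representative $\sum_{i=1}^k m_i\tens\phi_i$ homotopic to $\mathrm{id}_M$ is a \emph{finite} sum, so one can assemble a bounded complex $T$ of finitely generated projective $B$-modules as a direct sum of appropriately shifted copies of $B$, one per index $i$, with differential determined by requiring that $f:M\to T$, whose components are the $\phi_i$, and $g:T\to M$, whose components are multiplication by the $m_i$, be chain maps. Then $gf=\sum_i m_i\tens\phi_i$ is by hypothesis homotopic to $\mathrm{id}_M$, closing the cycle. This step mirrors the construction in Example \ref{E:Psharp}, adapted to the complex setting.
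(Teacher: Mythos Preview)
Your cycle has a genuine gap at $(5)\Rightarrow(1)$, which you call ``tautological.'' It is not. Condition (5) only says $M$ is a \emph{homotopy retract} of a perfect complex $T$ (i.e., $gf\sim\mathrm{id}_M$), not that $M$ is homotopy equivalent to one. To upgrade a retract to an equivalence one must split the homotopy idempotent $fg$ on $T$, and this requires real work: the paper invokes Ranicki's explicit construction (Proposition 3.2 of \cite{Ran}), which builds a new perfect complex $T'$ out of finite sums of shifts of $T$ together with maps $f',g',h'$ satisfying both $g'f'\sim\mathrm{id}_M$ \emph{and} $f'g'\sim\mathrm{id}_{T'}$. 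Without this, your cycle is broken. (Relatedly, your remark that ``$(4)\Rightarrow(3)$ holds'' is true but irrelevant---that implication is not in your cycle.)

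Your $(4)\Rightarrow(5)$ is also underspecified. You propose $T=\bigoplus_i B[n_i]$ with ``differential determined by requiring that $f$ and $g$ be chain maps,'' but given arbitrary $m_i$ and $\phi_i$ there is no such differential in general; the actual construction (Quillen, Prop.~1.1 of \cite{Q}) is more delicate and the paper simply cites it. Your $(3)\Rightarrow(4)$ has the right architecture---decompose a nuclear map as (finite rank) $+$ (small), invert $\mathrm{id}-\epsilon$ via a Neumann series using multiplicative convexity---and matches the paper's approach, though the paper is explicit where you are vague: it shows $\mathrm{id}_M-\{\phi F-d(\phi)h\}=d(\phi h)$ with $\phi=(\mathrm{id}-\epsilon)^{-1}$, and checks $d(\phi)=\phi\,d(\phi^{-1})\,\phi$ is finite rank because $d(\phi^{-1})=d(F)$ is. One subtlety you elide is that multiplicative convexity is applied not to $B$ directly but to $M\tensB\LB(M,B)$, which the paper justifies via Morita invariance of multiplicative convexity (Proposition \ref{P:MoritaInvariantConvexity}).
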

\begin{proof} \eqref{one}$\Rightarrow$\eqref{two}$\Rightarrow$\eqref{three}.  Clear.

\eqref{three}$\Rightarrow$\eqref{four}.
Suppose
\[ id_M-f=d(h), \text{ for } f\in M\tensB\LB(M,B)\text{ and } h\in\LB(M,M). \]
We may assume $f$ and $h$ are homogeneous of degrees $0$ and $-1$ respectively.  Houzel (\cite{H}) points out that every $f\in M\tensB\LB(M,B)$ can be written as
\[ f=\sum\lambda_nx_n\tens\phi_n\]
where $\{x_n\}\subset M$ and $\{\phi_n\}\subset\LB(M,B)$ are sequences converging to zero and $\{\lambda_n\}\in\ell^1(\kb)$, and that consequently $f$ may be written
\[ f=\eps + F \]
where $F$ is of finite rank (i.e.\! is algebraically nuclear) and $\eps$ is so small that $id_M-\eps$ is invertible.  Here we know that the sum $\sum\eps^n$ converges because $M\tensB\LB(M,B)$ is Morita equivalent to $B$ and is therefore multiplicatively convex by \ref{P:MoritaInvariantConvexity}.  Let $\phi:=(id_M-\eps)^{-1}$.  Then we have
\[ id_M-\phi F=\phi\cdot(id_M-\eps-F)=\phi\cdot(id_M-f)=\phi d(h)=d(\phi h)-d(\phi)h \]
so
\[ id_M - \{ \phi F - d(\phi)h \}=d(\phi h), \]
meaning that the identity is homotopic to $\phi F - d(\phi)h$.  We claim that $\phi F - d(\phi)h$ is actually of finite rank.  First note that a composition of maps is finite rank if one of the maps is, so in particular $\phi F$ is finite rank.  But also $d(\phi)$ and thus $d(\phi)h$ is finite rank.  To see this, note that
\[ 0=d^2(h)=d(id_M-f)=d(\phi^{-1}-F) \]
so that $d(\phi^{-1})=d(F)$.  Then $d(F)=d(\sum_{n=1\dots N}\lambda_nx_n\tens\phi_n)$ is clearly finite rank. Finally, $d(\phi)$ is a composition with one factor being finite rank: $d(\phi)=\phi d(\phi^{-1})\phi$, so $d(\phi)$ is finite rank.  Thus homotopy nuclear implies algebraically homotopy nuclear, as desired.

\eqref{four}$\Rightarrow$\eqref{five}.  The $f$ and $g$ and the homotopy $h$ making $id_M-d(h)=gf$ that are constructed in Proposition 1.1 of \cite{Q} are obviously bounded maps, therefore apply to the bornological setting.

\eqref{five}$\Rightarrow$\eqref{one}.  Proposition (3.2) of \cite{Ran} shows this result algebraically by constructing a new complex $T'$ out of finite sums of shifts of $T$, and new chain maps
\[ M\stackrel{f'}{\to}T'\stackrel{g'}{\to}M, \]
and a graded map
\[ T'^\bullet\stackrel{h'}{\to} T'^{\bullet-1} \]
satisfying
\begin{enumerate}
\item[(i)] $g'f'=gf=id_M-d(h)$
\item[(ii)] $f'g'=1_{T'}-d(h').$
\end{enumerate}
The three maps $f'$, $g'$ and $h'$ are finite sums of finite compositions of shifts of $f$, $g$ and $h$.  Thus
$M$ is homotopy equivalent to the finite complex of finitely generated projectives $T'$, as desired, and since $f$, $g$, and $h$ are bounded maps so are $f'$, $g'$ and $h'$.
\end{proof}
\begin{prop}\label{P:hoAnuclear}
Let $A$ be a quasi-unital ideal in a unital bornological algebra $B$.  Then for a complex $M$ of projective $B$-modules, the following are equivalent.
\begin{enumerate}
\item $M^\bullet\tensB A\lra M^\bullet$ is a homotopy equivalence.
\item $id_M\in\LB^\bullet(M,M)$ is homotopic to a map with image in $MA$.
\end{enumerate}
\end{prop}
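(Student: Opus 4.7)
The plan is to reduce everything to a key identification: for $M$ a projective $B$-module, the natural map $\mu : M \otimes_B A \to M$ is injective with image $MA$, so one may identify $M \otimes_B A$ with $MA$ and $\mu$ with the inclusion $\iota : MA \hookrightarrow M$. I would verify this first for a free module $M = B \tens V$ (recall $B^+ = B$ since $B$ is unital by hypothesis), where the balancing relation $b b' \tens a = b \tens b' a$ shows every element of $B \otimes_B A$ has a unique representative of the form $1 \tens a$, giving a bornological isomorphism $B \otimes_B A \cong A$; hence $(B \tens V) \otimes_B A \cong A \tens V$, and $\mu$ becomes the inclusion $A \tens V \hookrightarrow B \tens V$ whose image is exactly $(B \tens V) A = MA$. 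The projective case then follows by passing to direct summands, and the chain complex case degree-wise (the inclusion commuting with the $B$-linear differential).

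Granting this identification, the implication $(1) \Rightarrow (2)$ is immediate: a homotopy inverse $\psi : M \to MA$ to $\iota$ satisfies $\iota \psi \simeq \mathrm{id}_M$, and the composite $\iota \psi$ manifestly has image in $MA$.

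The heart of the proof is $(2) \Rightarrow (1)$. Suppose $\mathrm{id}_M - f = [d, h]$ with $f : M \to M$ having image in $MA$, and factor $f = \iota \circ f'$ where $f' : M \to MA$. Then $\iota f' = f \simeq \mathrm{id}_M$ already supplies a right homotopy inverse to $\iota$. To produce the left homotopy inverse, I would apply the additive bornological functor $F := (-) \otimes_B A$ to the chain homotopy, obtaining $\mathrm{id}_{F(M)} - F(f) = [d, F(h)]$. Under the identification $F(M) = MA$, the map $F(f) : MA \to MA$ is given by $ma \mapsto f(m) a$, which by $B$-linearity of $f$ equals $f(ma)$, so $F(f)$ is the restriction $f|_{MA}$; using the factorization $f = \iota f'$, this restriction coincides with $f' \circ \iota$. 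Hence $f' \iota \simeq \mathrm{id}_{MA}$, and combined with $\iota f' \simeq \mathrm{id}_M$ this shows that $\iota$ (equivalently $\mu$) is a chain homotopy equivalence.

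The main technical point requiring care is the compatibility of bornologies in the identification $M \otimes_B A \cong MA$: the algebraic bijection described above is bounded, and its inverse $a \mapsto 1 \tens a$ is visibly bounded, so one genuinely obtains a bornological isomorphism of chain complexes; the additivity and bornological functoriality of $F = (-)\otimes_B A$ then ensure that chain homotopies are transported in the required sense. Everything else is a routine reshuffling once the identification is in hand.
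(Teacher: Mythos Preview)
Your proof is correct and follows essentially the same route as the paper: both directions hinge on the identification $M\otimes_B A\cong MA$ (which the paper simply asserts, while you justify it for free and then projective modules), and in $(2)\Rightarrow(1)$ both arguments show that the homotopy $h$ restricts to $MA$. The paper's version is slightly more direct there: rather than applying the functor $(-)\otimes_B A$ and re-identifying, it just observes that since $h$ is $B$-linear and $A\subset B$ is an ideal, $h(ma)=h(m)a\in MA$, so $h$ (and $f$) restrict to $MA$ and the equation $\mathrm{id}_M-f=d(h)$ restricts verbatim.
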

\begin{proof} Suppose (1) holds, then choose a homotopy inverse $f$ for the inclusion $MA\simeq M\tensB A\incl M$.  By definition there is an $h$ such that $id_M-f=d(h)$, so (2) is true.  Now suppose (2) is true.  Choose an $f$ with image in $MA$ and an $h$ so that $id_M-f=d(h)$ holds.  Then $f$ is a homotopy inverse for the inclusion of (1).  One need only note that since $h$ is $A$-linear it takes $MA$ into itself and thus is a homotopy operator for $MA$ as well as for $M$.
\end{proof}
\begin{cor}\label{C:hfinite} Let $A$ be a quasi-unital multiplicatively convex algebra, then a complex $M^\bullet\in\Ch(A)$ is h-finite if and only if the identity map $id_M\in\LA^\bullet(M,M)$ is homotopic to an A-nuclear map.
\end{cor}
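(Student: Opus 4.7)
The plan is to deduce the corollary by bundling the two preceding propositions into a single statement. Proposition \ref{P:honuclear} handles the ``complex of finitely generated projectives'' half of h-finiteness via nuclearity of $id_M$, while Proposition \ref{P:hoAnuclear} handles the ``essentialness'' half via $id_M$ being homotopic to a map with image in $MA$. The bridge from these two conditions to the single condition of $A$-nuclearity is the elementary observation that, since $A$ is a two-sided ideal in $A^+$, any $A^+$-linear map $\phi:M\to A^+$ automatically sends $MA$ into $\phi(M)\cdot A\subset A$.

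For the forward implication, assume $M$ is h-finite. Proposition \ref{P:honuclear} applied with $B=A^+$ produces a nuclear representative $f=\sum_i x_i\otimes\phi_i\in M\otimes_{A^+}\mathcal{L}_{A^+}(M,A^+)$ such that $id_M\sim f$. Independently, letting $\mu:M\otimes_{A^+}A\to M$ denote the essentialization map and $\nu$ a homotopy inverse, the chain map $g:=\mu\nu$ is homotopic to $id_M$ and has image in $MA$. I would then pass to the composite $fg$: it is a chain map, the homotopies combine in the usual way to give $id_M\sim fg$, and
\[
fg=\sum_i x_i\otimes(\phi_i\circ g).
\]
Since $g$ factors through $MA$, we have $\phi_i\circ g\in\mathcal{L}_{A^+}(M,A)$, so $fg$ is $A$-nuclear.

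For the reverse implication, assume $id_M\sim f$ with $f=\sum_i x_i\otimes\phi_i$ and $\phi_i\in\mathcal{L}_{A^+}(M,A)$. Viewed through $A\hookrightarrow A^+$, $f$ is in particular nuclear, so Proposition \ref{P:honuclear} already supplies a homotopy equivalence between $M$ and a complex of finitely generated projective $A^+$-modules. It remains to show that the essentialization $M\otimes_{A^+}A\to M$ is itself a homotopy equivalence. By quasi-unitality this map is injective with image $MA$, and $f$ has image in $\sum_i x_i\cdot A\subset MA$. Writing $id_M-f=d(h)$ and using that right $A^+$-linearity of $h$ forces $h(MA)\subset MA$, the factorization $f=\iota\circ\tilde f$ through the inclusion $\iota:MA\hookrightarrow M$ yields, upon restriction of $h$ to $MA$, a homotopy $\tilde f\iota\sim id_{MA}$. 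Combined with $\iota\tilde f=f\sim id_M$, this makes $\iota$ a homotopy equivalence, so $M$ is h-finite.

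The main obstacle, and really the content of the corollary, is the bookkeeping that collapses the two defining conditions of h-finiteness into the single $A$-nuclearity condition. The hypotheses are used exactly where one expects: multiplicative convexity is inherited from the invocation of Proposition \ref{P:honuclear} (through its proof, which needs the series $\sum\varepsilon^n$ to converge), and quasi-unitality is needed in the reverse direction to make $M\otimes_{A^+}A\hookrightarrow M$ injective and to let one restrict the homotopy $h$ to the submodule $MA$.
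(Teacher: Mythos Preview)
Your argument is correct and the reverse direction matches the paper essentially verbatim (invoke Proposition~\ref{P:honuclear} for the finitely-generated-projective half, then observe that an $A$-nuclear map has image in $MA$ and run the argument of Proposition~\ref{P:hoAnuclear} for the essentialness half).

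The forward direction is where you diverge. The paper organizes the step into a commutative square
\[
\xymatrix{  M^\bullet\tensA A\tensA\LA^\bullet(M,A^+)\ar[r]\ar[d] &  M^\bullet\tensA\LA^\bullet(M,A^+)\ar[d]\\
  M^\bullet\tensA\LA^\bullet(M,A) \ar[r] & \LA^\bullet(M,M)
}
\]
in which the top arrow is a homotopy equivalence by homotopy-essentialness and the right arrow is a homotopy equivalence by Proposition~\ref{P:honuclear}; chasing $id_M$ backward and then around the lower-left route produces an $A$-nuclear representative. Your approach is more hands-on: you take a nuclear representative $f$ of $id_M$ and an essentialization representative $g$, and simply compose them, noting that $\phi_i\circ g$ lands in $A$ because $A$ is an ideal in $A^+$. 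This is a perfectly good shortcut and arguably more transparent for this single statement. The payoff of the paper's diagrammatic formulation is that the same square, with $X^\bullet\tensB Y^\bullet$ in place of $A$, is exactly what is needed for Theorem~\ref{T:dgMorita}; so the diagram is set up once and reused. Your composition argument would also adapt to that theorem, but less visibly.

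One small quibble: the restriction $h(MA)\subset MA$ follows just from $A$-linearity of $h$, not from quasi-unitality; quasi-unitality is only used for the injectivity of $M\tensA A\to M$.
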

\begin{proof}  This is essentially the same as the proof of Theorem 2.4 of (\cite{Q}).  Suppose $id_M$ is homotopy-nuclear.  Then by Proposition \ref{P:honuclear} $M$ is homotopy equivalent to a finite projective complex, and by Proposition \ref{P:hoAnuclear} $M$ is homotopy-essential, thus $M$ is h-finite.  Conversely, suppose $M$ is h-finite.  Then in the following commutative square of canonical maps
$$
\xymatrix{  M^\bullet\tensA A\tensA\LA^\bullet(M,A^+)\ar[r]\ar[d] &  M^\bullet\tensA\LA^\bullet(M,A^+)\ar[d]\\
  M^\bullet\tensA\LA^\bullet(M,A) \ar[r] & \LA^\bullet(M,M)
}
$$
the top map is a homotopy equivalence since $M$ is homotopy essential, and the right map is a homotopy equivalence by Proposition \ref{P:honuclear}.  But then the lower left route is a homotopy equivalence too, and in particular $id_M$ is homotopic to a map coming from $M\tensA\LA(M,A)$.
\end{proof}

Write \textbf{hFin(A)} for the category whose objects are h-finite complexes of $A$-modules and whose morphisms from $M$ to $N$ are $\Lc^\bullet(M,N)$.  This category is enriched in chain complexes of bornological vector spaces, so it is a dg-category with an extra bornological bit, which we call a \textbf{dgb-category}.  A functor between dgb-categories will mean for us a dgb-enriched functor, that is a functor whose maps on hom sets are dgb-morphisms.

Let $\Ho(\hFin(A))$ be the category with the same objects as $\hFin(A)$ and with morphisms $\Hom(M,N):=H^0(\LA^\bullet(M,N)).$  A functor $F:\Cc\to\Dc$ of dgb-categories is called a \textbf{quasi-equivalence} if
\begin{itemize}
\item[(i)] For all $M,N\in\Cc$, $F_{M,N}:\Hom_\Cc(M,N)\to\Hom_\Dc(FM,FN)$ is a quasi-isomorphism, and \item[(ii)] $\Ho(F):\Ho(\Cc)\to\Ho(\Dc)$ is an equivalence of categories.
\end{itemize}
Now given a Morita $(A-B)-$bimodule $X$, we have the functor
\[ (\_\_\;)\tensA X:\Mod(A)\to\Mod(B). \]
We want to see what happens when this is applied to $\hFin(A).$  One cannot hope to get an equivalence of categories between $\hFin(A)$ and $\hFin(B)$ because anything in the image of $(\_\_\;)\tensA X$ is essential, while there are h-finite complexes that are not essential (such as those in example \ref{E:Psharp}).  However, $X$ does induce a quasi-equivalence (see Theorem \ref{T:dgMorita}).  Typically one is only interested in the quasi-equivalence type of a dg-category anyway, so this is a satisfactory result. On the other hand, a Morita bimodule is more than one needs to get a quasi-equivalence, which motivates the following useful notion:
\begin{defn}
A complex of $(A\!-\! B)-$bimodules $X^\bullet$ is a \textbf{homotopy-Morita bimodule} (or h-Morita bimodule) if there is a dual $(B\!-\! A)-$bimodule $Y^\bullet$ and homotopy equivalences
\[
A\approx X^\bullet\tensB Y^\bullet \:\text{ and }\:
B\approx Y^\bullet\tensA X^\bullet.
\]
\end{defn}

\begin{thm}\label{T:dgMorita} Let $X^\bullet$ be an h-Morita $(A-B)-$bimodule.  Then when $A$ and $B$ are quasi-unital $X^\bullet$ induces a dgb-quasi-equivalence
\[ (\_\_\;)\tensA X^\bullet:\hFin(A)\to\hFin(B) \]
\end{thm}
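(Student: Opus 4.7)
The plan is to use the dual bimodule $Y^\bullet$ to construct a candidate inverse functor $G := \_\tensB Y^\bullet$, and then to verify three ingredients of a dgb-quasi-equivalence for $F := \_\tensA X^\bullet$: preservation of h-finiteness, quasi-isomorphism on hom complexes, and essential surjectivity on homotopy categories.

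First, preservation of h-finiteness. For an h-finite $M^\bullet$, homotopy-essentialness of $FM = M\tensA X^\bullet$ over $B$ follows directly from the h-Morita equivalences:
\[ FM\tensB B \;\approx\; M\tensA X^\bullet\tensB Y^\bullet\tensA X^\bullet \;\approx\; M\tensA A\tensA X^\bullet \;\approx\; FM, \]
where the last step uses homotopy-essentialness of $M$. For the finite-projectivity condition I apply Corollary \ref{C:hfinite}: since $id_M$ is homotopic to a map in the image of $M\tensA\LA^\bullet(M, A) \to \LA^\bullet(M,M)$, I must produce a corresponding $B$-nuclear presentation of $id_{FM}$. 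The key construction is a transfer map
\[ \Theta: M\tensA\LA^\bullet(M, A) \;\longrightarrow\; (M\tensA X^\bullet)\tensB\LB^\bullet(M\tensA X^\bullet, B), \]
built from the homotopy equivalence $X^\bullet\tensB Y^\bullet \approx A$ together with the pairing $Y^\bullet\tensA X^\bullet \to B$ coming from the h-Morita structure; explicitly, lifting $\phi\in\LA^\bullet(M,A)$ via a homotopy inverse of the multiplication $X^\bullet\tensB Y^\bullet\to A$ to a sum $\phi \rightsquigarrow \sum x_\alpha\otimes y_\alpha\otimes\phi$ and then sending each summand to $(m\otimes x_\alpha)\otimes\bigl((m'\otimes x')\mapsto y_\alpha\cdot(\phi(m')\cdot x')\bigr)$. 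One checks that $\Theta$ is compatible, up to homotopy, with the evaluation maps into $\LA^\bullet(M,M)$ and $\LB^\bullet(FM,FM)$, so that the image of a homotopy-nuclear lift of $id_M$ gives a homotopy-nuclear lift of $id_{FM}$.

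Second, quasi-isomorphism on hom complexes. Write $\alpha_F:\LA^\bullet(M, N)\to \LB^\bullet(FM, FN)$ for the map induced by $F$, and analogously $\alpha_G$ for $G$. Since $GF$ is naturally homotopic to the identity on essential complexes (using $X^\bullet\tensB Y^\bullet\approx A$ together with homotopy-essentialness of h-finite complexes), conjugation by the homotopy equivalences $M\approx GFM$ and $N\approx GFN$ makes $\alpha_G\circ\alpha_F$ homotopic to the identity on $\LA^\bullet(M, N)$. This exhibits $\alpha_F$ as a split monomorphism up to homotopy, hence injective on cohomology. A symmetric argument using $FG\sim id$ on h-finite $B$-complexes gives surjectivity, so $\alpha_F$ is a quasi-isomorphism.

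Third, essential surjectivity of $\Ho(F)$. Applying the first step with the roles of $X^\bullet$ and $Y^\bullet$ reversed shows that $G$ also preserves h-finite complexes; for any $N^\bullet\in\hFin(B)$ one has $GN\in\hFin(A)$ and $FGN\approx N$, placing $N$ in the essential image of $\Ho(F)$. The main obstacle is the construction of $\Theta$ in the first step: the formula is natural enough, but verifying that it descends to the balanced tensor products, respects the bornological completions, and is compatible with the two evaluation maps only up to explicit coherent homotopies derived from the equivalence $X^\bullet\tensB Y^\bullet\approx A$ is where the real work lies.
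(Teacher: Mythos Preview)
Your strategy is sound and lands in the same place as the paper, but the paper avoids your explicit transfer map $\Theta$ entirely by packaging everything into one commutative diagram of chain complexes
\[
\xymatrix{ M^\bullet\tensA X^\bullet\tensB Y^\bullet\tensA\LA^\bullet(M,A^+) \ar[r] \ar[d] &
 M^\bullet\tensA\LA^\bullet(M,A^+) \ar[r] & \LA^\bullet(M,M) \ar[d] \\
 N^\bullet\tensB Y^\bullet\tensA\LB^\bullet(N,A^+\tensA X) \ar[r] &
 N^\bullet\tensB\LB^\bullet(N,B) \ar[r] & \LB^\bullet(N,N)
}
\]
in which the top arrows are homotopy equivalences (by h-Morita plus h-finiteness of $M$), so $1_N$ is homotopic to something factoring through $N^\bullet\tensB\LB^\bullet(N,B)$; Corollary~\ref{C:hfinite} then gives h-finiteness of $N$ in one stroke. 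This sidesteps exactly the work you flag at the end---descent to balanced tensor products and compatibility of $\Theta$ with evaluation maps up to coherent homotopy---because the diagram commutes on the nose, not merely up to homotopy. Note also that your separate verification of homotopy-essentialness of $FM$ is redundant: Corollary~\ref{C:hfinite} already encodes both halves of the h-finite condition. On the other hand, your second and third steps (quasi-isomorphism on hom complexes and essential surjectivity via $G$) are spelled out more carefully than in the paper, which simply asserts that $Y$ provides a quasi-inverse; your argument there is fine and fills a gap the paper leaves implicit.
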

\begin{proof} Let $Y^\bullet$ be the dual module for $X^\bullet$, $M^\bullet\in\hFin(A)$, and set $N^\bullet:=M^\bullet\tensA X^\bullet$.
We have a commutative diagram analogous to the diagram of
Proposition \ref{P:MoritaUnitalCase},
$$
\xymatrix{ M^\bullet\tensA X^\bullet\tensB Y^\bullet\tensA\LA^\bullet(M,A^+) \ar[r] \ar[d] &
 M^\bullet\tensA\LA^\bullet(M,A^+) \ar[r] & \LA^\bullet(M,M) \ar[d] \\
 N^\bullet\tensB Y^\bullet\tensA\LB^\bullet(N,A^+\tensA X) \ar[r] &
 N^\bullet\tensB\LB^\bullet(N,B) \ar[r] & \LB^\bullet(N,N)
}
$$
only this time it is a diagram of chain complexes and chain
morphisms and the top arrows are homotopy
equivalences instead of isomorphisms.  This implies that the
identity $1_N\in\LB^\bullet(N,N)$ is homotopy equivalent to a map
$f$ coming from $M^\bullet\tensA X^\bullet\tensB
Y^\bullet\tensA\LA^\bullet(M,A^+)$.  Then by commutativity of the
diagram, $f$ factors though $N^\bullet\tensB\LB^\bullet(N,B)$.
By Corollary \ref{C:hfinite}, $N^\bullet$ is h-finite.  Thus $X$ takes h-finites to h-finites, and the reverse map induced by $Y$ is a quasi-inverse.
\end{proof}

\section{The homotopy nuclear perfect category}\label{S:psafinally}

Having described a workable replacement for the finitely generated projective modules over a nonunital bornological algebra, that is the h-finite complexes, we can finally return to nonunital curved dga's and their modules.  These modules will be the analogue of objects of the dg-category $\psa$ over a unital curved dga as defined in \cite{Bl1}, now adapted to the nonunital bornological setting.  They will describe, for example, modules over the twisted De Rham and Dolbeault dga's.

From now on all algebras and modules will be assumed bornological and all maps between them will be bounded maps.  For the sake of brevity, however, we will usually not mention the word bornological anymore.

\begin{defn}\label{D:curveddgastuff} A \textbf{bornological curved dga} is a triple $\As=(\Adot,d,c)$, where $\Adot$ is a quasi-unital multiplicatively convex graded bornological algebra and
\[d:\A^\bullet\to \A^{\bullet+1}\]
is a bounded map satisfying
\begin{itemize}
\item[(i)] $d(ab) = d(a)b + (-1)^{|a|}ad(b)$
\item[(ii)]$d^2(a)=[c,a];\quad c\in M(\A)^2$
\item[(iii)] $dc=0$.
\end{itemize}
Here $M(\A)^\bullet$ is the multiplier algebra of $\Adot$.  The derivation $d$ is defined on $M(\A)^\bullet$ by the formula $(dc)a:=d(ca)-cda$ for $a\in\Adot$.  As usual, we write $\A$ for the degree zero component $\A^0$.  If condition (iii) is not satisfied we call $\As$ a \textbf{really curved bornological dga}.

The \textbf{tensor product} of two curved dga's is
\[ \As\tens\Bs := (\Adot\tens\Bdot,d_\Ac\tens 1+1\tens d_\Bc,c_\Ac\tens 1+1\tens c_\Bc) \]
and the \textbf{opposite} of a curved dga is
\[ \As^{\opp} := ({\Adot}^{\opp},d_\Ac,-c_\Ac). \]
A \textbf{morphism} between two curved dga's $\As=\Adc$ and $\Bs=\Bdc$ is a pair $(f,\omega)$ where $f:\Adot\to\Bdot$ is a morphism of graded algebras, $\omega\in M(\Bc)^1$, and the pair satisfies
\begin{itemize}
\item[(i)]$ f(d_\Ac a)=d_\Bc(f(a))+[\omega,f(a)]$
\item[(ii)]$f(c_\Ac)=c_\Bc+d_\Bc(\omega)+\omega^2\in M(\Bc)^2$.
\end{itemize}
\end{defn}
\begin{defn}\label{D:Zconnection}
A \textbf{(right) module} over a curved dga $\As$ is a pair $E=(E^\bullet,d^E)$ in which $E^\bullet$ is a complex of essential right $\A$-modules and $d^E$ is a \textbf{$\Zb$-connection}.  A $\Zb$-connection is a $k$-linear map
\[d^E: E^\bullet\tensAc\A^{\bullet}\to E^\bullet\tensAc\A^{\bullet}  \]
of total degree one, satisfying
\begin{itemize}
\item[(i)] $d^E(e\tensAc \omega)=d^E(e)\omega+(-1)^{|e|} e\tensAc d\omega$
\item[(ii)]$d^E\circ d^E(e\tensAc a)=-e\tensAc ac$
\end{itemize}
Condition (ii) expresses the requirement that $E$ have \textbf{curvature} $c$; the minus sign occurs because we are dealing with right modules.
\end{defn}
Note that $d^E$ is determined by its values on $E^\bullet$, and decomposes as a sum $d^E=\sum_{k\geq 0}d^E_k$ where
\[ d^E_k:E^\bullet\to E^{\bullet+1-k}\tensAc\A^k \]
and the $d^E_k$ are $\Adot-$linear for all $k\neq 1$.  Because $(d^E_0)^2=0$, $(E^\bullet,d^E_0)$ is a chain complex over $\A$.
For any such module $E$, the script font $\Ec$ will denote the $\Adot-$module
\[  \Ec^\bullet:=\tot(E^\bullet\tensAc\A^{\bullet}).  \]
The morphisms between two $\As$-modules $E=(E^\bullet,d^E)$ and $F=(F^\bullet,d^F)$ is the complex of $\Adot$-(graded)-linear maps from $\Ec$ to $\Fc$,
\[
\Hom_{\negthickspace\As}^q(E,F)=\LAdot^q(\Ec,\Fc):=\{\phi\in\Lc(\Ec^\bullet,\Fc^{\bullet+q})\ |\ \phi(ea)=\phi(e)a,\quad a\in\A^\bullet\}
\]
with the differential $d^{EF}$ defined in the standard way,
\[
d^{EF}(\phi)(e)= d^F(\phi(e))-(-1)^{\abs{\phi}}\phi(d^Ee).
\]
The curvings on $E$ and $F$ cancel each other so $d^{EF}$ does indeed square to zero.  These modules and their morphism complexes form a dgb-category, which we denote $\mathbf{Mod(\As)}$.

In any dg-category, two morphisms
\[  f,g\in Hom(E,F) \]
are called homotopic when they are closed and their difference is a boundary.  Then the two objects $E$ and $F$ are called homotopy equivalent when there are closed degree zero morphisms
\[ f:E\rightleftarrows F:g \]
that are mutual inverses up to homotopy.

So even though the objects of $\Mod(\As)$ are not complexes, homotopy equivalence makes sense.  Every object of $\Mod(\As)$ does have an underlying chain complex, however.  The following lemma implies that whenever two objects are homotopy equivalent in $\Mod(\As)$, then their underlying complexes are homotopy equivalent in $Ch(\A)$.

\begin{lem} Let $U:\Mod(\As)\lra Ch(\A)$ be the ``underlying chain complex'' functor which sends an $\As$-module $(E^\bullet,d^E)$ to the chain complex of $\Ac$-modules $(E^\bullet,d^E_0)$, and sends a morphism $\phi\in\Hom^k(E,F)$, which is determined by its components
\[  \phi_j:E^{\bullet-j}\to F^{\bullet+k}\tens A^j, \]
to $\phi_0: E^\bullet\to F^\bullet.$  Then $U$ is a dgb-functor.
\end{lem}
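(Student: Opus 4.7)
The plan is to verify successively that $U$ is a well-defined functor, that it is compatible with the differential on hom-complexes, and finally that the induced maps on hom-spaces are bounded, so that $U$ is indeed a dgb-functor. All of the checks reduce to extracting the component of total form-degree zero from the standard decomposition $d^E = \sum_{k \geq 0} d^E_k$ with $d^E_k : E^\bullet \to E^{\bullet + 1 - k} \tensAc \A^k$, and using the fact that all $d^E_k$ for $k \neq 1$ as well as all $\phi_j$ are $\Adot$-linear.

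First, to see that $(E^\bullet, d^E_0)$ is a chain complex, note that $(d^E)^2$ has degree-$(1-k)$ component (in $E$) equal to a sum of terms of the form $d^E_i \circ d^E_j$ with $i + j = k$. Applied to $e \in E^\bullet \subset \Ec$, condition (ii) of Definition \ref{D:Zconnection} gives $(d^E)^2(e) = -e \tensAc c$, which lives in $E^\bullet \tensAc \A^2$. The component landing in $E^\bullet \tensAc \A^0 = E^\bullet$ is precisely $d^E_0 \circ d^E_0$, and this must therefore vanish. Hence $U(E) := (E^\bullet, d^E_0) \in Ch(\A)$.

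Next, to check functoriality, let $\phi \in \Hom^k(E,F)$ with components $\phi_j : E^{\bullet - j} \to F^{\bullet + k} \tensAc \A^j$ and $\psi \in \Hom^\ell(F,G)$ with components $\psi_i$. Since $\phi$ and $\psi$ are $\Adot$-linear as maps of $\Ec$ and $\Fc$, computing $\psi \circ \phi$ on $e \in E^\bullet$ and extracting the summand in $G^\bullet \tensAc \A^0$ gives $(\psi \circ \phi)_0 = \psi_0 \circ \phi_0$, so $U$ respects composition. Preservation of the identity is immediate. Now for the differential: $d^{EF}(\phi) = d^F \circ \phi - (-1)^{|\phi|} \phi \circ d^E$. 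By the same $\Adot$-linearity argument, the zero-component of $d^F \circ \phi$ restricted to $E^\bullet$ is $d^F_0 \circ \phi_0$, and the zero-component of $\phi \circ d^E$ is $\phi_0 \circ d^E_0$. Thus
\[ U(d^{EF}(\phi)) \;=\; d^F_0 \circ \phi_0 - (-1)^{|\phi|} \phi_0 \circ d^E_0 \;=\; d^{U(E)\, U(F)}(U(\phi)), \]
so $U$ commutes with the hom-complex differentials.

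Finally, for the bornological enrichment, the map $\phi \mapsto \phi_0$ on hom-spaces is the composition of the bounded inclusion $E^\bullet \hookrightarrow \Ec^\bullet$ with the bounded projection $\Fc^{\bullet + k} \to F^{\bullet + k}$ onto the summand $F^{\bullet + k} \tensAc \A^0$; each hom-complex carries the equibounded bornology, so this sends bounded families of morphisms to bounded families, and the induced map of chain complexes $\LAdot^\bullet(\Ec,\Fc) \to \Lc_\Ac^\bullet(E,F)$ is bounded. The only mild subtlety is identifying the projection onto $\A^0$ as bounded: this is the case because for each fixed total degree the decomposition $\Ec^q = \bigoplus_{i+j=q} E^i \tensAc \A^j$ is a finite direct sum of bornological subspaces and the coordinate projections are bounded. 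Combining these three steps yields that $U$ is a dgb-functor, and no step presents a genuine obstacle — the only thing to keep track of carefully is that every projection-onto-$\A^0$ argument is legitimate because the $\Adot$-linearity of the components $d^E_k$ (for $k \neq 1$) and $\phi_j$ prevents higher form-degree pieces from contaminating the zero-component.
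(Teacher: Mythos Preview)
Your proof is correct and follows the same approach as the paper: extract the form-degree-zero component and verify $(d\phi)_0 = d(\phi_0)$. The paper's proof is briefer, checking only the chain-map property on hom-complexes (the fact that $(d^E_0)^2=0$ is stated just before the lemma, and functoriality and boundedness are left implicit), but your more detailed verification of functoriality and the bornological structure is entirely compatible with, and a sound expansion of, the paper's argument.
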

\begin{proof}  This is fairly obvious, but let us check that $\Hom^\bullet(E,F)\to\Hom^\bullet(U(E),U(F))$ is a chain morphism, meaning that for $\phi\in \Hom^k(E,F)$, $d(\phi_0)=(d\phi)_0$:
\[ d(\phi_0):=d^F_0\circ\phi_0-(-1)^k\phi_0\circ d^E_0=(d^F\circ\phi-(-1)^k\phi\circ d^E)_0=(d\phi)_0. \]
\end{proof}
\begin{cor}\label{C:underlying} A homotopy equivalence in $\Mod(\As)$ induces a homotopy equivalence of the underlying complexes of $\Ac$-modules.
\begin{flushright}
$\square$
\end{flushright}
\end{cor}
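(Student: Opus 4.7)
The plan is essentially to observe that any dg-functor automatically preserves the notion of homotopy equivalence, since homotopy equivalence in a dg-category is defined purely in terms of the differential on hom-complexes together with composition, and both of these are respected by a dg-functor. So with the lemma just proved in hand, the corollary becomes almost formal.

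More concretely, I would proceed as follows. Suppose $f \in \Hom^0(E,F)$ and $g \in \Hom^0(F,E)$ are degree-zero closed morphisms in $\Mod(\As)$ together with $h \in \Hom^{-1}(E,E)$ and $h' \in \Hom^{-1}(F,F)$ such that
\[ g f - \mathrm{id}_E = d^{EE}(h), \qquad f g - \mathrm{id}_F = d^{FF}(h'). \]
Apply the functor $U$ of the preceding lemma. Since $U$ is a dgb-functor, it is compatible with composition and sends the differential on hom-complexes of $\Mod(\As)$ to the differential on hom-complexes of $Ch(\A)$. In particular $U(f)$ and $U(g)$ are degree-zero chain maps with $U(\mathrm{id}_E) = \mathrm{id}_{U(E)}$ and $U(\mathrm{id}_F) = \mathrm{id}_{U(F)}$, and the relations above become
\[ U(g)\, U(f) - \mathrm{id}_{U(E)} = d( U(h) ), \qquad U(f)\, U(g) - \mathrm{id}_{U(F)} = d( U(h') ), \]
which exhibits $U(E)$ and $U(F)$ as homotopy equivalent in $Ch(\A)$.

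The only thing to check is that the preceding lemma really gives the two properties used above, namely that $U$ respects composition and the hom-complex differential; the first is clear from the formula $\phi \mapsto \phi_0$ (the zero-component of a composition is the composition of zero-components), and the second is precisely what was verified in the lemma. There is no genuine obstacle here: the content is entirely in the lemma, and the corollary is its immediate dg-categorical consequence.
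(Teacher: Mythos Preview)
Your proposal is correct and matches the paper's approach: the paper simply places a $\square$ after the corollary, treating it as an immediate formal consequence of the preceding lemma that $U$ is a dgb-functor. You have spelled out exactly why any dg-functor preserves homotopy equivalences, which is the content implicit in that $\square$.
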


We can now define h-finite objects in $\Mod(\As)$.
\begin{defn} An $\As$-module $(E^\bullet,d^E)$ is called \textbf{h-nuclear} if its underlying complex is homotopy equivalent to a complex of $\A-$modules $P=(P^\bullet,d^P)$ with each $P^k$ finitely generated projective, or equivalently if the underlying complex of $(E^\bullet,d^E)$ is h-nuclear in $Ch(\Ac)$.
\end{defn}
One might think it is more convenient to work with $\As$-modules whose underlying complex is just h-finite, that is we might try to relax the essentialness condition to homotopy essentialness.  But for non-essential modules there are some technical difficulties that arise.  For example one would then have to be careful to define the $\Zb$-connection on $E^\bullet$ rather than the space $E^\bullet\tensA\A$.  To avoid such issues we always require that modules be essential.
Remember our convention is that all chain complexes are bounded in grading degree.
\begin{defn}  Let $\As$ be a bornological curved dga.  The \textbf{homotopy-perfect category (or h-perfect category)} over $\As$ is the full sub-dgb-category
\[ \hPa\subset\Mod(\As) \]
of h-nuclear $\As-$modules.
\end{defn}
The h-perfect category always contains as a full subcategory the \textbf{perfect category} $\psas$ whose objects are the \textbf{finite} $\As$-modules, that is those $(E^\bullet,d^E)$ for which each $E^k$ is essential and finitely generated projective over $\Ac$.

The following lemma implies that whenever $\Adot$ is unital, the inclusion $\psas\hookrightarrow\hPa$
is a dgb-quasi-equivalence, so we have defined an honest generalization to the nonunital case of the perfect category.

\begin{lem}\label{L:grauert}\cite{Bl1}  Let $\As$ be a unital curved dga, and $E\in\Mod(\As)$.  Suppose the underlying complex $(E^\bullet,d^E_0)$ of $E$ is homotopy equivalent to a complex $(P^\bullet,d)$ that is bounded in grading degree and with each $P^k$ a finitely generated projective $\A$-module.  Then $(P^\bullet,d)$ can be extended to an $\As$-module $(P^\bullet,d^P)$ which is homotopy equivalent in $\Mod(\As)$ to $E$ and such that $d=d^P_0$.
\end{lem}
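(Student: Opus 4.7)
The plan is to transfer the $\Zb$-connection from $E$ to $P$ via a homological perturbation argument adapted to the curved dga setting.

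The first step is to upgrade the given homotopy equivalence between $(E^\bullet, d^E_0)$ and $(P^\bullet, d)$ to a strong deformation retract (SDR) of complexes of $\A$-modules: $\A$-linear chain maps $f_0: P \to E$ and $g_0: E \to P$ together with a degree $-1$ map $h: E \to E$ satisfying $g_0 f_0 = \op{id}_P$ and $\op{id}_E - f_0 g_0 = d^E_0 h + h d^E_0$. The strict identity $g_0 f_0 = \op{id}_P$ can be arranged from an arbitrary homotopy equivalence by a standard adjustment, exploiting that each $P^k$ is finitely generated and projective. Extending $f_0$, $g_0$, and $h$ by $\Adot$-linearity produces an SDR between $\Pc^\bullet := P^\bullet \otimes_\A \Adot$ and $\Ec^\bullet := E^\bullet \otimes_\A \Adot$ for the $\Adot$-linear extension of $d^E_0$.

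Second, decompose the full connection as $d^E = d^E_0 + \delta$ where $\delta = \sum_{k \geq 1} d^E_k$ strictly raises $\Adot$-grading. Because $E^\bullet$ is bounded in grading degree and $h$ lowers $E$-grading by one, the composite $h\delta$ strictly decreases $E$-grading, so the formal series $(1 - h\delta)^{-1} = \sum_{n \geq 0} (h\delta)^n$ reduces to a finite sum when evaluated on any homogeneous element. Applying the homological perturbation lemma, I set
\[
d^P := d + g_0\, \delta\, (1 - h\delta)^{-1} f_0,
\]
together with the transferred data
\[
f := \bigl(1 + h\,(1-\delta h)^{-1} \delta\bigr) f_0, \qquad g := g_0 \bigl(1 + \delta\,(1-h\delta)^{-1} h\bigr), \qquad h' := h\,(1 - \delta h)^{-1},
\]
extended $\Adot$-linearly throughout. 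By the bigrading considerations above, the $\Adot$-grading-zero component of $d^P$ is exactly $d$, giving $d^P_0 = d$ as required.

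Finally, verify the structural identities. The formal HPL calculations show that $f$ and $g$ intertwine $d^E$ with $d^P$ (hence define degree-zero morphisms in $\Mod(\As)$), that $gf = \op{id}_P$, and that $\op{id}_\Ec - fg = d^E h' + h' d^E$, yielding the desired homotopy equivalence in $\Mod(\As)$. The main technical obstacle is the curvature identity $(d^P)^2 = R_{-c}$, where $R_{-c}$ denotes right multiplication by the curving $-c$. This is the curved analogue of the uncurved-HPL assertion that the transferred perturbed differential squares to zero. Since $c \in \A^2$ and the SDR data $f_0, g_0, h$ are all $\Adot$-linear, $R_{-c}$ commutes with everything in sight, and the standard telescoping derivation that yields $(d^P)^2 = 0$ in the uncurved case here yields $(d^P)^2 = R_{-c}$, with the curvature term surviving intact through the geometric series. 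Carefully tracking this curvature term through the computation is the heart of the argument; once it is granted, all claims in the lemma follow.
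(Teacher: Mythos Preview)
The paper does not supply a proof of this lemma; it simply cites \cite{Bl1}, so there is no in-paper argument to compare against. Your homological perturbation approach is in fact the standard method for this kind of transfer result and is essentially what is carried out in \cite{Bl1}. The convergence argument via boundedness of the $E$-grading is correct, and your observation that $R_{-c}$ commutes with all the $\Adot$-linear SDR data is exactly what makes the curved version go through: the usual telescoping computation that yields $(d^P)^2=0$ in the flat case yields $(d^P)^2=R_{-c}$ here.

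One small point worth tightening: for the perturbed data to satisfy the identities you claim (in particular $gf=\op{id}_P$ and $\op{id}_\Ec - fg = d^E h' + h' d^E$), the SDR should be taken with the standard side conditions $h^2=0$, $hf_0=0$, $g_0h=0$. These can always be imposed by the usual normalization of an arbitrary SDR, but you should say so explicitly rather than leaving it implicit in ``a standard adjustment.''
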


The next goal is to define the bimodules that induce morphisms between curved dga's (or their associated h-perfect categories), and to single out a class of equivalence bimodules that induce equivalences of h-perfect categories.
\begin{defn}\label{bimod}
Given a pair $(\As,\Bs)$ of curved dga's, an \textbf{$(\As$-$\Bs)$-twisted bimodule} $X=(X^\bullet,d^X)$ is a graded right essential $\Bc$-module $\Xdot$ equipped with the following structure:
\begin{enumerate}
\item A left action of $\Adot$ on $X^\bullet\tensBc\Bc^{\bullet}$ satisfying \\
\quad(i) $\:a\cdot (x\otimes b_1b_2)=(a\cdot(x\otimes b_1))\cdot b_2\quad$.
\item A degree one map $d^X:\Xdot\tensBc\Bc^{\bullet}\to\Xdot\tensBc\Bc^{\bullet}$ satisfying\\
\quad(i) $d^X(a\cdot(x\otimes b_1)b_2)=d_\A a\cdot(x\otimes b_1b_2)+(-1)^{|a|}a\cdot d^X(x\otimes b_1)b_2$\\
         $+(-1)^{|a(x\otimes b_1)|}a\cdot(x\otimes b_1)d_\Bc b_2$\\
\quad(ii) $d^X\circ d^X(x\otimes b)=c_\A\cdot (x\otimes b)-(x\otimes b)\cdot c_\Bc$\\
          for $a\in\Ac^{\bullet}$, $x\in X^\bullet$ and $b_1,b_2\in\Bc^{\bullet}$.
\end{enumerate}
Note that $(\A,d_\A)$ is an $(\As$-$\As)$-twisted bimodule.
\end{defn}
Twisted bimodules are designed to be the Morita type morphisms, in the sense that an $(\As$-$\Bs)$-twisted bimodule $X$ induces a dg-functor
\[ X_*:\Mod(\As)\lra\Mod(\Bs) \] defined by
\[
(\Edot,d^E)\longmapsto(\Edot
          \tens_{\!\A}\Xdot,d^E\# d^X)
\]
where $d^E\# d^X(e\otimes x):=d^E(e)\cdot x+(-1)^{|e|}e\otimes d^X(x),$
and here $\cdot$ denotes the action of $\Adot$ on $\Xdot\tensB\Bdot$.

On homs, the functor is just $\phi\mapsto\phi\otimes_\Bc 1_X$, and one checks directly that this is a morphism of complexes, so that $X_*$ is indeed a dg-functor.

Now for the equivalences.
\begin{defn}\label{D:hMoritaBimodule} An \textbf{h-Morita equivalence twisted bimodule} is a twisted $(\As$-$\Bs)$-bimodule $(X^\bullet,d^X)$ such that there exists a dual $(\Bs$-$\As)$-twisted bimodule $(Y^\bullet,d^Y)$ and \textbf{homotopy equivalences}
\[ X^\bullet\tensB Y^\bullet\approx\As\text{ and }Y^\bullet\tensA X^\bullet\approx\Bs. \]
By homotopy equivalence, we mean that for every $(E,d^E)\in\Mod(\As)$, the objects $Y_*\circ X_*(E^\bullet,d^E)$ and $\A_*(E^\bullet,d^E)$ are homotopy equivalent in $\Mod(\As)$ in a functorial way.  When there exists such a bimodule we say $\As$ and $\Bs$ are \textbf{h-Morita equivalent}.

We call $(X^\bullet,d^X)$ a \textbf{Morita equivalence twisted bimodule} (without the h) when homotopy equivalence is replaced by isomorphism, and in this case the two curved dga's are called \textbf{Morita equivalent}.
\end{defn}
In practice, it will be seen that nontrivial dualities (such as those between noncommutative tori and gerbes on dual-tori) are implemented by h-Morita bimodules, while the more restrictive Morita bimodules implement ``changes of presentation'' (see Section \ref{S:threelevels}) in a sense analogous to a change of groupoid presentation of a stack.

Now we will state and prove our main theorem.
\begin{thm}Let $\As$ and $\Bs$ be curved dga's and $X$ an h-Morita equivalence twisted bimodule.  Then $X_*$ takes $\hPa$ into $\hPb$ and is a dgb-quasi-equivalence.
\end{thm}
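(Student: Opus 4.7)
The plan is to mirror the proof of Theorem \ref{T:dgMorita} in the enriched setting of $\Zb$-connections, using the underlying-complex functor $U$ of Corollary \ref{C:underlying} to reduce the homotopical content to the bornological-module statements of Section \ref{S:bornoalg}. Three things need to be verified: (1) $X_*$ lands in $\hPb$, (2) $\Ho(X_*)$ is an equivalence of categories, and (3) $X_*$ is a quasi-isomorphism on morphism complexes.

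First I would show (1). Given $E=(E^\bullet,d^E)\in\hPa$, the right $\Bc$-module $E^\bullet\tensA X^\bullet$ is essential because $\Xdot$ is, so $X_*(E)$ is a legitimate $\Bs$-module. To check h-nuclearity, apply $U$: the functorial homotopy equivalence $Y_*X_*(E)\approx\A_*(E)=E$ in $\Mod(\As)$ descends by Corollary \ref{C:underlying} to a homotopy equivalence of underlying $\Ac$-complexes. Substituting the chain-level homotopy equivalences $X^\bullet\tensB Y^\bullet\approx\Ac$ and $Y^\bullet\tensA X^\bullet\approx\Bc$ into the commutative diagram used in the proof of Theorem \ref{T:dgMorita} (now with chain-homotopy commutativity in place of strict commutativity), the same diagram chase together with Corollary \ref{C:hfinite} shows that $\mathrm{id}_{UX_*(E)}$ is homotopic to a $\Bc$-nuclear map. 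Hence $X_*(E)\in\hPb$.

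For (2) and (3), by symmetry $Y_*$ maps $\hPb$ into $\hPa$. The functorial homotopy equivalences built into Definition \ref{D:hMoritaBimodule} descend to natural isomorphisms $\Ho(Y_*)\circ\Ho(X_*)\cong\mathrm{id}$ and $\Ho(X_*)\circ\Ho(Y_*)\cong\mathrm{id}$, which gives (2). For (3), for $M,N\in\hPa$ I would consider the composite
\[
\Hom_\As^\bullet(M,N)\xrightarrow{X_*}\Hom_\Bs^\bullet(X_*M,X_*N)\xrightarrow{Y_*}\Hom_\As^\bullet(Y_*X_*M,Y_*X_*N).
\]
Conjugating by the functorial homotopy equivalences $\eta_M:Y_*X_*M\approx M$ and $\eta_N:Y_*X_*N\approx N$ yields a chain-homotopy equivalence from the right-hand complex back to $\Hom_\As^\bullet(M,N)$, and the resulting composite is chain-homotopic to the identity. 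Thus $Y_*\circ X_*$ is a quasi-isomorphism on hom complexes; the symmetric argument applied to $X_*\circ Y_*$ on the $\Bs$-side then forces $X_*$ itself to be a quasi-isomorphism on hom complexes.

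The main obstacle will be extracting enough naturality from the phrase ``in a functorial way'' in Definition \ref{D:hMoritaBimodule} to make the ``conjugation by $\eta$'' step rigorous. One needs the $\eta_E$ to be closed degree-zero morphisms in $\Mod(\As)$ that are natural in $E$ as morphisms of hom complexes, with homotopy inverses whose homotopies can also be chosen naturally. Once that naturality is established, conjugation really does induce a chain-homotopy equivalence on $\Hom^\bullet$, and the remaining content reduces mechanically to the already-established bornological Morita theorem for $\hFin$ via the functor $U$.
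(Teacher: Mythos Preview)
Your approach is essentially identical to the paper's, and correct. The paper handles part (1) exactly as you do, by observing that $X^\bullet$ is essential and then reducing to the underlying complexes via $U$ so that the diagram chase of Theorem~\ref{T:dgMorita} applies; the only difference is that the paper compresses this into the line ``$U(X_*(M))=U(X)_*(U(M))$, so Theorem~\ref{T:dgMorita} applies.'' For parts (2) and (3) the paper is even more terse than you: it simply asserts that the quasi-equivalence ``follows immediately from the definition of h-Morita equivalence,'' whereas you actually write out the conjugation argument on hom complexes. Your worry about extracting naturality from the phrase ``in a functorial way'' is not a genuine obstacle---that functoriality is precisely what Definition~\ref{D:hMoritaBimodule} demands, so the closed degree-zero natural transformations $\eta$ you need are part of the data---and your spelled-out argument is exactly what the paper's ``immediately'' is abbreviating.
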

\begin{proof} Let $M\in\hPa$ be given.  First note that $X$ is an essential $\Bc$-module so $X_*(M)$ is as well. Now, by definition, the underlying complex $U(M)$ of $M$ is an h-nuclear complex of $\Ac$-modules.  Thus the proof of Theorem \ref{T:dgMorita} applied to $U(M)$ and $U(X_*(M))=U(X)_*(U(M))$ shows that the underlying complex of $X_*(M)$ is h-nuclear.

We just showed that the image of $X_*$ does indeed consist of h-nuclear $\Bs$-modules.  It follows immediately from the definition of h-Morita equivalence that $X_*:\hPa\to\hPb$ is a quasi-equivalence of dgb-categories.
\end{proof}

\section{ Bornological properties of twisted Dolbeault and de Rham algebras }\label{S:bornoprops}

Now that the definition and first properties of the h-perfect category of a nonunital curved dga have been defined, we return attention to gerbes with connection.  In this section we will show that they fit into the framework of h-perfect categories.  More precisely, in this section we construct bornologies on algebras such as the convolution algebra $C_c^\infty(\Gc)$ of a groupoid and the twisted de Rham and Dolbeault algebras of Section \ref{S:twistedDR}.  We show that the resulting bornological algebras are complete, quasi-unital, and multiplicatively convex.  Lastly, we show that groupoid Morita equivalences induce Morita equivalences of smooth convolution algebras.
\newline\newline
\noindent\underline{The bornology on $C_c^\infty(\Gc;\sigma)$.}\indent  Let $\Gc$ be a Lie groupoid with smooth 2-cocycle $\sigma:\Gc_2\to U(1)$.  As a vector space, $C_c^\infty(\Gc;\sigma)$ is just the smooth compactly supported functions on the manifold $\Gc_1$, so we endow it with the LF-space structure given by Equation \eqref{E:seminorms}.  For the bornology on $C_c^\infty(\Gc;\sigma)$ we take the precompact bornology associated to the LF-space.

There are Lie groupoids for which this bornology is not multiplicatively convex (see Remark \ref{R:nonconvex}).  However, in the case that $\Gc$ is proper and \'etale we will be able to choose the seminorms for the LF-structure for $C_c^\infty(\Gc;\sigma)$ in a special way so that multiplicative convexity can be shown to hold.  Let us describe the method here.

Assume $\Gc$ is proper and \'etale.  If $K$ is a compact subset of $\Gc_0$, then properness ensures that the subgroupoid $\Gc_K^K:=(s\times t)^{-1}(K\times K)$ is itself compact.  Then the set $C_K^\infty(\Gc;\sigma)$ of functions in $C_c^\infty(\Gc;\sigma)$ with support in $\Gc_K^K$ actually forms a subalgebra.  We endow $C_K^\infty(\Gc;\sigma)$ with a Fr\'echet structure in the following way.  First, choose a finite collection of vector fields on the manifold $\Gc_1$ which span the tangent bundle of a neighborhood of $\Gc_K^K$ in $\Gc$.  If $X$ is a vector field in this collection, then add to the collection the vector field $\iota_*X$, whose flow is the inverse (groupoid inverse) of the flow associated to $X$.  Then use this resulting collection $\{Y_i\}$ to define the seminorms
\begin{equation}\label{E:seminorms} \rho^K_n(a):=\sum_{|I|\leq n}\sup_{g\in \Gc_K^K}|Y_Ia(g)|\ \ \text{ for } a\in C_c^\infty(\Gc;\sigma) \end{equation}
where $I=(i_1\dots i_k)$ denotes a multi-index of length $|I|=k$ and $Y_I:=Y_{i_1}\circ\cdots\circ Y_{i_k}$.  The point of adding the ``inverse'' vector fields is that each $\rho^K_n$ is invariant under the algebra involution $a\mapsto a^*(g):=\overline{\sigma(g,g^{-1})a(g^{-1})}$.  This is a Fr\'echet space and will be shown in the proof of Theorem \ref{T:bornoProps} to be an admissible Fr\'echet algebra.  Then exhausting $\Gc_0$ by compacts $K_n$ induces an exhaustion of $\Gc$ by compact subgroupoids $\Gc_{K_n}^{K_n}$ and allows us to write $C_c^\infty(\Gc;\sigma)$ as a limit of admissible Fr\'echet algebras.  This implies, according to Lemma \ref{L:LFborno2}, that $C_c^\infty(\Gc;\sigma)$ with its precompact bornology is multiplicatively convex.

Of course the LF structure on $C_c^\infty(\Gc;\sigma)$ is independent of the choice of exhaustion of $\Gc_1$ by compact subsets; the special choices were just a tool for the proof.
\newline\newline
\noindent\underline{The bornology on twisted de Rham and Dolbeault algebras}  Let $\Gc$ be a proper \'etale groupoid and let $\As(\Gc,(\sigma,\theta,B))=(\Adot,d,c)$ be a twisted de Rham dga.  Recall that $\Adot:=\Gamma^\infty_c(\Gc,\sigma;t^*(\wedge^\bullet T^*_{\Gc_0,\Cb}))$ so $\Ac:=\Ac^0=C_c^\infty(\Gc;\sigma)$.  We define the bornology on $\Adot$ as follows.  Choose a smooth family $||\cdot ||$ of Banach algebra norms on $\wedge^\bullet T_\Gc^*$ (such norms exist since this is a bundle of finite dimensional algebras), then say $S\subset\Adot$ is bounded if $||S||\subset C_c^\infty(\Gc)$ is bounded.  The resulting bornology does not depend on the norm.  This follows easily from two facts.  The first is that any two norms, when restricted to a compact subset $K\subset\Gc$, are uniformly equivalent (in the sense that there are positive constants $C$ and $D$ such that $C||\cdot ||_1\leq ||\cdot ||_2\leq D||\cdot ||_1$ over K).  The second is that a bounded subset of $\Adot$ (with respect to either norm) must consist of functions whose combined support lies in some compact set (since that is the case for $C_c^\infty(\Gc;\sigma)$).  Note that this bornology is precisely the precompact bornology associated to the obvious LF-space structure on $\Adot$.  The bornology for a twisted Dolbeault algebra is defined in exactly the same way.

\begin{thm}\label{T:bornoProps} Let $\As(\Gc,(\sigma,\theta,B))=(\Adot,d,c)$ be a twisted Dolbeault or de Rham dga associated to a gerbe with connection, as in Definition \ref{D:dolbeaultAlg} or \ref{D:twisteddeRhamAlg}. Suppose that $\Gc$ is proper.  Then $\Ac$ and $\Adot$ have natural bornological algebra structures for which the following properties hold:
\begin{enumerate}
\item  The bornology only depends on $\Gc$, not on $(\sigma,\theta,B)$.
\item $\Ac$ is a bornological subalgebra of $\Adot$ and $\Adot$ is an essential $\Ac$-module.
\item $\Ac$ and $\Adot$ are complete, convex, quasi-unital, multiplicatively convex bornological *-algebras.
\end{enumerate}
\end{thm}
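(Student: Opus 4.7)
The plan is to establish (1)--(3) roughly in order, with the bulk of the effort concentrated on multiplicative convexity. For (1), note that as a graded vector space $\Adot$ equals $\Gamma_c^\infty(\Gc_1;\, t^*(\wedge^\bullet T^*_{\Gc_0,\Cb}))$, whose LF-structure and hence precompact bornology depend only on the manifold $\Gc_1$, not on $(\sigma,\theta,B)$. For (2), the inclusion $\Ac \hookrightarrow \Adot$ as degree-zero forms is tautologically a bounded subalgebra inclusion. To see that $\Adot$ is essential over $\Ac$, pick a locally finite smooth partition of unity $\{\chi_n\}$ on $\Gc_0$ subordinate to a cover by relatively compact opens, view each $\chi_n$ as an element of $\Ac$ supported on the unit space $\Gc_0 \subset \Gc_1$, and set $\mu_n := \sum_{k\leq n}\chi_k$. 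A direct calculation with the twisted convolution shows $\mu_n * \omega$ equals the pointwise multiplication $(t^*\mu_n)\wedge \omega$ up to the normalization of $\sigma$ at units; since any bounded $S \subset \Adot$ has combined support in a single compact subset of $\Gc_1$, this equals $\omega$ for all $n$ large, giving the required approximate identity.

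For (3), completeness and convexity of $\Ac$ and $\Adot$ in the precompact bornology are standard features of LF-spaces. Quasi-unitality then follows from the approximate identity above together with sections of the multiplications $\Ac \to \Ac \tens \Ac^+$ and $\Ac \to \Ac^+ \tens \Ac$ assembled from the $\chi_n$; on bounded inputs the relevant formal sums are eventually finite and hence bornologically convergent. The $*$-algebra property is inherited from smoothness of the groupoid involution together with the involution-invariance of the seminorms $\rho^K_n$ defined earlier in the section.

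Multiplicative convexity is the main obstacle, and it is here that properness of $\Gc$ is essential. Exhaust $\Gc_0$ by an increasing sequence of compacts $K_n$; properness forces each subgroupoid $\Gc_{K_n}^{K_n} = (s\times t)^{-1}(K_n\times K_n)$ to be compact, so $\Adot$ is the LF-limit of the Fr\'echet subalgebras $\Adot_{K_n}$ of forms supported on $\Gc_{K_n}^{K_n}$, topologized by the seminorms $\rho^{K_n}_k$. By Lemma \ref{L:LFborno2} it suffices to prove each $\Adot_{K_n}$ is admissible in the sense of Lemma \ref{L:LFborno1}. Here one uses that $\Gc$ is \'etale, so the fibers of multiplication $\Gc_2 \to \Gc_1$ are discrete; over the compact set $\Gc_{K_n}^{K_n}$ they have uniformly bounded cardinality, making each twisted convolution $a*b(g) = \sum_{g_1 g_2 = g}\sigma(g_1,g_2)\, a(g_1)\wedge g_1\!\cdot\! b(g_2)$ a finite sum of bounded length. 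Combining this bound with the Leibniz rule for the vector fields $Y_i$ and uniform bounds on derivatives of $\sigma$ and $\theta$ over $\Gc_{K_n}^{K_n}$ produces submultiplicative estimates
\[ \rho^{K_n}_k(a*b) \leq C_{n,k}\, \rho^{K_n}_k(a)\, \rho^{K_n}_k(b). \]
The $\rho^{K_n}_k$-ball of radius $1/C_{n,k}$ is then a small neighborhood of $0$: any compact subset has $\rho^{K_n}_k$-bounded multiplicative closure, which is precompact in the Fr\'echet topology by Arzel\`a--Ascoli (since $\rho^{K_n}_{k+1}$-bounded sets on the compact manifold $\Gc_{K_n}^{K_n}$ are $\rho^{K_n}_k$-precompact). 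This verifies admissibility and completes the argument.
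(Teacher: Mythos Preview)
Your treatment of parts (1), (2), completeness, convexity, and the $*$-structure is fine and matches the paper's argument closely. Two points deserve comment.

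The minor one: for quasi-unitality you want a partition $\{\phi_n\}$ on $\Gc_0$ with $\sum\phi_n^2=1$ rather than $\sum\chi_n=1$, because the sections of multiplication must be \emph{left} (resp.\ \emph{right}) $\Ac$-module maps; the paper's formula $f\mapsto\sum (fs^*\phi_n)\otimes\phi_n$ achieves this while a plain $\sum\chi_n=1$ does not.

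The substantive gap is in your admissibility argument for multiplicative convexity. Your submultiplicative estimate $\rho^{K}_k(a*b)\leq C_{k}\,\rho^K_k(a)\,\rho^K_k(b)$ is correct, but it does \emph{not} give a small neighborhood in the sense of Lemma~\ref{L:LFborno1}. Taking $U$ to be the $\rho^K_k$-ball of radius $1/C_k$ for a fixed $k$ only shows that the multiplicative closure of a compact $S\subset U$ is $\rho^K_k$-bounded; Arzel\`a--Ascoli then yields $\rho^K_{k-1}$-precompactness, not precompactness in the full Fr\'echet topology, which requires $\rho^K_j$-boundedness for \emph{every} $j$. Since $S$ need not lie in the $\rho^K_j$-ball of radius $1/C_j$ for $j>k$, the higher seminorms of $S^m$ can blow up. (This is already a problem for m-convex Fr\'echet algebras in general: entire functions on $\Cb$ with sup-seminorms on discs are m-convex but not admissible.)

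The paper avoids this by invoking Puschnigg's derivation lemma. The key observation is that for $\sigma=1$ the vector fields $Y_i$ are genuine derivations of the convolution product on the Banach algebra $(C_K(\Gc),\|\cdot\|_\infty)$, and the derivation lemma then exploits the full Leibniz expansion of $Y_I(a^m)$---in which at most $|I|$ of the $m$ factors carry a derivative---to bound $\rho^K_k(a^m)$ by a polynomial in $m$ times a power of $\rho^K_0(a)$, which stays bounded once $\rho^K_0(a)$ is small. For $\sigma\neq 1$ the vector fields are no longer derivations, and here the paper makes essential use of the connection: the operators $X^\nabla a:=Xa+\theta(X)a$ \emph{are} derivations for the twisted product, so the derivation lemma applies to them, and one then checks that the resulting Fr\'echet structure coincides with the one defined by the $Y_i$. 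Your mention of ``uniform bounds on derivatives of $\theta$'' suggests you sensed the connection plays a role, but it enters not as an error term in a Leibniz rule---rather, it is used to manufacture honest derivations.
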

\begin{proof}  We prove the twisted de Rham dga case, the proof for the twisted Dolbeault dga case is exactly the same.

The bornologies were described in the preceding paragraphs and by construction they only depend on the differentiable structure of the groupoid, so the first statement is proved.

For the second statement, $\Ac$ is obviously a bornological subalgebra of $\Adot$, while the essentialness follows because for any $a\in\Adot$, we can choose a $\phi\in C_c^\infty(\Gc;\sigma)$ whose support $supp(\phi)$ lies in $\Gc_0$ and such that $\phi=1$ on $t(supp(a))$, and any such function satisfies $\phi*a=a$.

Now for the third statement.  Convexity and completeness as bornological spaces is automatic for $\Ac$ and $\Adot$ because these properties hold for the precompact bornology on any LF-space.  Also, the involution is bounded since the seminorms of Equation \eqref{E:seminorms} are *-invariant.

Suppose now that $\Ac$ is quasi-unital and multiplicatively convex.  Then a splitting of the multiplication map for $\Ac$ induces one for $\Adot$ and an approximate identity for $\Ac$ is also one for $\Adot$, so $\Adot$ is quasi-unital.  Also, if $S$ is bounded in $\Adot$ then $||S||$ is bounded in $\Ac$ so some multiple $\lambda ||S||$ is multiplicatively closed in $\Ac$, so by definition $\lambda S$ is multiplicatively closed in $\Adot$.  Thus $\Adot$ is multiplicatively convex.

It remains to show that $\Ac$ is quasi-unital and multiplicatively convex.
\newline\newline
\noindent\underline{Quasi-unitality:}   Choose a locally finite sequence $\{\phi_n\}$ of smooth compactly supported functions on $\Gc_0$, satisfying $\sum\phi_n^2(e)=1$ for all $e\in\Gc_0$.  Then the left $\Ac$-module map
\[
\Ac\ni f\mapsto \sum fs^*\phi_n\tens\phi_n\in\Ac^+\tens\Ac;\quad fs^*\phi_n(\gamma):=f(\gamma)\phi_n(s\gamma)
\]
is a splitting of the multiplication map, and $f\mapsto\sum\phi_n\tens r^*\phi_n f$ gives the right $\Ac-$module splitting.  Furthermore, the sequence $e_k:=\sum_{n<k}\phi_k^2$, when viewed as a function on $\Gc$ with support in $\Gc_0$, is an approximate identity.  Note that the twisting $\sigma$ is irrelevant because the $\phi_n$ are supported on $\Gc_0$ and $\sigma(\eta,\tau)= 1$ whenever either $\eta$ or $\tau$ is a unit.
\newline\newline
\noindent\underline{Multiplicative convexity:}  First consider the case that $\sigma=1$.  For any compact set the algebra $C_K(\Gc)$ of continuous functions with support in $\Gc_K^K=(s\times t)^{-1}(K\times K)$ with sup-norm is a Banach algebra for the convolution multiplication.  Since the groupoid is \'etale, derivatives of functions can only be taken in the ``horizontal'' direction.  Consequently any vector field on $\Gc_1$ acts by derivations for the convolution multiplication:  $X(a*b)=X(a)b+aX(b)$.  Then Puschnigg's derivation lemma (\cite{Pus} p.118) applies: some countable family $\{X_i\}$ of vector fields has $C_K^\infty(\Gc)$ as its common domain of iterated applications thus $C^\infty_K(\Gc)$ is an admissible Fr\'echet algebra.  Exhausting $\Gc$ by compact subgroupoids, one exhibits $\Ac$ as a limit of admissible Fr\'echet algebras, so Lemma \ref{L:LFborno2} implies that $\Ac$ with its precompact bornology is multiplicatively convex.

To treat the case $\sigma\neq 1$, we want to show that $C^\infty_K(\Gc)$ is an admissible Fr\'echet algebra even with the $\sigma$-twisted multiplication.  Clearly this will finish the proof.

We would like to use the derivation lemma, but vector fields are no longer derivations for the twisted multiplication.  Here the connection saves the day.

Define
\[   X^\nabla(a)_\gamma:=da(X)_\gamma+\theta(X)_\gamma a_\gamma   \]
then $X^\nabla$ is no longer a vector field, but it is a (densely defined) derivation on the Banach algebra $C_K(\Gc;\sigma)$ with sup-norm.  Applying the derivation lemma yields a multiplicatively convex Fr\'echet algebra $B$, which is the common domain of finite iterations of $X_i^\nabla$'s.  If we can show that $B=C_K^\infty(\Gc)$ we are done.  We claim that the families of seminorms induced by $\{X_i\}$ and $\{X_i^\nabla\}$ are mutually continuous with respect to each other and thus do indeed define the same Fr\'echet space.  But one can quickly check that
\[ ||X_1\cdots X_n(a)||\leq \sum_{I\subset\{1,...n\}} c_I||X^\nabla_I(a)|| \]
where the positive coefficients $c_I$ depend on the $X_I$ and $\theta$ and derivatives of those, but not on $a$, and similarly,
\[ ||X^\nabla_1\cdots X^\nabla_n(a)||\leq \sum_{I\subset\{1,...n\}} \tilde{c}_I||X_I(a)||. \]  This finishes the proof.
\end{proof}
From now on \textbf{we always assume $\As(\Gc,(\sigma,\theta,B))$ is equipped with the above bornology.}

\begin{rem}  The multiplicative convexity in Theorem \ref{T:bornoProps} cannot be expected to hold in general for the non-proper case.  For example $C_c^\infty(\Rb\arrows *)$ is not multiplicatively convex.  To see this, choose any $f\in C_c^\infty(\Rb\arrows *)$, then the singleton $\{f\}$ is a precompact set, but its multiplicative closure cannot be precompact because the sequence $\{f^k\}_{k=N,N+1,...}$ does not have support in any compact set.
\end{rem}\label{R:nonconvex}
\begin{cor} Let $(\Gc,(\sigma,\theta,B))$ present a gerbe with connection on a proper \'etale groupoid $\Gc$ or a gerbe with $\dbar$-connection on a proper complex \'etale groupoid.  Then $\As(\Gc,(\sigma,\theta,B))$ is a nonunital bornological really curved dga.
\end{cor}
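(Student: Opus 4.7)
The plan is to assemble the statement from two pillars already established in the paper: Proposition \ref{P:twistedDRalg} (and its Dolbeault analogue) for the algebraic side, and Theorem \ref{T:bornoProps} for the bornological side. Unpacking Definition \ref{D:curveddgastuff}, a nonunital bornological really curved dga is the data of a quasi-unital multiplicatively convex graded bornological algebra $\Adot$ together with a bounded degree-one derivation $d$ whose square is the inner derivation by a multiplier $c\in M(\Ac)^2$; the Bianchi identity $dc=0$ is dropped in the ``really curved'' version.

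First I would invoke Theorem \ref{T:bornoProps} to conclude that $\Adot$ equipped with its canonical precompact-LF bornology is complete, convex, quasi-unital, and multiplicatively convex. This is precisely where properness of $\Gc$ is used, since that hypothesis is what forces the Fr\'echet subalgebras $C_K^\infty(\Gc;\sigma)$ to be admissible in the sense of Lemma \ref{L:LFborno1}. Next, Proposition \ref{P:twistedDRalg} (or its Dolbeault analogue) supplies the algebraic content: associativity of the $\sigma$-twisted multiplication, the Leibniz rule for $d=\dr+\theta\wedge(\cdot)$ (respectively $\dbar+\theta\wedge(\cdot)$), and the curvature identity $d^2f=-\dcech B\wedge f=[B,f]$, all of which follow from the cocycle conditions on $(\sigma,\theta,B)$.

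The two genuinely new items to verify are the boundedness of $d$ and the interpretation of the curving $B$ as a multiplier. For boundedness, $d$ preserves compact support and restricts on each Fr\'echet piece of the LF description of $\Adot$ to a first-order differential operator plus wedge multiplication by the smooth form $\theta$; each of these is continuous on the Fr\'echet subspace, and (just as in Example \ref{ex:LF algebra}) continuity at each step of an LF system gives boundedness for the associated precompact bornology. For the multiplier point, $B$ is a smooth but in general not compactly supported $2$-form living on $\Gc_0\subset\Gc$. Since $B$ is concentrated on units, $\sigma$-twisted convolution by $B$ coincides with fibrewise wedge by $B$, and this manifestly gives a two-sided bounded $\Adot$-linear endomorphism of $\Adot$. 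Consequently $B$ determines an element of $M(\Adot)^2$, and the identity $d^2a=[B,a]$ from Proposition \ref{P:twistedDRalg} is promoted to a statement in the correct multiplier setting; the derivation $d$ then extends to multipliers by the formula $(dB)a:=d(Ba)-B\cdot da$ of Definition \ref{D:curveddgastuff}.

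I do not anticipate a substantive obstacle here: the corollary is essentially a packaging exercise that stitches together the algebraic half of Section \ref{S:twistedDR} with the bornological half of Section \ref{S:bornoprops}. The only point requiring more than routine care is keeping track of where $B$ lives, since the compactly supported convention forbids $B\in\Adot^2$ itself; once the multiplier algebra is in hand this is immediate. Finally, I would note explicitly that we are not asserting $dB=0$, which is why the output is only a \emph{really} curved dga: by Proposition \ref{P:twistedDRalg}, upgrading to a genuine curved dga requires $\dr B=0$ in the smooth case and $\dbar B=0$ in the Dolbeault case.
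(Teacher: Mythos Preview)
Your proposal is correct and follows essentially the same approach as the paper: the paper's proof simply says that everything has been shown in Theorem \ref{T:bornoProps} except that the derivation is bounded and that $B\in M(\Adot)$, and declares these last facts obvious. Your write-up is more explicit than the paper's in that you actually sketch why $d$ is bounded and why $B$ acts as a multiplier, and you also make the invocation of Proposition \ref{P:twistedDRalg} for the algebraic axioms explicit, but the underlying logic is identical.
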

\begin{proof} Everything has been shown in Theorem \ref{T:bornoProps} except that the derivation is a bornological map and that $B\in M(\Adot)$.  These last facts are obvious.
\end{proof}
Now we want to show how the assignment (groupoid$\rightsquigarrow$groupoid algebra) behaves under Morita equivalence.

\begin{thm}\label{T:Morita} Suppose two proper \'etale groupoids $\Gc$ and $\Hc$ are Morita equivalent via a bimodule $P$.  Then we have $C_c^\infty(\Hc)\simeq C_c^\infty(P)\!\tens\!_{C_c^\infty(\Gc)}C_c^\infty(P)$.  Thus $C_c^\infty(\Hc)$ is Morita equivalent to $C_c^\infty(\Gc)$ via the bimodule $C_c^\infty(P)$.
\end{thm}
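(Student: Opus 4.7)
The plan is to exhibit $C_c^\infty(P)$ as a Morita equivalence bimodule in the sense of Definition \ref{D:moritabimodule}, with a convolution-over-$\Gc$ pairing furnishing the displayed isomorphism. Recall that a Morita equivalence of proper \'etale groupoids is a manifold $P$ carrying commuting free and proper actions of $\Gc$ and $\Hc$ whose moment maps realize $P$ as a principal $\Hc$-bundle over $\Gc_0$ on one side and as a principal $\Gc$-bundle over $\Hc_0$ on the other. Because both groupoids are \'etale, the orbit maps are local diffeomorphisms, so the familiar convolution formulas, for instance
\[ (a \ast \phi)(p) := \sum_{s(g) = \pi_\Gc(p)} a(g)\,\phi(g^{-1}p), \qquad (\phi \ast b)(p) := \sum_{t(h) = \pi_\Hc(p)} \phi(ph)\,b(h^{-1}), \]
give finite sums and endow $C_c^\infty(P)$ with a $(C_c^\infty(\Gc),C_c^\infty(\Hc))$-bimodule structure. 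Boundedness of these actions for the precompact bornology is shown by the same exhaustion-by-compact-subgroupoids technique used in the proof of Theorem \ref{T:bornoProps}; one chooses vector fields spanning $TP$, these act by derivations for the convolution structure since $\Gc$ and $\Hc$ are \'etale, and Puschnigg's derivation lemma exhibits $C_c^\infty(P)$ as a bornological limit of admissible Fr\'echet modules.

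Second I would build the candidate isomorphism $\mu$ together with its inverse $\nu$. Writing $\overline\phi$ for the image of $\phi$ under the natural conjugate-linear identification of $P$ with its opposite bibundle, the pairing
\[ \mu : C_c^\infty(P) \otimes_{C_c^\infty(\Gc)} C_c^\infty(P) \to C_c^\infty(\Hc),\qquad \mu(\phi \otimes \psi)(h) := \sum_{p:\,\pi_\Gc(p) = t(h)} \overline{\phi(p)}\,\psi(ph), \]
descends to the balanced tensor product because the inner sum is invariant under $(\phi,\psi) \mapsto (\phi \ast a, a^{\ast} \ast \psi)$. Its inverse uses a cutoff: principality of $P \to \Gc_0$ combined with paracompactness of $\Gc_0$ produces $c \in C_c^\infty(P)$ with $\sum_{h \in \Hc}\abs{c(ph)}^{2} \equiv 1$ over any preassigned compact of $\Gc_0$, and then $\nu(f) := c \otimes (c \ast f)$ is a bounded right inverse to $\mu$. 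A direct manipulation of the convolution formulas shows $\nu \circ \mu = \mathrm{id}$ modulo the $C_c^\infty(\Gc)$-balancing relation. Interchanging the roles of $\Gc$ and $\Hc$ yields the companion isomorphism $C_c^\infty(\Gc) \simeq C_c^\infty(P) \otimes_{C_c^\infty(\Hc)} C_c^\infty(P)$ required by Definition \ref{D:moritabimodule}.

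The main obstacle is bornological rather than algebraic: one must check both that the algebraic tensor product captures the completed bornological tensor product and that $C_c^\infty(P)$ is essential and projective as a $C_c^\infty(\Gc)$-module. The cutoff $c$ takes care of the first point, since multiplying by $c$ on either side exhibits a given element of the completion, restricted over any compact subset of $\Gc_0$, as a finite sum of elementary tensors that are uniformly bounded in the precompact bornology. For essentiality and projectivity, a partition of unity $\{\phi_n\}$ on $\Gc_0$ pulled back to $P$ along $\pi_\Gc$ produces a bounded left $C_c^\infty(\Gc)$-module section of the multiplication map $C_c^\infty(\Gc)^+ \otimes C_c^\infty(P) \to C_c^\infty(P)$, exactly as in the quasi-unitality argument of the proof of Theorem \ref{T:bornoProps}. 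Once these ingredients are in place, Lemma \ref{rem:dualmodule} identifies the dual bimodule with $C_c^\infty(P)$ itself (up to the involution), completing the verification that $C_c^\infty(P)$ implements a bornological Morita equivalence between $C_c^\infty(\Gc)$ and $C_c^\infty(\Hc)$.
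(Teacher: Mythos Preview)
Your overall strategy---convolution bimodule structure on $C_c^\infty(P)$, an $\Hc$-valued pairing $\mu$, and an inverse built from cutoff functions coming from the principal-bundle structure---is the paper's. The gap is in your construction of the inverse $\nu$. A single compactly supported $c$ with $\sum_h|c(ph)|^2\equiv 1$ only over a preassigned compact cannot define a global linear inverse: for $f\in C_c^\infty(\Hc)$ whose support escapes that compact one gets $\mu(\nu(f))\neq f$, and letting $c$ vary with $f$ destroys linearity and boundedness. Moreover, the cutoff condition you wrote sums over $\Hc$-orbits and descends to $P/\Hc\simeq\Gc_0$, whereas what you actually need is that $\langle c,c\rangle_\Hc$ act as an identity in $C_c^\infty(\Hc)$, and that pairing sums over the fibers of $P\to\Hc_0$, which are $\Gc$-orbits; the two sides are swapped.

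The paper repairs both issues at once by replacing the single $c$ with a locally finite family: choose a locally finite cover $\{U_i\}$ of $\Hc_0$ over which the $\Gc$-principal bundle $P\to\Hc_0$ admits sections $s_i$, pick $\phi_i\in C_c^\infty(\Hc_0)$ supported in $U_i$ with $\sum_i\phi_i^2=1$, and lift to $\tilde\phi_i\in C_c^\infty(P)$ supported on $s_i(U_i)$. Then $e_n:=\sum_{i\le n}\langle\tilde\phi_i,\tilde\phi_i\rangle_\Hc=\sum_{i\le n}\phi_i^2$ is an approximate identity for $C_c^\infty(\Hc)$ (the sum stabilizes on any bounded set), and $a\mapsto\sum_i\tilde\phi_i\otimes\tilde\phi_i\cdot a$ is a well-defined bounded two-sided inverse to $\mu$: only finitely many terms contribute for each $a$, and the computation $\xi\otimes\eta=\sum_i\tilde\phi_i\otimes\tilde\phi_i\langle\xi,\eta\rangle_\Hc$ in the balanced tensor product uses the companion pairing ${}_\Gc\langle\,,\,\rangle$ and the relation $\xi\cdot\langle\eta,\zeta\rangle_\Hc={}_\Gc\langle\xi,\eta\rangle\cdot\zeta$ to move factors across. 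This locally finite family also settles your worry about the completed tensor product, since on any bounded set the inverse is given by a fixed finite sum of elementary tensors.
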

\begin{proof} Recall that the bimodule structure on $C_c^\infty(P)$ is induced by the groupoid actions.  For example given $\xi\in C_c^\infty(P)$ and $b\in C_c^\infty(\Hc)$, $\xi\cdot b(p):=\sum\xi(ph)b(h^{-1})$.

The map $C_c^\infty(P)\tens C_c^\infty(P)\to C_c^\infty(\Hc)$ can be written as a pairing
\[ \langle\ ,\ \rangle_{\negmedspace\Hc}:C_c^\infty(P)\tens C_c^\infty(P)\to C_c^\infty(\Hc) \]
\[ \xi\tens \eta\mapsto\langle \xi,\eta\rangle_{\negmedspace\Hc}(h):=\sum_{\pi(p)=rh}\overline{\xi(p)}\eta(ph). \]
and there is also a $C_c^\infty(\Gc)$-valued pairing
\[  _\Gc\negmedspace\langle\ ,\ \rangle:C_c^\infty(P)\tens C_c^\infty(P)\to C_c^\infty(\Gc) \]
with the property that for every $\xi,\eta,\zeta\in C_c^\infty(P)$, the equation $\xi\cdot\langle\eta,\zeta\rangle_{\negmedspace\Hc}=\ _\Gc\!\langle\xi,\eta\rangle\cdot\zeta$ holds.  Of course $\langle\ ,\ \rangle\!_\Hc$ descends to the quotient $C_c^\infty(P)\!\tens\!_{C_c^\infty(\Gc)}C_c^\infty(P)$.  We'll show that the pairing is surjective and has an inverse which is well defined in the quotient.

Choose a locally finite cover $\{U_i\}_{i=1,2,...}$ of $\Hc_0$ for which the base map $P\mto{\pi}\Hc_0$ has sections $s_i:U_i\to P$.  Then choose functions $\phi_i\in C_c^\infty(\Hc_0)$ with support in $U_i$ such that $\sum_{i}\phi_i(x)^2=1$ for $x\in\Hc_0$.

The sequence $e_n:=\sum_{i=1...n}\phi_i^2$ is an approximate identity (in the bornological sense) for $C_c^\infty(\Hc)$.  Indeed, let $S\subset C_c^\infty(\Hc)$ be bounded.  Then $S$ is a family of functions whose combined support lies in a compact set.  Since the cover $\{U_i\}$ is locally finite and $K$ is compact, only finitely many of the $\{U_i\}$ intersect $K$.
Thus the sequence $\{e_n\}$ stabilizes on $S$, that is, there is an $N\in\Nb$ such that $e_nf=f$ for all $f\in S$ and all $n\geq N$.

The same argument shows that if $T\subset C_c^\infty(P)$ is a bounded set then there is an $N\in\Nb$ such that $e_n\xi=\xi$ for all $\xi\in T$ and all $n\geq N$.   We could say $\{e_n\}$ is an approximate identity for the action of $C_c^\infty(\Hc)$ on $C_c^\infty(P)$.

The $\{\phi_i\}$ induce smooth functions on $P$ by the formula
\[\tilde{\phi}_i(p):=\begin{cases}
\phi(\pi x)\quad \text{ if } s_i(\pi x)=p \\
0\ \ \ \ \quad\quad      \text{otherwise.}
\end{cases}\]
One checks that $e_n=\sum_{i=1...n}\langle\tilde{\phi}_i,\tilde{\phi}_i\rangle_\Hc$, so if $a\in C_c^\infty(\Hc)$, we can write $a=\lim_{n\to\infty}e_na$.  (This limit is also equal to $e_ka$ for all $k$ larger than some $N$, so there is no question about convergence.)  But then
\[ a=\sum_{i=1...\infty}\langle\tilde{\phi}_i,\tilde{\phi}_i\rangle_\Hc a
    =\sum_{i=1...\infty}\langle\tilde{\phi}_i,\tilde{\phi}_ia\rangle_\Hc \]
so the pairing is surjective.  But in fact, for $\xi,\eta\in C_c^\infty(P)$ we have
\[ \xi\tens \eta=\lim_{n\to\infty}(e_n\xi)\tens \eta= \sum\langle\tilde{\phi}_i,\tilde{\phi}_i\rangle_\Hc \xi\tens \eta
=\sum\tilde{\phi}_i\ \!_\Gc\!\langle\tilde{\phi}_i,\xi\rangle\tens \eta. \]
But in $C_c^\infty(P)\tens_{ C_c^\infty(\Gc)} C_c^\infty(P)$ this last expression is
\[ =\sum\tilde{\phi}_i\tens\ \!_\Gc\!\langle\tilde{\phi}_i,\xi\rangle \eta
   =\sum\tilde{\phi}_i\tens\tilde{\phi}_i\langle \xi,\eta\rangle_\Hc.  \]
This means that the assignment
\[ C_c^\infty(\Hc)\ni a\to \sum_{i=1...\infty}\tilde{\phi}_i\tens\tilde{\phi}_i a\in C_c^\infty(P)\tens_{ C_c^\infty(\Gc)} C_c^\infty(P)\]
is an inverse to the pairing.

We still need to show that the pairing and its inverse are bounded.  Well, if the pairing is a continuous map for the LF-topologies, then it is bounded for the precompact bornology, and it is indeed continuous because the properness of the $\Gc$-action on $P\times_{\Gc_0}P$ implies that for $F\in C_c^\infty(P\times_{\Gc_0}P)$, the sum induced by the pairing $\langle F \rangle\!_\Hc(h)=\sum_{g}F(gp,gph)$ is always finite, in fact uniformly finite on a neighborhood of $h\in\Hc$.

For the inverse, let $S\subset C_c^\infty(\Hc)$ be bounded.  Then there is an $N$ such that the inverse image of $S$ is $(\sum_{i=1\dots N}\tilde{\phi}_i\tens\tilde{\phi}_i)S$.  This is a bounded set in \linebreak $C_c^\infty(P)\tens_{C_c^\infty(\Gc)}C_c^\infty(P)$ because the right action of $C_c(\Hc)$ is bounded.
\end{proof}
There is also a twisted version of Theorem \ref{T:Morita}.  It is the special case of Theorem \ref{T:invariance}.  In the notation of that theorem, it is the case in which $\omega_\Gc=(\sigma_\Gc,0,0)$ and $\omega_\Hc=(\sigma_\Hc,0,0)$ and $\psi=(\alpha_\Gc,\alpha_\Hc,0)$, and it implies $C_c^\infty(P)$ is a $C_c^\infty(\Gc;\sigma_\Gc)$-$C_c^\infty(\Hc;\sigma_\Hc)$-Morita equivalence bimodule (though not with the same bimodule structure).


\section{Three levels of equivalence}\label{S:threelevels}
In this section we study a hierarchy of equivalences between curved dga's as it applies to twisted Dolbeault and de Rham algebras.  The strongest form of equivalence between two curved dga's is of course isomorphism, the next is the equivalence induced by a Morita equivalence twisted bimodule, and the weakest (yet most interesting) is the quasi-equivalence of associated h-perfect categories which is induced by an h-Morita twisted bimodule.  For twisted Dolbeault and de Rham algebras, which are defined via certain 2-cocycles, we show that cohomologous cocycles determine isomorphic curved dga's.  Next, we describe a way to compare 2-cocycles on Morita equivalent groupoids and show that when the cocycles are ``cohomologous'' with respect to this comparison, there is a natural Morita equivalence twisted bimodule over the curved dga's.   Thus the Morita equivalence class of these dga's only depends on the image of the 2-cocycle in stack cohomology, so the dga's can indeed be regarded as presenting (stack theoretic) gerbes with connection.  The third type of equivalence in this setting can be interpreted as Mukai duality for gerbes with connection.  In Section \ref{S:torus} we take up a specific example of this, showing that any flat gerbe on a torus is dual to a noncommutative complex dual-torus.

\subsection{The isomorphism corresponding to gauge transformation}
\begin{prop} Let $\Gc$ be an \'etale groupoid $($either smooth or complex\thinspace$)$.  Then two twisted de Rham or Dolbeault dga's, $\As(\Gc,(\sigma',\theta',B'))=(\Ac_1^\bullet,d_1,c_1)$ and $\As(\Gc,(\sigma,\theta,B))=(\Ac_2^\bullet,d_2,c_2)$, are isomorphic as curved dga's $($see Definition \ref{D:curveddgastuff}\thinspace$)$ whenever the 2-cocycles $(\sigma',\theta',B')$ and $(\sigma,\theta,B)$ are cohomologous.
\end{prop}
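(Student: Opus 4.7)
The plan is to build an explicit isomorphism $(\Phi,\omega)$ of curved dga's, in the sense of Definition~\ref{D:curveddgastuff}, from a \v{C}ech--Deligne 1-cochain witnessing the cohomology between the two 2-cocycles. Writing $(\sigma',\theta',B') - (\sigma,\theta,B) = \dcechdeli(\alpha,\omega)$ with $\alpha \in C^{1,0}(\Gc;\Deli)$ a $\Uone$-valued (resp.\ $\Cb^*$-valued) smooth function on $\Gc_1$ and $\omega \in C^{0,1}(\Gc;\Deli)$ a global $1$-form (resp.\ $(0,1)$-form) on $\Gc_0$, and unpacking $\dcechdeli = \ddeli + (-1)^q\dcech$ bidegree by bidegree, the cohomology relation becomes the three identities
\[
\sigma' = \sigma \cdot \dcech\alpha, \qquad \theta' = \theta + \dlog\alpha - \dcech\omega, \qquad B' = B + \dr\omega,
\]
with $\dbar$ in place of $\dr$ in the Dolbeault case.

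Define a degree-zero bornological isomorphism $\Phi\colon \Ac_1^\bullet \to \Ac_2^\bullet$ by $\Phi(f)(\gamma) := \alpha(\gamma) f(\gamma)$; its inverse is multiplication by $\alpha^{-1}$. I would then verify the three defining conditions of a morphism of curved dga's. For the algebra condition, expanding $\Phi(f *_{\sigma'} g) = \Phi(f) *_\sigma \Phi(g)$ via the multiplication formula of Definition~\ref{D:twisteddeRhamAlg}, and using that the $\Gc$-action on the locally constant sheaf $\Uone$ (resp.\ $\Cb^*$) is trivial, reduces the requirement to the pointwise identity $\alpha(\eta\tau)\sigma'(\eta,\tau) = \alpha(\eta)\alpha(\tau)\sigma(\eta,\tau)$, which is exactly the first cocycle relation. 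For the derivation condition, view $\omega$ as a multiplier of $\Ac_2^\bullet$ supported on the units $\Gc_0 \subset \Gc_1$; a short computation in the twisted convolution product yields $[\omega,\Phi(a)](\gamma) = -\dcech\omega(\gamma) \wedge \Phi(a)(\gamma)$, while the Leibniz rule and $\dlog\alpha = \alpha^{-1}\dr\alpha$ expand $\Phi(d_1 a) - d_2\Phi(a)$ to precisely the same expression, so $\Phi(d_1 a) = d_2 \Phi(a) + [\omega, \Phi(a)]$ follows.

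For the curving condition $\Phi(c_1) = c_2 + d_2(\omega) + \omega^2$, observe that $\omega^2 = \omega * \omega = 0$, because $\omega$ has form-degree one and its convolution square is supported on units where it equals $\omega \wedge \omega$; that $d_2(\omega) = \dr\omega$ as a multiplier, by the defining formula $(d_2\omega)a = d_2(\omega a) + \omega\,(d_2 a)$ for an odd-degree multiplier; and that $\Phi$ acts as the identity on multipliers supported on $\Gc_0$, since the normalization $\alpha|_{\Gc_0} \equiv 1$ forces $\Phi(B') = B'$. The condition thus collapses to $B' = B + \dr\omega$, the third cocycle relation. The Dolbeault case is treated identically, with $\dbar$ replacing $\dr$ throughout.

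There is no conceptual obstacle in the construction: the $C^{1,0}$-part $\alpha$ of the Deligne cohomology gauge-transforms the groupoid cocycle $\sigma$ inside the twisted convolution product, while the $C^{0,1}$-part $\omega$ supplies exactly the multiplier data required by Definition~\ref{D:curveddgastuff} to match the adjustment of $\theta$ and $B$. The only calculation needing care is the sign bookkeeping for the graded commutator $[\omega,\cdot]$, since $\omega$ is an odd-degree multiplier acting in a graded convolution algebra; with that handled, the three cohomology identities line up precisely with the three conditions for a curved dga isomorphism.
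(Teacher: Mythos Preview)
Your proposal is correct and follows essentially the same approach as the paper: both define the algebra isomorphism as pointwise multiplication by the $C^{1,0}$-component $\alpha$ of the $1$-cochain, take the $C^{0,1}$-component (your $\omega$, the paper's $\beta$) as the degree-one multiplier in the morphism of curved dga's, and verify the three conditions of Definition~\ref{D:curveddgastuff} against the three components of the cohomology relation. The paper's computation is slightly more explicit in writing out the convolution sums, while you give a bit more justification for the curving step (noting $\alpha|_{\Gc_0}\equiv 1$, $\omega^2=0$, and $d_2\omega=\dr\omega$), but the structure and content of the two arguments are the same.
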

\begin{proof}  We will prove the smooth version, the proof for the complex version is identical.  Choose a \v{C}ech-Deligne 1-cocycle $(\alpha,\beta)\in C^1(\Gc;\Deli)$ such that \[(\sigma',\theta',B')=(\sigma,\theta,B)+D(\alpha,\beta)
   \equiv(\delta\alpha\sigma,\dlog\alpha-\delta\beta+\theta,\dr\beta+B)\]
We ll show that the map
\[ M_\alpha :\Ac^\bullet_1\to \Ac^\bullet_2,\quad M_\alpha(f)(\gamma):=\alpha(\gamma)f(\gamma) \]
is an isomorphism of graded algebras and that the pair $(M_\alpha ,\beta)$ is a morphism of curved dga's.

First, multiplication by $\alpha$ is an isomorphism of bornological vector spaces since it takes values in $U(1)$.  Next we verify that $M_\alpha$ intertwines the multiplications:
\begin{align*}
M_\alpha (f*_1g)(\gamma)
&=\alpha(\gamma)\sum_{\eta\tau=\gamma}\delta\alpha\sigma(\eta,\tau)f(\eta)\wedge g(\tau) \\
&=\sum_{\eta\tau=\gamma}\alpha(\eta\tau)\delta\alpha(\eta,\tau)\sigma(\eta,\tau)f(\eta)\wedge g(\tau)\\
&=\sum_{\eta\tau=\gamma}\alpha(\eta)\alpha(\tau)\sigma(\eta,\tau)f(\eta)\wedge g(\tau)\\
&=\sum_{\eta\tau=\gamma}\sigma(\eta,\tau)\alpha(\eta)f(\eta)\wedge\alpha(\tau)g(\tau)\\
&=(M_\alpha(f)*_2M_\alpha(g))(\gamma)
\end{align*}
Where we use in the third line that $\alpha(\eta\tau)\delta\alpha(\eta,\tau)=\alpha(\eta)\alpha(\tau)$. \\

Now we check compatibility with differentials:
\begin{align*}
M_\alpha(d_1f)(\gamma)
=\alpha(\gamma)\{\dr f(\gamma) + \dlog(\alpha)(\gamma)\wedge
f(\gamma)+\theta(\gamma)f(\gamma)-\delta\beta(\gamma)\wedge
f(\gamma)\}
\end{align*}
Expanding and using $\dlog\alpha=\alpha^{-1}\dr\alpha$ and the Leibnitz
rule for $\dr$, this becomes
\begin{align*}
M_\alpha(d_1f)(\gamma)
&=\dr(\alpha f)(\gamma)+\theta(\gamma)\wedge\alpha f(\gamma)-\delta\beta(\gamma)\wedge f(\gamma) \\
&=d_2(\alpha f)(\gamma)-\delta\beta(\gamma)\wedge \alpha f(\gamma) \\
&=d_2(\alpha f)(\gamma)-(\beta(s\gamma)-(\beta(r\gamma))\wedge \alpha f(\gamma)
\end{align*}
and remembering that $\beta$ vanishes off of units so
$(\beta(r\gamma))\wedge f(\gamma)=(\beta *f)(\gamma)$, gives
\begin{align*}
&=d_2(\alpha f)(\gamma)+\beta *\alpha f(\gamma)-(-1)^f\alpha f*\beta \\
&=d_2(M_\alpha f)(\gamma)+[\beta ,M_\alpha f](\gamma)
\end{align*}
as desired.\\
The facts that $\alpha\equiv 1$ on units, $\theta$ vanishes on units, and $\beta$ vanishes \textit{off} of units yield $\theta\wedge\beta=0$, $\beta\wedge\beta=0$ and finally
\[
M_\alpha(h+\dr\beta)(\gamma)=h(\gamma)+\dr\beta(\gamma)=h(\gamma)+\dr\beta(\gamma)+\theta\wedge\beta+\beta^2
\]
which is the second condition for a morphism of curved dga's.
\end{proof}

\subsection{The Morita equivalence from change of gerbe presentation}\label{S:change of presentation}
\ \newline

Now we want a way to compare twisted Dolbeault or de Rham dga's presented over different but Morita equivalent groupoids.  The need for some sort of comparison is clear because Morita equivalent groupoids essentially determine the same stack, and one should be able to say when two gerbes with connection over the same stack are equivalent.  For notational convenience the constructions in this section are made for \'etale groupoids and the associated twisted de Rham dga's, but they are also valid for complex \'etale groupoids and the associated twisted Dolbeault dga's.

The comparison works as follows.  A groupoid Morita equivalence bimodule relating two groupoids $\Gc$ and $\Hc$ gives rise to a canonical bi-simplicial manifold which maps to the simplicial manifolds $B\Gc_\bullet$ and $B\Hc_\bullet$.  This induces maps from the groupoid cohomology of $\Gc$ and $\Hc$ to some cohomology groups associated to the bi-simplicial manifold, and thus gives a way to compare the cohomology of $\Gc$ with that of $\Hc$.  Now twisted de Rham and Dolbeault dga's are given by certain 2-cocycles, and we will show that when a pair of such 2-cocycles on Morita equivalent groupoids have cohomologous images under these maps, the associated twisted de Rham or Deligne dga's are Morita equivalent as curved dga's.

To get started, let us fix some terminology regarding Morita equivalence of groupoids.  We will mostly use the notation developed in \cite{C}, which has more detailed descriptions of Morita equivalence as it relates to cohomology.  Let $\Gc\arrows\Gc_0$ and $\Hc\arrows\Hc_0$ be \'etale groupoids.  A \textbf{left $\Gc$-space} is a space over $\Gc_0$, $P\stackrel{\eps}{\to}\Gc_0$, with an action $\Gc\times_{\Gc_0}P\to P$.  A \textbf{left $\Gc$-bundle} over a \textbf{base} $B$ is a $\Gc$-space $P$ together with a surjective submersion $\pi:P\to B$ which is $\Gc$-invariant in the sense that $\pi(gp)=\pi(p)$.  The $\Gc$-bundle $P$ is called \textbf{principal} if the map
\[ \Gc\times_sP\lra P\times_\pi P;\quad (g,p)\mapsto(gp,p)  \]
is an isomorphism.  Right $\Hc$-spaces and $\Hc$-bundles and principality are analogous.  A \textbf{$\Gc-\Hc$ Morita equivalence bimodule} (sometimes called a principal bi-bundle) is a space $P$ which is simultaneously a principal left $\Gc$ bundle over the base $X=\Hc^0$ and a principal right $\Hc$-bundle over the base $Y=\Gc_0$.

Write $B\Gc_\bullet$ for the simplicial nerve of $\Gc$.  This is the simplicial manifold whose $k^{th}$ piece is $\Gc_k$.  If $P$ is a Morita equivalence bimodule, then it determines a bi-simplicial manifold
\[BP_{k,l}=\Gc_k\times_{\Gc_0}P\times_{\Hc_0}\Hc_l.\]
The two collections of boundary maps on $BP_{\bullet\bullet}$ come from the boundary maps for $B\Gc_\bullet$ and $B\Hc_\bullet$.  The two obvious augmentations
\[ BP_{k,0}\to B\Gc_k \text{ and } BP_{0,l}\to B\Hc_l  \]
induce morphisms of complexes
\begin{equation}\label{coaugs}
C(\Hc;\Deli)\to C(P;\Deli)\leftarrow C(\Gc;\Deli)
\end{equation}
Here $\Deli$ denotes the Deligne complex (Definition \ref{D:DeligneComplex}):
\[ (\Deli^0\tof{\ddeli}\Deli^1\tof{\ddeli}\Deli^2)=
   (\Uone\tof{\dlog}2\pi\sqrt{-1}\Ac^1\tof{\dr}2\pi\sqrt{-1}\Ac^2),  \]
$C(\Gc;\Deli)$ and $C(\Hc;\Deli)$ are the \v{C}ech-Deligne bi-complexes defined in Section \ref{S:CechDelingeCoho}, that is they are the complexes induced by the differential on $\Deli$ and the simplicial boundary maps of $B\Gc_\bullet$ and $B\Hc_\bullet$.  Then $C(P;\Deli)$ is the tricomplex one gets from the differential on $\Deli$ and the two simplicial boundary maps on $BP_{\bullet\bullet}$.  The three differentials on $C(P;\Deli)$ will be used, so let us write them out explicitly:
\[ C^{klm}=C^{klm}(P;\Deli):=\Gamma(\Gc_k\times_{\Gc_0}P\times_{\Hc_0}\Hc_l;\Deli^m) \]
The three differentials are: $\begin{cases}
\ddeli:C^{klm}\to C^{k,l,m+1} \\
\delta_\Gc\ :C^{klm}\to C^{k+1,l,m}\\
\delta_\Hc\ :C^{klm}\to C^{k,l+1,m} \\
\end{cases}$\newline
where for $f\in C^{klm}$,
\begin{align*}
\quad\delta_\Gc f(g_1,\dots,g_{k+1},p,h's)&:= f(g_2,\dots,g_{k+1},p,h's) \\
& \!\!\!\!\!\!\!+\sum_{i=1\dots k}(-1)^if(\dots,g_ig_{i+1},\dots,p,h's) +(-1)^{k+1}f(g_1,\dots,g_k,g_{k+1}p,h's)\\
\quad\delta_\Hc f(g's,p,h_1,\dots,h_{l+1})&:= f(g's,ph_1,h_2,\dots,h_{l+1}) \\
& \!\!\!\!\!\!\!+\sum_{i=1\dots l}(-1)^if(g's,p,\dots ,h_ih_{i+1},\dots)+(-1)^{l+1}f(g's,p,h_1,\dots,h_l).
\end{align*}
The total differential on $\tot(C(P;\Deli))$ is given by
\[ \DP:=\sum\DP|_{C^{klm}} \text{ where } \DP|_{C^{klm}}:=
   \ddeli +(-1)^m((-1)^l\delta_\Gc+\delta_\Hc).\]
One should remember though, that the minus signs in the formula for $\DP$ means inverse for the $U(1)$-component of the Deligne complex.

Let $\As(\Gc;\omega_{\Gc})$ and $\As(\Hc;\omega_\Hc)$ be twisted de Rham dga's corresponding to \v{C}ech-Deligne 2-cocycles $\omega_\Gc$ and $\omega_\Hc$.  Now suppose the images $\widetilde{\omega_\Gc}$, and $\widetilde{\omega_\Hc}$ under the coaugmentations \eqref{coaugs} are cohomologous, that is, there is some $\psi\in\tot C(P;\Deli)^1$ satisfying $\DP\psi=\widetilde{\omega_\Gc }-\widetilde{\omega_\Hc }$.  Then from this data we can define a twisted $\As(\Gc;\omega_{\Gc})$-$\As(\Hc;\omega_\Hc)$-bimodule which induces a Morita equivalence of curved dga's.
\begin{thm}\label{T:invariance}
Suppose $(\Gc,\omega_\Gc)$ and $(\Hc,\omega_\Hc)$ are a pair of proper \'etale groupoids with Deligne 2-cocycle and we are given a pair of data $(P,\psi)$, where $P$ is a $\Gc-\Hc$-Morita equivalence bimodule and $\psi\in\tot C(P;\Deli)^1$ satisfies $\DP\psi=\widetilde{\omega_\Hc}-\widetilde{\omega_\Gc}$.
Then there is a twisted $\As(\Gc;\omega_{\Gc})$-$\As(\Hc;\omega_\Hc)$-bimodule associated to $(P,\psi)$ which induces a Morita equivalence of curved dga's.
\end{thm}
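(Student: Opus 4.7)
The plan is to build the twisted bimodule by enriching the groupoid Morita equivalence bimodule $C_c^\infty(P)$ of Theorem \ref{T:Morita} with form-valued sections, with the three cochain components of $\psi$ encoding (respectively) the two twisted actions and the connection operator, and then to verify the axioms of Definition \ref{bimod} by unpacking the cocycle condition $\DP \psi = \widetilde{\omega_\Hc} - \widetilde{\omega_\Gc}$ trigrading by trigrading.

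First I would write out the components of $\psi \in \tot C(P;\Deli)^1$, namely
\[ \psi = (\psi_\Gc,\, \psi_\Hc,\, \alpha) \in C^{1,0,0} \oplus C^{0,1,0} \oplus C^{0,0,1}, \]
where $\psi_\Gc : \Gc_1 \times_{\Gc_0} P \to U(1)$, $\psi_\Hc : P \times_{\Hc_0} \Hc_1 \to U(1)$, and $\alpha$ is a complex $1$-form on $P$. Writing $\omega_\Gc = (\sigma_\Gc, \theta_\Gc, B_\Gc)$ and $\omega_\Hc = (\sigma_\Hc, \theta_\Hc, B_\Hc)$, the condition $\DP \psi = \widetilde{\omega_\Hc} - \widetilde{\omega_\Gc}$ splits into six equations, one for each trigrading $(k,l,m)$ of total degree $2$. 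Informally: the $(2,0,0)$ and $(0,2,0)$ equations make $\psi_\Gc$ and $\psi_\Hc$ into simplicial trivializations on $BP_{\bullet\bullet}$ of $\sigma_\Gc$ and $\sigma_\Hc$ respectively; the $(1,1,0)$ equation gives compatibility between $\psi_\Gc$ and $\psi_\Hc$; the mixed $(1,0,1)$ and $(0,1,1)$ equations express $\dlog \psi_\Gc$ and $\dlog \psi_\Hc$ in terms of simplicial coboundaries of $\alpha$ together with $\theta_\Gc$ and $\theta_\Hc$; and the top $(0,0,2)$ equation reads $\dr \alpha = \widetilde{B_\Hc} - \widetilde{B_\Gc}$.

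Next I would put the bimodule on the underlying vector space
\[ X^\bullet := \Gamma_c^\infty(P;\, \pi^*(\wedge^\bullet T^*_{\Gc_0,\Cb})), \]
where $\pi : P \to \Gc_0$ is the $\Gc$-moment map. The left $\As(\Gc;\omega_\Gc)$-action is by $\psi_\Gc$-twisted convolution with wedge product in the fibers, the right $\As(\Hc;\omega_\Hc)$-action is by $\psi_\Hc$-twisted convolution using the local diffeomorphism between $\Gc_0$ and $\Hc_0$ provided by $P$, and the degree-one operator is $d^X \xi := \dr \xi + \alpha \wedge \xi$, all patterned on Definition \ref{D:twisteddeRhamAlg}. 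The bornology is that induced by a smooth family of Banach-algebra norms on the fibers as in the paragraph before Theorem \ref{T:bornoProps}, and essentialness as a right module follows from a partition-of-unity argument on $\Hc_0$ modeled on the proof of Theorem \ref{T:Morita}. The main obstacle will be verification of the twisted bimodule axioms, and this is where each of the six unpacked equations is consumed: associativity of the two actions separately needs the two pure equations; the bimodule compatibility $(a \cdot \xi) \cdot b = a \cdot (\xi \cdot b)$ needs the $(1,1,0)$ equation; the Leibniz-type identity for $d^X$ with respect to left and right multiplication needs the two mixed equations; and the curvature identity $(d^X)^2 \xi = c_\Gc \cdot \xi - \xi \cdot c_\Hc$ needs the top equation. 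Each calculation is structurally identical to the bookkeeping in Proposition \ref{P:twistedDRalg} and in the preceding gauge-transformation proposition, now with the left and right structures intertwined.

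Finally, to obtain a dual bimodule $Y^\bullet$ yielding a Morita equivalence of curved dga's in the sense of Definition \ref{D:hMoritaBimodule}, I would run the symmetric construction on $P$ regarded as an $\Hc$-$\Gc$-bibundle, twisting by the inverse cochain $(\psi_\Hc^{-1}, \psi_\Gc^{-1}, -\alpha)$, which manifestly satisfies the analogous cocycle condition. The required isomorphisms $X \otimes_{\As(\Hc;\omega_\Hc)} Y \cong \As(\Gc;\omega_\Gc)$ and $Y \otimes_{\As(\Gc;\omega_\Gc)} X \cong \As(\Hc;\omega_\Hc)$ reduce at degree zero to the pairings of Theorem \ref{T:Morita}, and the explicit inverses constructed there (using local sections of $\pi$ and a partition of unity on $\Hc_0$) extend verbatim to form-valued sections because those auxiliary functions are supported on the unit spaces, where $\sigma_\Gc$, $\sigma_\Hc$, $\psi_\Gc$, and $\psi_\Hc$ are all trivial.
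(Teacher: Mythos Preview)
Your proposal is correct and follows essentially the same route as the paper: decompose $\psi$ into its three trigraded components, unpack $\DP\psi=\widetilde{\omega_\Hc}-\widetilde{\omega_\Gc}$ into six equations, match each to one of the twisted-bimodule axioms, and reduce the Morita equivalence to the degree-zero statement already handled in Theorem~\ref{T:Morita} via the same partition-of-unity pairings.

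The only substantive difference is a packaging choice. The paper takes the underlying module to be $X=C_c^\infty(P)$ concentrated in degree zero, and then identifies $X\otimes_{\Ac_2}\Ac_2^\bullet$ with the compactly supported differential forms on $P$ (using that $P\to\Hc_0$ is \'etale); all the graded structure lives on the $\Bc^\bullet$-side, exactly as Definition~\ref{bimod} prescribes. You instead build the grading directly into $X^\bullet:=\Gamma_c^\infty(P;\pi^*(\wedge^\bullet T^*_{\Gc_0,\Cb}))$. Since $P$ is \'etale over both $\Gc_0$ and $\Hc_0$, your $X^\bullet$ is canonically isomorphic to the paper's $C_c^\infty(P)\otimes_{\Ac_2}\Ac_2^\bullet$, so nothing is lost; but strictly speaking Definition~\ref{bimod} asks for a graded right $\Bc$-module (with $\Bc=\Bc^0$) and then forms $X^\bullet\otimes_\Bc\Bc^\bullet$, so the paper's degree-zero choice is the more economical fit and makes the reduction of the Morita step to Theorem~\ref{T:Morita} literally a statement about the degree-zero algebras. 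Your construction of an explicit dual bimodule $Y$ from the inverse cochain is fine and amounts to the same reduction; the paper simply phrases it as ``we just need to show $C_c^\infty(P)$ is a $C_c^\infty(\Gc;\sigma_\Gc)$--$C_c^\infty(\Hc;\sigma_\Hc)$ Morita bimodule'' and then invokes the twisted pairing.
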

\begin{cor}
The dgb-categories $\hPa(\Gc,\omega_\Gc)$ and $\hPa(\Hc,\omega_\Hc)$ are quasi-equivalent.
\begin{flushright} $\square$ \end{flushright}
\end{cor}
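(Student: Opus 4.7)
The plan is to deduce the corollary by chaining Theorem \ref{T:invariance} with the main theorem of Section \ref{S:psafinally} (the one stating that an h-Morita equivalence twisted bimodule induces a dgb-quasi-equivalence on h-perfect categories). So the proof is essentially a one-line citation; the only content is to verify that the hypotheses of the latter theorem are met by the bimodule produced by the former.

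First I would invoke Theorem \ref{T:invariance} to obtain a Morita equivalence twisted $(\As(\Gc;\omega_\Gc), \As(\Hc;\omega_\Hc))$-bimodule $X = (X^\bullet, d^X)$ associated to the data $(P,\psi)$. Next I would observe that, by Definition \ref{D:hMoritaBimodule}, every Morita equivalence twisted bimodule is automatically an h-Morita equivalence twisted bimodule: indeed, the isomorphisms $X^\bullet \tens_{\Bc} Y^\bullet \isom \As(\Gc;\omega_\Gc)$ and $Y^\bullet \tens_{\Ac} X^\bullet \isom \As(\Hc;\omega_\Hc)$ required for Morita equivalence are in particular homotopy equivalences (with zero homotopies), so they satisfy the weaker condition requested for h-Morita equivalence. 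Thus $X$ qualifies as an h-Morita equivalence bimodule in the sense needed by the theorem of Section \ref{S:psafinally}.

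Having verified this, I would then apply the theorem in Section \ref{S:psafinally} stating that for any h-Morita equivalence twisted bimodule $X$, the induced functor
\[
X_* : \hPa(\Gc,\omega_\Gc) \lra \hPa(\Hc,\omega_\Hc), \qquad (E^\bullet, d^E) \longmapsto (E^\bullet \tens_{\Ac} X^\bullet, \, d^E \# d^X),
\]
is a dgb-quasi-equivalence. For this step one needs to know that the curved dga's in question satisfy the standing hypotheses (quasi-unital and multiplicatively convex), but this was established in Theorem \ref{T:bornoProps} under the properness assumption on $\Gc$ and $\Hc$ which is part of the hypotheses carried over from Theorem \ref{T:invariance}. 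This yields the claimed quasi-equivalence of dgb-categories.

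There is no real obstacle here; the work has all been done upstream. The only point to double-check is the compatibility of the bornological tensor product $\tens_{\Ac}$ on the perfect categories with the algebraic tensor product structure used to define the bimodule $X$ in Theorem \ref{T:invariance}—but since $\As(\Gc;\omega_\Gc)$ is quasi-unital and $X$ is essential, this is automatic, and the induced functor is well-defined on $\Mod(\As(\Gc;\omega_\Gc))$ and preserves h-nuclearity of the underlying complex by the general argument given in Section \ref{S:psafinally}.
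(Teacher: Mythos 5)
Your proof is correct and takes the same route the paper intends: the corollary is stated with only a \(\square\) precisely because it follows immediately by feeding the Morita equivalence twisted bimodule produced by Theorem \ref{T:invariance} into the theorem of Section \ref{S:psafinally} on h-Morita equivalence twisted bimodules, noting (as you do) that every Morita equivalence twisted bimodule is in particular an h-Morita one. Your additional remark that properness of $\Gc$ and $\Hc$ guarantees the bornological hypotheses via Theorem \ref{T:bornoProps} is a sensible and correct verification of the standing assumptions.
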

\begin{proof}(of Theorem)  By definition, the cochain $\psi$ takes the form of a triple which we write
\[  \psi=(\omega,\alpha_\Gc,(\alpha_\Hc)^{-1})\in C^{010}\times C^{001}\times C^{100} \]
The twisted de Rham dga's at hand are
\[ \As(\Gc;\omega_\Gc)=(\Ac_1^\bullet,d_1,c_1)\text{ and }\As(\Hc;\omega_\Hc)=(\Ac_2^\bullet,d_2,c_2) \]
The $\As(\Gc;\omega_\Gc)$-$\As(\Hc;\omega_\Hc)$-twisted bimodule is $X=X(P,\psi)=(C_c^\infty(P),\nabla)$ (here $C_c^\infty(P)$ is a complex supported in degree zero) with the actions and $\nabla$ defined as follows:\newline
For $p\in P, \xi\in C_c^\infty(P), h\in \Hc, g\in \Gc, a's\in\Ac^\bullet_1, f\in\Ac_2,$ and $b's\in\Ac^\bullet_2$, set
\begin{align}
    &\nabla\xi(p)=\dr\xi(p)+\omega(p)\xi(p)\in C_c^\infty(P)\otimes_{\Ac_2}\Ac^\bullet_2 \\
    &\xi\cdot f(p)=\sum\xi(ph^{-1})\alpha_\Hc(ph^{-1},h)f(h)\in C_c^\infty(P) \label{E:1}\\
    &a\cdot\xi(p)=\sum a(g)\alpha_\Gc(g,g^{-1}p)\xi(g^{-1}p)\in C_c^\infty(P)\otimes_{\Ac_2}\Ac^\bullet_2 \label{E:2}
\end{align}
In these formulas we are tacitly identifying differential forms on $P$ with elements of $C_c^\infty(P)\otimes_{\Ac_2}\Ac^\bullet_2.$
Now we want to compare $\omega_\Gc$ with $\omega_\Hc$ in $C(P;\Deli)$.
So write $\omega_\Gc=(\sigma_\Gc,\theta_\Gc,B_\Gc)\in C^2(\Gc;\Deli)$ and $\omega_\Hc=(\sigma_\Hc,\theta_\Gc,B_\Hc)\in C^2(\Hc;\Deli)$.  Then by definition,
\[
\widetilde{\omega_\Gc}-\widetilde{\omega_\Hc}=(\ \widetilde{\sigma_\Gc},\ \widetilde{\sigma_\Hc}^{-1}\!,\ \widetilde{\theta_\Gc},-\ \widetilde{\theta_\Hc},\ \widetilde{B_\Gc}-\widetilde{B_\Hc},1)\]
\[\hspace{1.5in}\in\! D^{200}\!\!\times\! D^{020}\!\!\times\! D^{101}\!\!\times\! D^{011}\!\!\times\! D^{002}\!\!\times D^{110}\]
while by definition of the differential on $\tot(C(P;\Deli)$,
\[ \DP\psi=(\ \delta_\Gc\alpha_\Gc,\ \delta_\Hc\alpha_\Hc^{-1},
   \ \dlog\alpha_\Gc-\delta_\Gc\omega,\ \dlog(\alpha_\Hc^{-1})-\delta_\Hc\omega,\ \dr\omega,\ \delta_\Hc\alpha_\Gc(\delta_\Gc\alpha_\Hc^{-1})^{-1}) \]

Then the equation $\DP\psi=\widetilde{\omega_\Gc}-\widetilde{\omega_\Hc}$ breaks up into six equations that conspire to make the properties of a twisted bimodule hold for $X(P,\psi)$.  In the left column we put the equation, and in the right column we put the resulting bimodule property (from Definition \ref{bimod}) that is enforced.
\begin{align*}
&\delta_\Hc\alpha_\Hc=\widetilde{\sigma_\Hc}
&\iff& (\xi\cdot b_1)\cdot b_2=\xi\cdot(b_1b_2) \\
&\delta_\Gc\alpha_\Gc=\widetilde{\sigma_\Gc}
&\iff& (a_1a_2)\cdot\xi=a_1\cdot(a_2\cdot\xi) \\
&\dr\alpha_\Hc=\alpha_\Hc(\widetilde{\theta^\Hc}-\delta_\Hc\omega)
&\iff& \nabla(\xi\otimes b)=\nabla\xi*\cdot b+\xi\otimes d_2b          \\
&\dr\alpha_\Gc=\alpha_\Gc(\widetilde{\theta_\Gc}+\delta_\Gc\omega)
&\iff& \nabla(a\cdot\xi)=(d_1a)\cdot\xi+(-1)^aa\cdot\nabla\xi        \\
&\delta_\Hc\alpha^\Gc\delta_\Gc\alpha^\Hc=1
&\iff& a\cdot(\xi\cdot b)=(a\cdot\xi)\cdot b\\
&\dr\omega=\widetilde{B^\Gc}-\widetilde{B^\Hc}
&\iff& \nabla\circ\nabla(\xi\otimes a)=B_\Gc\cdot(\xi\otimes a)-(\xi\otimes a)\cdot B_\Hc
\end{align*}
 To see that this twisted bimodule is a Morita equivalence, we just need to show that $C_c^\infty(P)$ is a  $C_c^\infty(\Gc;\sigma_\Gc)$-$C_c^\infty(\Hc;\sigma_\Hc)$-Morita equivalence bimodule.  The bimodule structure is given by Equation \eqref{E:1} and Equation \eqref{E:2} restricted to $a\in\Ac_2^0=C_c^\infty(\Hc;\sigma_\Hc)$.  The isomorphism
\begin{equation}\label{E:3} C_c^\infty(P)\tens_{C_c^\infty(\Gc;\sigma_\Gc)}C_c^\infty(P)\To C_c^\infty(\Hc) \end{equation}
is induced from the pairing
\[ C_c^\infty(P)\times C_c^\infty(P)\To C_c^\infty(\Hc);\qquad
   \langle\xi,\eta\rangle_{\!\Hc}(h):=\sum_{p}\overline{\xi(p)\alpha_\Hc(p,h)}\eta(ph). \]
There is an analogous pairing $ _{\Gc\!}\langle\xi,\eta\rangle(g):=
\sum_{p}\xi(gp)\overline{\alpha_\Gc(g,p)\eta(p)}$.  Now the proof that \ref{E:3} is an isomorphism is identical to the proof of Theorem \ref{T:Morita} (which was the case $\sigma_\Gc=\sigma_\Hc=1$) if we substitute in the modified bimodule structures and modified pairings.  In particular, the approximate identities that are used in the proof work without modification since they are supported on units and our modified structures reduce to the old ones on units.
\end{proof}

\subsection{Duality}\label{S:duality}
\ \newline

Let $\As$ and $\Bs$ be bornological curved dga's.  Then the existence of an h-Morita equivalence twisted bimodule (which will induce a quasi-equivalence of h-perfect categories $\hPa\simeq\hPb$) can be taken as a \emph{definition} of duality between $\As$ and $\Bs$.

Note that this type of equivalence is strictly weaker than the equivalence from change of presentation of a gerbe with connection, which is implemented by a Morita equivalence twisted bimodule (without the ``h'').
\subsection{A remark }\label{S:stacks}
\ \newline

The content of Theorem \ref{T:invariance} is that a gerbe with connection is a concept native to \'etale stacks, that is, to Morita equivalence classes of \'etale groupoids.  Indeed, the bimodule $P$ is an equivalence of groupoids and we saw that when a pair of 2-cocycles are cohomologous in the complex associated to $P$ they are different presentations of the same gerbe with connection.

If the manifolds $\Gc_k$ and $\Hc_k\ k=0,1,2...$ are acyclic for each sheaf that appears in the 2-truncated Deligne complex (that is the sheaves $U(1),$ $\A^1$, and $\A^2$), then the inclusions of complexes \ref{coaugs} are quasi-isomorphisms and the groupoid cohomology of either groupoid is the stack cohomology as in \cite{Beh}, and the 2-cocycles are representatives of the same cohomology class on the stack $B\Gc\simeq B\Hc$.  In any case, \v{C}ech-groupoid cohomology maps to stack cohomology, so at worst there may be pairs of groupoid presentations of gerbes with connection which do not appear to be Morita equivalent but for which the associated (stack theoretic) gerbes with connection are equivalent.

So for us good groupoids are those for which the manifolds $\Gc_k,\ k=0,1.2...$ are acyclic for the components of the Deligne complex, and it is for such groupoids that a gerbe with connection is really determined by a 2-cocycle in stack cohomology (with coefficients in the Deligne complex).  The question of whether every \'etale groupoid admits a good refinement is still open to us, but every \v{C}ech groupoid does since every such groupoid can be refined to a \v{C}ech groupoid on a good cover, and for \v{C}ech groupoids on good covers $\Gc_k$ is a contractible manifold for each $k$, and contractible manifolds are acyclic for any sheaf of abelian groups (see, for example, \cite{KS}).

\section{T-duality between gerbes and noncommutative tori}\label{S:torus}

In this section we will prove the T-duality between a flat gerbe on a holomorphic torus and a holomorphic noncommutative dual torus.  The first step will be to prove that every gerbe with flat connection on a manifold can be presented by a commutative curved dga.  After that is done we will recall the construction from \cite{Bl2} of the curved dga associated to a holomorphic noncommutative torus, and apply a previous result to quickly finish the T-duality theorem.

\begin{thm}\label{T:flatGerbes} Let $\Gc\arrows\Gc_0$ be a proper \'etale groupoid which is Morita equivalent to a manifold $X\arrows X$.  Suppose $[(\sigma,\theta,B)]\in H^2(\Gc;\Deli)$ satisfies $\dr B=0$.  Then:
\begin{enumerate}
\item The twisted de Rham dga $\As(\Gc,(\sigma,\theta,B))$ is Morita equivalent to \newline $\As(X\arrows X,(1,0,B'))$ for some $B'\in{(\Ac_X^2)}_{closed}$.
\item The twisted de Rham dga $\As(\Gc,(\sigma,\theta,B))$ is Morita equivalent to \newline
$\As(\Hc,(\sigma',0,0))$ for some \v{C}ech groupoid $\Hc$ of $X$ and locally constant 2-cocycle $\sigma'$.
\item The analogous statements for complex groupoids and twisted Dolbeault dga's are also true.
\end{enumerate}
\end{thm}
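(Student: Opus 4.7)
The plan is to invoke Theorem \ref{T:invariance}, which reduces Morita equivalence of twisted de Rham dga's to producing a 1-cochain $\psi\in\tot C(P;\Deli)^1$ with $\DP\psi=\widetilde{\omega_\Gc}-\widetilde{\omega_\Hc}$. The overall strategy is to reduce $\Gc$ to the \v{C}ech groupoid of a good cover of $X$, and then to simplify the Deligne 2-cocycle by Poincar\'e-type arguments on the contractible pieces.

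For the reduction, since $\Gc$ is proper, \'etale, and Morita equivalent to the manifold $X$, all isotropies are trivial and $\Gc_0\to X$ is an \'etale presentation of $X$. I would choose a good cover $\{U_i\}$ of $X$---finite intersections contractible, refined so that each $U_i$ lifts to $\Gc_0$---and form the \v{C}ech groupoid $\Hc=\coprod U_{ij}\arrows\coprod U_i$. Both $\Gc$ and $X\arrows X$ are groupoid-Morita equivalent to $\Hc$, and since $\Hc$ is good in the sense of Section \ref{S:stacks}, its \v{C}ech-Deligne cohomology computes stack cohomology $\mathbb{H}^2(X;\Deli)$. By Theorem \ref{T:invariance} applied to the bimodule between $\Gc$ and $\Hc$, I may replace $\Gc$ by $\Hc$ throughout, with a cohomologous 2-cocycle still satisfying $\dr B=0$.

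The next step, for part (2), is a diagram chase on $\Hc$: each $B_i$ is closed on the contractible $U_i$, so Poincar\'e gives $B_i=\dr\beta_i$; subtracting the coboundary of $(1,\{\beta_i\})$ yields a cohomologous cocycle $(\sigma,\tilde\theta,0)$. The cocycle conditions now force each $\tilde\theta_{ij}$ closed on the contractible $U_{ij}$, so $\tilde\theta_{ij}=\dr\gamma_{ij}$, and subtracting the coboundary of $(\exp(2\pi i\gamma),0)$ kills $\tilde\theta$ and leaves a cocycle $(\sigma',0,0)$ with $\dlog\sigma'=0$, i.e., $\sigma'$ locally constant. Theorem \ref{T:invariance} then yields the Morita equivalence. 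For part (1), I would use the Morita equivalence between $\Hc$ and $X\arrows X$ via the bi-bundle $P=\coprod U_i$ (principal left $\Hc$-bundle via the identity to $\Gc_0$, principal right $X$-bundle via the cover map) and construct a global closed form $B'\in\Ac^2(X)$ together with an explicit $\psi\in\tot C(P;\Deli)^1$ realizing the cohomological relation. The standard \v{C}ech-de Rham/Mayer-Vietoris globalization procedure (using a partition of unity subordinate to $\{U_i\}$ to assemble the local data $\beta_i$ and $\gamma_{ij}$ of the previous step) produces both $B'$ and $\psi$, with $\dr B=0$ being precisely what ensures $B'$ can be chosen closed.

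Part (3) then follows by the same argument with $\dr$ replaced by $\dbar$, $\Deli$ replaced by $\Dol$, the Poincar\'e lemma replaced by the Dolbeault--Grothendieck lemma on polydisks, and good covers chosen by polydisks; the hypothesis $\dbar B=0$ plays the role of $\dr B=0$. The hard part will be the explicit construction of $\psi$ in part (1): this requires tracking the partition-of-unity globalization compatibly with all three differentials $\ddeli$, $\delta_\Gc$, $\delta_\Hc$ of the tri-complex while respecting the Deligne truncation. Conceptually this is the classical \v{C}ech--de Rham globalization lifted to the tri-complex of Section \ref{S:change of presentation}, but the bookkeeping is nontrivial.
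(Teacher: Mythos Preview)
Your proposal is correct and follows essentially the same route as the paper: reduce $\Gc$ to the \v{C}ech groupoid $\Hc$ of a good cover of $X$ via Theorem~\ref{T:invariance}, then run the tic-tac-toe argument on the \v{C}ech--Deligne double complex to normalize the cocycle. Your treatment of part~(2) and part~(3) matches the paper exactly (modulo one triviality: when two cocycles on the \emph{same} groupoid $\Hc$ are cohomologous, the paper invokes the gauge-transformation isomorphism of Section~7.1 rather than Theorem~\ref{T:invariance}, but this is just the identity-bimodule special case).

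The one place you diverge is part~(1), and there the paper's argument is noticeably simpler than the one you anticipate. You plan to globalize from $(\sigma',0,0)$ on $\Hc$ to a closed $B'$ on $X$ using a partition of unity, and you correctly flag the tri-complex bookkeeping for $\psi$ as the ``hard part''. The paper avoids this entirely by running the tic-tac-toe in the \emph{other} direction: starting from the cocycle on $\Hc$, it produces a cohomologous cocycle of the form $(1,0,B'')$ on $\Hc$. The cocycle condition then forces $\dcech B''=0$, so $B''$ is the pullback $q^*B'$ of a global closed $2$-form on $X$ via $q:\Hc_0\to X$. With $P=\Hc_0$ as the $\Hc$--$(X\arrows X)$ bimodule, the equation $q^*B'=B''$ already says the two cocycles have equal images in $C(P;\Deli)$, so $\psi=0$ works and Theorem~\ref{T:invariance} applies immediately. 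Your globalization would also succeed, but the extra work is unnecessary.
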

\begin{proof}
This proceeds in three steps.  First, let $\{U_i\}$ be a locally finite good cover of $\Gc_0$ which is also a locally finite good cover of $X$ (this makes sense because the quotient map $\Gc_0\to\Gc_0/\Gc_1\simeq X$ is \'etale).  Let $\Hc$ be the refinement of $\Gc$ induced by this cover, that is
\[ \Hc:=\ \coprod_{\langle i,j\rangle}(s^{-1}U_i\cap t^{-1}U_j)\arrows\coprod_i U_i \]
where $s$ and $t$ are the source and target of $\Gc$.  Gluing together the cover induces an obvious morphism $\phi:\Hc\to\Gc$, which is in fact an essential Morita equivalence.  Thus $\phi$ induces a map in cohomology and it is easy to verify (using Theorem \ref{T:invariance}) that $\As(\Gc,(\sigma,\theta,B)$ and $\As(\Hc,\phi^*(\sigma,\theta,B))$ are Morita equivalent curved dga's.

Since $\Hc_0$ is a cover of $X$, $\Hc$ is a \v{C}ech groupoid and its groupoid cohomology is the same as \v{C}ech cohomology of the cover of $X$.  Since the cover is good (i.e. all intersections are contractible) and $\dr B=0$, the usual tic-tac-toe argument can be used to show that $\phi^*(\sigma,\theta,B)\in Z^2(\Hc;\Deli)$ is cohomologous to a cocycle of the form $(\sigma',0,0)$.  Thus $\As(\Hc,\phi^*(\sigma,\theta,B))$ and $\As(\Hc,(\sigma',0,0))$ are isomorphic curved dga's and the second statement is proved.

The tic-tac-toe argument also can be used to show that $\phi^*(\sigma,\theta,B)$ is cohomologous to a cocycle and of the form $(1,0,B'')$, so that $\As(\Hc,\phi^*(\sigma,\theta,B))$ and $\As(\Hc,(1,0,B''))$ are isomorphic curved dga's.

Since the 2-form $B''$ on $\Hc_0$ is $\delta$-closed it is a pullback $q^*B'$ via the projection $q:\Hc_0\To\Hc_0/\Hc_1=X$ of some closed differential 2-form on $X$.  Now let $P$ be a Morita equivalence bimodule between $\Hc$ and $X\arrows X$.  Then necessarily $P=\Hc_0$, and the equation $q^*B'=B''$ implies that the images of $(1,0,B'')\in Z^2(\Hc;\Deli)$ and $(1,0,B')\in Z^2(X\arrows X;\Deli)$ in the double complex $C^{\bullet\bullet}(P;\Deli)$ are cohomologous (see Section \ref{S:change of presentation}).  Thus $\As(\Hc,(1,0,B''))$ and $\As(X\arrows X,(1,0,B'))$ are Morita equivalent as curved dga's.

Then we have a chain of equivalences
\[ \As(\Gc,(\sigma,\theta,B))\sim\As(\Hc,\phi^*(\sigma,\theta,B))
    \sim\As(\Hc,(1,0,B''))\sim\As(X\arrows X,(1,0,B'))\]
which proves the first statement.  The same arguments works in the complex case.
\end{proof}
It is interesting to see what the above theorem says about perfect categories.  Suppose $X$ is a complex manifold and a cohomology class $[(\sigma,\theta,B)]\in H^2(X;\Dol)$ is given.  Here $\Dol$ denotes the 2-truncated  Dolbeault complex.  We can present $(\sigma,\theta,B)$ on a good cover of $X$, with associated \v{C}ech groupoid $\Gc$.  Then if $\dbar B=0$ there exist $\sigma'$ and $B'$ such that $\As(\Gc; (\sigma',0,0)$ and $\As(X\arrows X;(1,0,B'))$ are Morita equivalent.  This implies that the associated perfect categories are equivalent.  The first category is made of complexes of twisted sheaves on $\Gc_0$ with $\dbar$-flat connections.  In other words they are complexes of holomorphic twisted sheaves.  The second category is made of complexes of $C_c^\infty(X)$-modules with $\dbar$-connection whose curving, $B'$, is a holomorphic $(0,2)$-form on $X$.

Still no Mukai duality has appeared.  Let us proceed to that now.  Let $X=V/\Lambda$ be a complex torus.  So $V$ is a complex $n$-dimensional vector space and $\Lambda$ is a lattice subgroup isomorphic to $\Zb^{2n}$.  We will show that if $[(\sigma,\theta,B)]\in H^2(X;\Dol)$ satisfies $\dbar B=0$, then
$\As(X\arrows X;(\sigma,\theta,B))$ is Mukai dual (in the sense of Section \ref{S:duality}) to a curved dga corresponding to a noncommutative holomorphic dual torus.  We will denote this curved dga $\As(\Lambda\arrows *;\sigma')$, where $\sigma'$ is a $U(1)$-valued 2-cocycle on the groupoid $\Lambda\arrows *$.

We recall the definition from (Section 3 \cite{Bl2}) of this unital curved dga.  Write
\[ \As(\Lambda; \sigma):=(\Adot,\dbar,0) \]
where $\Adot:=\Sc(\Lambda;\sigma,\wedge^\bullet V_{1,0})$ is the $\sigma$-twisted convolution algebra of Schwartz functions on $\Lambda$ with coefficients in the exterior algebra of $V_{1,0}$ (here $V_{1,0}$ denotes the $+i$-eigenspace of the complex structure operator on $V\tens_{\!\Rb}\Cb$).  Thus multiplication in $\Adot$ is given by
\[ f*g(\lambda):=\sum_{\lambda_1+\lambda_2=\lambda}f(\lambda_1)\wedge g(\lambda_2)\sigma(\lambda_1,\lambda_2). \]
The derivation $\dbar$ is given by the formula
\[ \dbar f(\lambda):=2\pi i f(\lambda)p_{1,0}(\lambda)\in\Sc(\Lambda;\wedge^1V_{1,0})\ \text{ for } f\in\Sc(\Lambda). \]
Here $p_{1,0}:V\hookrightarrow V\tens_{\!\Rb}\Cb\to V_{1,0}$ is induced by the complex structure on $V$.
The derivation is extended to $\Adot$ according to the Leibnitz rule.

Note that when $\sigma=1$ Fourier transform is an algebra isomorphism between the degree zero component $\Ac$ and the smooth functions on the dual torus $X^\vee:=\Hom(\Lambda; U(1))=\overline{V}^\vee/\Lambda^\perp$.  In fact, for the $\sigma=1$ case this extends to an isomorphism $\Adot\simeq\Gamma^\infty(X^\vee;\wedge^\bullet T_{X^\vee}^{0,1}$) which takes $\dbar$ to the usual  differential on the Dolbeault algebra of $X^\vee$.

We assume $\Sc(\Lambda)$ is equipped with the precompact bornology associated to its usual Fr\'echet structure. Thus $\Sc(\lambda)\subset C^*(\Lambda)$ is the domain of the iterated applications of the Fourier transform of ``$d/dx_i$'' on $C(X^\vee)$, so it follows from Puschnigg's derivation lemma (\cite{Pus}) and Lemma \ref{L:LFborno2} that $\Sc(\Lambda)$ is a multiplicatively convex bornological algebra.  In fact these derivations are derivational for $\sigma$-twisted multiplication, so $\Sc(\Lambda;\sigma)\subset C^*_{red}(\Lambda;\sigma)$ is multiplicatively convex by the same reasoning.  Thus $\Adot=\Sc(\Lambda;\sigma)\tens\wedge^\bullet V_{1,0}$ has a multiplicatively convex bornology as well, extended from the degree zero component by using the fine bornology on $\wedge^\bullet V_{1,0}$.

Now let $B\in (A^{0,2}_X)_{const}\simeq\wedge^2V^{0,1}$ be a constant $(0,2)$-form.  The usual isomorphism $V^{0,1}\to V$ takes $B$ to $\Rs(B):=B+\overline{B}\in\wedge^2V^\vee$.  By restriction to $\Lambda\subset V$, $\Rs(B)$ may be viewed as a group 2-cocycle $\Rs(B):\wedge^2\Lambda\to\Rb$.  Define a $U(1)$-valued cocycle by
\[ \sigma_B:\wedge^2\Lambda\to U(1)\qquad\quad
    \sigma_B(\lambda_1,\lambda_2):=e^{2\pi i\Rs(B)(\lambda_1,\lambda_2)}. \]

\begin{thm}\label{T:BlocksDuality}\textup{(\cite{Bl2} Theorem 3.6)} Let $X=V/\Lambda$ be a complex torus with associated dual torus $X^\vee$.  Then given $B\in(\Gamma(X;\wedge^2T_X^{0,1}))_{const}$ and $\sigma_B\in Z^2(\Lambda;U(1))$ as in the previous paragraph, there is an h-Morita equivalence
\[ \As(X\arrows X;(1,\ 0,\ 2\pi i B))\ \sim\ \As(\Lambda;\sigma_B). \]
\end{thm}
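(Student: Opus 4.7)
The plan is to construct an explicit h-Morita twisted bimodule modeled on the Poincar\'e line bundle of classical Fourier-Mukai duality. The underlying graded space is $P^\bullet := \Sc(V)\tens \wedge^\bullet V_{1,0}$, where $\Sc(V)$ is the Schwartz space on the universal cover $V$, equipped with its natural bornology. The left action of $\Ac^\bullet_1 = \Gamma^\infty(X; \wedge^\bullet T_X^{0,1})$ is defined via pullback along $V \to V/\Lambda = X$ followed by pointwise multiplication and wedge product. The right action of $\Sc(\Lambda; \sigma_B)$ is by $\sigma_B$-twisted translation, $\xi \cdot a(v) := \sum_{\lambda} \xi(v-\lambda) a(\lambda) \alpha(v,\lambda)$, where $\alpha$ is a $U(1)$-valued cochain on $V \times \Lambda$ chosen so that $\delta_\Lambda \alpha = \sigma_B$ (this exists because $V$ is contractible, so any cocycle is a coboundary on $V$).

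The $\Zb$-connection $\nabla = \dbar_V + \theta \wedge$ on $P^\bullet\tens_{\Sc(\Lambda;\sigma_B)}\Sc(\Lambda;\sigma_B,\wedge^\bullet V_{1,0})$ should use a $(0,1)$-form $\theta$ on $V$ characterized by $\dbar_V\theta = 2\pi i B$; since $B$ is a constant $(0,2)$-form on the contractible space $V$, a linear primitive exists, namely $\theta_v := 2\pi i \iota_v B$ up to a choice. The point of this choice is twofold: (i) the curvature equation $\nabla^2 = 2\pi i B$ acting from the left matches the curving of $\As(X\arrows X;(1,0,2\pi i B))$, while the right curving from $\As(\Lambda;\sigma_B)$ is zero (consistent with $\dbar$ and no $\theta$ on the dual side); (ii) the translation action of $\lambda \in \Lambda$ must transform $\theta$ by an exact $(0,1)$-form, and the resulting ambiguity is absorbed into $\alpha$. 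I would then check directly, term by term, that the axioms of Definition \ref{bimod} hold; the six compatibility equations reduce to the defining properties of the cocycle $\sigma_B$ together with $\dbar\theta = 2\pi i B$ and the translation equivariance of $\alpha$.

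For the h-Morita property, I would construct a dual bimodule $Y$ on the same underlying space $\Sc(V)$ with flipped actions and the formally adjoint connection, and then verify the homotopy equivalences $X\tens Y \approx \As(X\arrows X;(1,0,2\pi i B))$ and $Y \tens X \approx \As(\Lambda;\sigma_B)$. The heart of the computation is Poisson summation: pairing two Schwartz functions on $V$ via summation over a fundamental domain recovers $C^\infty(X)$-valued functions on one side and summation over $\Lambda$ recovers $\Sc(\Lambda)$-valued pairings on the other, each twisted by the appropriate cocycle data. Combined with the approximate identities constructed in Section \ref{S:bornoprops} and the multiplicative convexity of $\Sc(\Lambda;\sigma_B)$ (from admissibility via Puschnigg's derivation lemma), this gives the required isomorphisms in the category of essential bornological modules.

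The main obstacle is the last step. A straightforward algebraic pairing does not land inside the compactly supported or Schwartz categories without some care: one must show that the tensor product of two copies of $\Sc(V)$ over $\Sc(\Lambda;\sigma_B)$ really is bornologically isomorphic (and not merely algebraically identifiable after completion) to the smooth convolution algebra on $X$, and likewise on the other side. I expect this to require an explicit contracting homotopy constructed using a partition of unity on $X$ together with a Schwartz cutoff on $V$, analogous to the approximate-identity argument in the proof of Theorem \ref{T:Morita}, but now reflecting the noncompactness of $V$. Once these bornological identifications are verified, the theorem follows by combining Theorem \ref{T:flatGerbes} with this duality to obtain an h-Morita equivalence, and hence by the main theorem a quasi-equivalence of h-perfect categories.
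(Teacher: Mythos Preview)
Your overall approach is the right one and matches the paper's own description: the paper does not prove this theorem but cites it from \cite{Bl2}, remarking only that ``there are explicit h-Morita equivalence bimodules for this equivalence which are based on deformed versions of smooth sections of a Poincar\'e line bundle.'' Your construction of $P^\bullet=\Sc(V)\tens\wedge^\bullet V_{1,0}$ with the pullback left action, the $\sigma_B$-twisted translation right action, and the $\Zb$-connection $\dbar_V+\theta$ with $\theta$ a linear primitive of $2\pi i B$ is precisely this deformed Poincar\'e module, and the Poisson summation strategy for the pairing is the correct mechanism.

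Two points deserve attention. First, your final paragraph drifts off target: you conclude by ``combining Theorem \ref{T:flatGerbes} with this duality,'' but Theorem \ref{T:BlocksDuality} is a self-contained statement about the h-Morita equivalence between $\As(X\arrows X;(1,0,2\pi iB))$ and $\As(\Lambda;\sigma_B)$; Theorem \ref{T:flatGerbes} plays no role in it. You are conflating this theorem with the subsequent T-duality theorem, which is what actually combines the two. Second, your sketch of the h-Morita verification is where the real work lies and where your proposal is thinnest. The identification $\Sc(V)\tens_{\Sc(\Lambda;\sigma_B)}\Sc(V)\approx C^\infty(X)$ is not a straightforward Poisson summation; one must produce genuine homotopy equivalences of $\As$-modules (not merely isomorphisms of underlying spaces), and the twist $\sigma_B$ interacts nontrivially with the connection $\theta$ and the cochain $\alpha$ under these pairings. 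In \cite{Bl2} this is handled by an explicit Koszul-type resolution on the dual side rather than by a single bimodule pairing, so your ``contracting homotopy via partition of unity'' idea, while plausible, would need to be replaced or substantially sharpened to close the argument.
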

It is worth noting that there are explicit h-Morita equivalence bimodules for this equivalence which are based on deformed versions of smooth sections of a Poincar\'e line bundle, so that in the untwisted case this reduces to (a dg-enhancement of) the usual Poincar\'e sheaf that implements T-duality in complex geometry.

Now combining Theorems \ref{T:flatGerbes} and \ref{T:BlocksDuality} gives:
\begin{thm}$($T-duality\thinspace$)$ Let $\Gc$ be a proper complex \'etale groupoid that is Morita equivalent to a complex torus $X=V/\Lambda$, and let $[(\sigma',\theta',B')]\in H^2(\Gc;\Dol)$ satisfy $\dbar B'=0$.  Then there is an h-Morita equivalence
\[  \As(\Gc,(\sigma',\theta',B'))\sim\As(\Lambda;\sigma_B) \]
where $(1,0,2\pi i B)\in Z^2(X\arrows X;\Dol)$ is cohomologous to $(\sigma',\theta',B')\in Z^2(\Gc;\Dol)$ in the sense of Section \ref{S:change of presentation}.
\end{thm}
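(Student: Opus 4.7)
The plan is to string together Theorem \ref{T:flatGerbes}, the gauge-transformation proposition of Section \ref{S:threelevels}, and Theorem \ref{T:BlocksDuality}. First, applying part (3) of Theorem \ref{T:flatGerbes} (the Dolbeault version) to $\Gc$ and the cocycle $(\sigma',\theta',B')$ with $\dbar B' = 0$, I obtain a Morita equivalence of curved dga's
\[ \As(\Gc,(\sigma',\theta',B'))\ \sim\ \As(X\arrows X,(1,0,B'')) \]
for some $\dbar$-closed $(0,2)$-form $B''$ on $X$. By the construction in that theorem, the two cocycles become cohomologous when pulled back into the tricomplex $C^{\bullet\bullet\bullet}(P;\Dol)$ associated to the Morita bimodule $P$ between $\Gc$ and $X\arrows X$, in the sense of Section \ref{S:change of presentation}.

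Next, I exploit the Hodge theory of a complex torus. Since $X = V/\Lambda$, every Dolbeault cohomology class in $H^{0,2}(X)$ admits a unique translation-invariant representative, so there exist a constant form $B \in \wedge^2 V^{0,1}$ and a $(0,1)$-form $\eta \in \A^{0,1}(X)$ with $B'' = 2\pi i B + \dbar \eta$. On the trivial groupoid $X\arrows X$ the \v{C}ech differential vanishes on $0$-cochains, so the Dolbeault-Deligne $1$-cochain $(1, \eta)$ has coboundary $(1, 0, \dbar \eta)$. This exhibits $(1, 0, B'')$ and $(1, 0, 2\pi i B)$ as cohomologous 2-cocycles on $X\arrows X$, and the gauge-transformation proposition then yields an isomorphism of curved dga's
\[ \As(X \arrows X, (1, 0, B''))\ \cong\ \As(X \arrows X, (1, 0, 2\pi i B)). \]

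Finally, Theorem \ref{T:BlocksDuality} supplies the h-Morita equivalence
\[ \As(X \arrows X, (1, 0, 2\pi i B))\ \sim\ \As(\Lambda; \sigma_B), \]
and composing the three equivalences gives the desired duality, since isomorphism and Morita equivalence both imply h-Morita equivalence. The cohomology claim in the theorem statement is then recorded step by step: the first equivalence gives $(\sigma',\theta',B') \sim (1, 0, B'')$ in the tricomplex of the bimodule $P$, and the gauge isomorphism upgrades this to $(\sigma',\theta',B') \sim (1, 0, 2\pi i B)$ in the same tricomplex.

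The main obstacle is a verification rather than a hard step: one must confirm that the constant $B \in \wedge^2 V^{0,1}$ extracted from $B''$ by Hodge theory is precisely the one whose cocycle $\sigma_B = \exp(2\pi i \Rs(B))$ appears on the dual lattice side. Since all three equivalences come equipped with explicit cocycle data (the Morita bimodule from Theorem \ref{T:flatGerbes}, the gauge cochain $(1, \eta)$, and the Poincar\'e-type bimodule of Theorem \ref{T:BlocksDuality}), this amounts to tracking the Deligne class through the tricomplex formalism of Section \ref{S:change of presentation}. The underlying mechanism is essentially classical Fourier-Mukai duality for tori, now enriched by the connection and twisting data.
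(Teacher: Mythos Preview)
Your proposal is correct and follows the same route as the paper, which simply says the theorem is obtained by ``combining Theorems \ref{T:flatGerbes} and \ref{T:BlocksDuality}.'' You have made explicit the one bridging step the paper leaves implicit: Theorem \ref{T:flatGerbes} produces some $\dbar$-closed $B''$, whereas Theorem \ref{T:BlocksDuality} requires a \emph{constant} $(0,2)$-form, and your use of Hodge theory on the torus together with the gauge-transformation proposition is exactly what is needed to pass from one to the other.
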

The left side is a gerbe with $\dbar$-flat connection on a torus and the right side is a noncommutative holomorphic torus, and in the case $B=0$ this reduces to complex T-duality or Fourier-Mukai duality (the two dualities are related by the classical Fourier transform $\Sc(\Lambda)\simeq C^\infty(X^\vee)$).
Consequently, there is a dg-quasiequivalence:
\[ \psa(\text{\small{flat gerbe on torus}})\sim \psa(\text{\small{noncommutative dual torus}}). \]
This induces an equivalence of triangulated categories
\[ \Ho\psa(\text{\small{flat gerbe on torus}})\simeq \Ho\psa(\text{\small{noncommutative dual torus}}) \]
and reduces in the case $B=0$ to an equivalence of triangulated categories
\[ D_{coh}^b(X)\simeq\Ho\psa(\text{\small{trivial gerbe on }}X)\simeq
    \Ho\psa(\text{\small{commutative }}X^\vee)\simeq D_{coh}^b(X^\vee). \]
There are obvious generalizations of this to families of smooth tori, though we have not yet considered cases with singular torus fibers.

\end{document}